\DeclareMathOperator*{\esssup}{\mathrm{ess\,sup}}
\DeclareMathOperator*{\essinf}{\mathrm{ess\,inf}}
\makeatletter\@addtoreset{equation}{section}\makeatother
\newtheorem{theorem}{Theorem}[section]
\newtheorem{corollary}[theorem]{Corollary}
\newtheorem{lemma}[theorem]{Lemma}
\newtheorem{proposition}[theorem]{Proposition}
\newtheorem{assumption}[theorem]{Assumption}
\newtheorem{definition}[theorem]{Definition}
\newtheorem{remark}[theorem]{Remark}
\numberwithin{equation}{section}
\title[Evolution of infinite populations of immigrants]{Evolution of infinite populations of immigrants: micro- and mesoscopic description}
\author{ Yuri  Kozitsky}
\address{Instytut Matematyki, Uniwersytet Marii Curie-Sk{\l}odowskiej, 20-031 Lublin, Poland}
\email{jkozi@hektor.umcs.lublin.pl}
\keywords{Markov evolution, kinetic equation, population, diversity,
stochastic semigroup, correlation function}
\begin{document}

\subjclass{92D25; 92D15; 82C22}%

\begin{abstract}

A model is proposed of an infinite population of entities
immigrating to a noncompact habitat, in which the newcomers are
repelled by the already existing population. The evolution of such a
population is described at micro- and mesoscopic levels. The
microscopic states are probability measures on the corresponding
configuration space. States of populations without interactions are
Poisson measures, fully characterized by their densities. The
evolution of micro-states is Markovian and obtained from the
Kolmogorov equation with the use of correlation functions. The
mesoscopic description is made by a kinetic equation for the
densities. We show that the micro-states are approximated by the
Poissonian states characterized by the densities obtained from the
kinetic equation. Both micro- and mesoscopic descriptions are
performed and their interrelations are analyzed, that includes also
discussing the problem of the appearance of a spatial diversity in
such populations.

\end{abstract}

\maketitle

\section{Introduction}

It is a common viewpoint that the observed spatial diversity of a
complex system appears due to the interplay between environmental
heterogeneities and interactions between its constituents. This
equally refers to systems studied in biology \cite{Kondo}, chemistry
\cite{Pini}, physics and other sciences \cite{Kicheva}. Despite a
substantial progress achieved during the last decades it is still a
challenging problem of applied mathematics to elaborate adequate
models and tools for studying these phenomena, see
\cite{Bellomo,Kicheva,Roth}.

In this work, we introduce and analyze a seemingly simple model in
which point entities arrive at random in a noncompact habitat, for
convenience chosen to be $\mathds{R}^d$, $d\geq 1$. The already
existing entities may repel the newcomers. No other actions -- like
birth, death (departure), jumps -- are taken into account. The
entities can be immigrants, molecules, ions, micro-organisms, etc.
The simplicity of the model allows us to analyze various aspects of
its description at micro- and mesoscopic levels and their
interconnections. At the same time, the model is rich enough to
capture the main peculiarities of the dynamics of such populations,
and some of the aspects of its theory developed here are quite
demanding. The model can also be used as a part of more involved
models, for which our present analysis can be used as a starting
point in their study.

If the arrival intensity $b$ (probability density per time) depends
only on the location $x\in \mathds{R}^d$, at the microscopic level the dynamics of the
population can be described as a spatio-temporal Poisson process,
cf. \cite{Kingman}, for which the probability $P_{t,\Lambda}(N)$ of
having $N$ entities at time $t$ in a compact set $\Lambda \subset
\mathds{R}^d$ is given by the Poisson law\footnote{The described
population gets instantly infinite if $\int_{\mathds{R}^d} b(x) d
x=+\infty$; i.e., if $b$ is not integrable, as it is in the
homogeneous case with constant $b$.}
\begin{equation}
  \label{Poisson}
P_{t,\Lambda}(N) = \frac{1}{N!}\left(t\int_\Lambda b(x) dx\right)^N
\exp\left(-t\int_\Lambda b(x) dx\right).
\end{equation}
In this case, the particle density is $\varrho_t(x) = t b(x)$ and
the only source of the diversity is the $x$-dependence of $b$. As is
typical for infinite systems of this kind, their microscopic
description turns into a hard mathematical problem whenever one
wants to include interactions. A more practical approach is to
describe such systems by using aggregate characteristics, like
particle density. In this case, however, one loses the possibility
to directly include interactions as no individual entities
participate in the description. A usual bypass here -- borrowed from
statistical physics where it is known as the mean-field approach --
is to make the model parameters state-dependent. This yields so
called phenomenological (or heuristic) models, that often have no
microscopic analogies and thus can provide rather superficial
description of the corresponding phenomena. At this level, the
evolution of our model is described by the following equation
\begin{equation}
  \label{Z1}
  \frac{d}{dt} \varrho_t (x) = b(x, \varrho_t) = b(x) \exp\left(
 - \int_{\mathds{R}^d} K(x,y) \varrho_t(y) dy \right),
\end{equation}
where $K(x,y)$ is a positive kernel, and hence the
interaction is repulsive. By setting $K(x,y) =
\phi(x-y) = \phi(y-x)$ one obtains a translation invariant version.
It is possible to show, see Theorem \ref{3tm} below, that this
equation has a unique solution $\varrho_t\in
L^\infty(\mathds{R}^d)$ if $\phi$ is bounded and integrable and $b$
is bounded. In the homogeneous case $b(x) \equiv b_*$, this solution
(with the zero initial condition) is also homogeneous and can be obtained
explicitly. It is
\begin{equation}
  \label{10}
\varrho_t (x) \equiv \frac{1}{\langle \phi \rangle}\log \bigg{(} 1 +
b_* \langle \phi \rangle t  \bigg{)}, \qquad \langle \phi \rangle:=
\int_{\mathds{R}^d} \phi (x ) d x.
\end{equation}
To illustrate that this homogeneity of the solution may be unstable
to arbitrarily small perturbations of the homogeneity of $b$ let us
consider the following version of (\ref{Z1}). Assume that there
exist two (compact) patches, $A$ and $B$, such that $b|_A = b_A$,
$b|_B = b_A$ and $b(x)=0$ whenever $x$ is outside of $A\cup B$.
Assume also that $K(x,y)=K(y,x)=1$ for $x\in A$ and $y\in B$,
$K(x,y) =\alpha\geq 0$ for $x,y\in A$ and for $x,y\in B$, and
$K(x,y)=0$ otherwise. Then consider the problem in (\ref{Z1}) with
the zero initial condition. Clearly, in this case one gets
$\varrho_t(x) =0$ for all $t\geq 0$ and $x$ outside of $A\cup B$.
Thus, one can consider the patches only. The restrictions
$\varrho_t|_A=: \varrho^A_t$ and $\varrho_t|_B=: \varrho^B_t$
satisfy the system of equations
\begin{equation}
  \label{Z2}
  \frac{d}{dt} \varrho^A_t = b_A \exp\left(-\alpha \varrho_t^A - \varrho_t^B\right),
  \qquad  \frac{d}{dt} \varrho^B_t = b_B \exp\left( -
  \varrho_t^A-\alpha \varrho_t^B\right).
\end{equation}
The case of $\alpha < 1$ (resp. $\alpha >1$) corresponds to the
situation in which the repulsion from the entities in the other
patch is stronger (resp. weaker) than that of the entities in the
same patch. The solution of this system clearly exists. However, it
can explicitly be  obtained only in particular cases. In general, one
might integrate (\ref{Z2}) and then analyze the solution. Recalling
that (\ref{Z2}) is subject to the zero initial condition, for
$\alpha\neq 1$ we get
\begin{equation}
  \label{Z3}
\exp\left((\alpha -1) \varrho^A_t \right) - 1 =\frac{b_A}{b_B}
\bigg{(} \exp\left((\alpha -1) \varrho^B_t \right) - 1\bigg{)}
\end{equation}
For $\alpha=1$, after some calculations we obtain an explicit
solution
\begin{equation}
  \label{Z4}
  \varrho^A_t = \frac{b_A}{b_A + b_B} \log\left(1 + (b_A + b_B)t
  \right), \quad  \varrho^B_t = \frac{b_B}{b_A + b_B} \log\left(1 + (b_A + b_B)t
  \right).
\end{equation}
In the latter case, both densities increase with time ad infinitum
logarithmically, and $\varrho^A_t \approx \varrho^B_t$ (at all $t$)
whenever $b_A \approx b_B$. For $\alpha >1$ and $b_A\neq b_B$, by
(\ref{Z3}) we conclude that both $\varrho^A_t$ and $\varrho^B_t$
increase ad infinitum and $\varrho^A_t \approx \varrho^B_t$ whenever
$b_A \approx b_B$. That is, in both these cases no clear difference
between the patches appears if the intensities $b_A$ and $b_B$ are close to each other.
In the homogeneous case $b_A = b_B =
b_*$, for all $\alpha$ we get
\begin{equation*}
  %\label{Z5}
\varrho_t^A = \varrho_t^B = \varrho_t = \frac{1}{1+\alpha}
\log\left(1 + (1+\alpha) b_* t \right),
\end{equation*}
that resembles (\ref{10}). For $\alpha <1$, however, the
equality between the patches in the homogeneous case gets unstable.
Assuming $b_A<b_B$, by
(\ref{Z3}) we conclude that $\varrho_t^B$ should increase ad
infinitum whereas $\varrho_t^A$ tend to $\varrho_\infty^A = [\log
b_B - \log (b_B - b_A) ]/(1-\alpha)$. This effect of increasingly
diverse patches persists regardless how small is $b_B - b_A$. Having
this effect in mind, one might be interesting in developing a more
or less complete theory of (\ref{Z1}), in particular, in realizing
which properties of $b$ and $K$ are responsible for pattern
formation. In part, this is done in Theorem \ref{3tm} below, where
we show the existence of a unique classical solution and derive some
information on its properties. Clearly, in studying (\ref{Z1})
numerical methods can be essentially helpful. We plan to realize
this in a separate work.

As mentioned above, a theory based on kinetic equations like in
(\ref{Z1}) is pretty rough and can only be considered as the first
step in the study of the corresponding phenomena at which spatial
correlations are taken into account in a mean-field like way. To
make the next step one should (in one or another way) obtain the
`true' and `complete' system -- hierarchy -- of equations, that
include correlation functions of second, third, and higher orders.
Then one might think of a decoupling of this hierarchy, see, e.g.,
\cite{Baker,Moment}. A yet more sophisticated issue is whether a
solution of this `true' hierarchy gives correlation functions
corresponding to a microscopic state. If yes, then the evolution
obtained by solving the hierarchy can `represent' -- in a certain
sense -- the evolution of the micro-states and thus yield a
microscopic description of the considered phenomenon. In this work,
for our model -- introduced in Section
  \ref{2S} -- we give partial answers to these questions. This
includes the following:
\begin{itemize}
  \item[(i)] Deriving a hierarchy of evolution equations for correlation
 functions from the Kolmogorov equation describing the microscopic
 evolution of observables (subsection \ref{2.3ss} and Appendix).
\item[(ii)] Proving the existence and uniqueness of solutions of the evolution equation for correlation
functions that yields the evolution $k_0\to k_t$; then proving that
each $k_t$ is the correlation function of a unique micro-state
$\mu_t$ (Theorem \ref{1tm}), that yields the evolution of states
$\mu_0\to \mu_t$.
\item[(iii)] Under an additional condition on the repulsion,
proving (Theorem \ref{2tm}) that the expected number of entities
contained in a compact $\Lambda$ increases in time at most
logarithmically, cf. (\ref{10}).
\item[(iv)] Proving the existence and uniqueness of solutions of
(\ref{Z1}) and describing some of its properties (Theorem
\ref{3tm}).
\item[(v)] Deriving (by a scaling procedure) a mesoscopic evolution
equation -- that coincides with the kinetic equation (\ref{Z1}) -- from
the Kolmogorov equation describing the evolution of micro-states of
the model. Proving that the mesoscopic limit of the state $\mu_t$ is
the Poisson state characterized by the density that solves
(\ref{Z1}) (Theorem \ref{4tm}).
\end{itemize}
The paper is organized as follows. In Section \ref{2S}, we briefly
introduce the mathematical framework in which we then build up the
theory. Then we introduce the model by defining the
generator of its Markov evolution. Thereafter, we discuss in detail
how to describe a weak evolution of states of infinite particle
systems, including the model considered in this work. The main idea
of this is to use correlation functions the evolution of which is
described by an evolution equation derived from the Kolmogorov
equation in Appendix. The latter equation has a hierarchical form
and is considered in an ascending scale of Banach space chosen in
such a way that they contain the correlation functions of the
so-called sub-Poissonian states. To relate the microscopic theory
based on the Kolmogorov equation to the one that uses kinetic
equations we introduce the notion of Poisson approximability of the
micro-states (Definition \ref{Poidf}) based on the passage from the
micro- to the meso-scale. Finally, we formulate our results in
Theorems \ref{1tm} -- \ref{4tm} and Corollary \ref{M1co}. In
subsection \ref{CommentsSS} we comment these results and give
additional information on their meaning and relevance. In Section
\ref{ESs}, we give the proof of Theorems \ref{1tm} and \ref{2tm}.
The proof of the latter theorem is rather simple and based on the
results of Theorem \ref{1tm}. Its proof, however, is the
most involved part of this work. In is based on a
combination of a number of methods of studying evolution equations
in scales of Banach spaces, including the theory of stochastic
semigroups in $AL$-spaces. Section \ref{MESS} contains the proof of
Theorems \ref{3tm} and \ref{4tm} describing the mesoscopic evolution
and its connection to the microscopic evolution. In the concluding
part of the paper we placed Appendix containing technicalities.

\section{The Model and the Results}
\label{2S}

Here we give the description of the model preceded by a short
presentation of the preliminaries (see \cite{KK1,KK2} for more) and
then briefly outline the main aspects of the theory. Thereafter, we
formulate the results.

\subsection{Preliminaries}

The microscopic description of the dynamics of the model which we
introduce and study in this work is conducted as a Markov evolution
of states of an infinite population of point entities dwelling
$\mathds{R}^d$, cf. \cite{FKKK,KK1,KK2}. The phase space of the
population is the set $\Gamma$ of all locally finite subsets
$\gamma\subset \mathds{R}^d$ -- \emph{configurations}, equipped with
a standard (vague) topology and the corresponding Borel
$\sigma$-field $\mathcal{B}(\Gamma)$. This makes $(\Gamma,
\mathcal{B}(\Gamma))$ a standard Borel space that allows one to
consider probability measures on $\Gamma$ as states of the system.
In the sequel, $\mathcal{P}(\Gamma)$ will stand for the set of all
such measures. Along with states $\mu\in\mathcal{P}(\Gamma)$ one
employs \emph{observables} -- appropriate measurable functions
$F:\Gamma \to \mathds{R}$. The mentioned local finiteness means that
the observable $\Gamma \ni \gamma \mapsto |\gamma \cap \Lambda|=:
N_\Lambda (\gamma)$ takes finite values only. Here $\Lambda$ is a
compact subset of $\mathds{R}^d$ and $|\cdot |$ stands for
cardinality. Among the states one can distinguish those
characterized by the corresponding densities -- locally intergrable
functions $\varrho_\mu:\mathds{R}^d \to \mathds{R}_{+}:=[0,+\infty)$
such that
\begin{equation}
  \label{0}
  \mu (N_\Lambda) = \int_{\Lambda} \varrho_\mu (x) d x,
\end{equation}
where $dx$ is Lebesgue's measure and the notation $\mu(F):=\int F d
\mu$ is used for appropriate measures and observables. Note that $
\mu (N_\Lambda)$ is the (expected) number of `particles' contained
in $\Lambda$ in state $\mu$. Each Poissonian measure $\pi_\varrho$,
see \cite{Kingman}, is completely characterized by its density
$\varrho$. In the homogeneous case, $\varrho(x)\equiv \varkappa>0$,
and thus $\pi_\varkappa$ is invariant with respect to the
translations of $\mathds{R}^d$.

The dynamics of a given system can be described as the evolution of
observables by solving the Kolmogorov equation
\begin{equation}
  \label{1}
  \frac{d}{dt} F_t = L F_t, \qquad F_t|_{t=0}=F_0.
\end{equation}
The evolution of states $\mathcal{P}(\Gamma) \ni \mu_0 \to \mu_t \in
\mathcal{P}(\Gamma)$ is then usually obtained (in the weak sense) by
means of the rule
\begin{equation}
  \label{2}
  \mu_t (F_0) = \mu_0 (F_t),
\end{equation}
where the function $t \mapsto F_t$ is obtained from (\ref{1}) with
$F_0$ running through a sufficiently large (measure-defining) set of
observables. As such a set one can take $\mathcal{F}=\{F^\theta :
\theta \in \varTheta\}$, where
\begin{equation}
  \label{3}
  F^\theta (\gamma) = \prod_{x\in \gamma} ( 1 + \theta (x)), \qquad
  \gamma \in \Gamma,
\end{equation}
and $\varTheta$ stands for the set of compactly supported continuous
functions $\theta:\mathds{R}^d \to (-1,0]$.

The model studied in this work is a particular case of a more
general one describing a system of point entities arriving in
(immigrating) and departing from (emigrating) $\mathds{R}^d$. Such
systems are described by the Kolmogorov equations with
\begin{eqnarray}
  \label{4}
 (LF)(\gamma) & = & \int_{\mathds{R}^d} c_{+} (x, \gamma) \left[F(\gamma \cup x) - F(\gamma)
 \right]dx \\[.2cm] \nonumber & + & \sum_{x\in \gamma} c_{-} (x ,\gamma \setminus x) \left[F(\gamma \setminus x) - F(\gamma)
 \right].
\end{eqnarray}
Here and in the sequel, in the expressions like $\gamma\cup x$ we
treat $x$ as a singleton configuration. If the immigration and
emigration rates $c_{\pm}$ are state-independent, the evolution of
states $\mu_0 \to \mu_t$ can be constructed explicitly, see
\cite[Sect. 2.3]{KK2}. In this case: (a) $\varrho_{\mu_t}(x) =
\varrho_{\mu_0}(x) + t c_{+}(x)$ if $c_{-}(x) \equiv 0$; (b)
$\varrho_{\mu_t}(x) \leq \varrho_{\infty}(x)$ for some
$\varrho_{\infty}(x)<\infty$ whenever $c_{-} (x) >0$ at this $x$. By
(\ref{0}) (b) implies
\begin{equation}
  \label{5}
  \mu_t (N_\Lambda) \leq C_{\Lambda},
\end{equation}
holding for all $t>0$ and compact $\Lambda \subset \mathds{R}^d$
such that $c_{-}(x) \geq \sigma_\Lambda >0$ for all $x\in \Lambda$.
In particular, if the condition $c_{-}(x) \geq \sigma >0$ is
satisfied globally (for all $x$), then (\ref{5}) holds also
globally, i.e., for all compact $\Lambda$. The main question studied
in \cite{KK2} was whether a local competition alone can yield the
global boundedness as in (\ref{5}). In this case, $c_{-}$ has the
form
\begin{equation*}
%  \label{6}
  c_{-} (x, \gamma) = \sum_{y\in \gamma} a_{-} (x,y),
\end{equation*}
with an appropriate competition kernel $a_{-}(x,y)\geq 0$. Here,
however, the particles in the system interact with each other and
the proof of the existence of the evolution of states turns into an
essential problem. Nevertheless, in \cite{KK2} the  weak evolution
$\mu_0\to \mu_t$ was constructed for which it was shown that
(\ref{5}) holds for each compact $\Lambda$ whenever there exists a
box $\Delta \subset \mathds{R}^d$ such that its disjoint translates
$\Delta_l$, $l\in \mathds{Z}^d$, cover $\mathds{R}^d$ and $a_{-}
(x,y) \geq a_0>0$ for $x,y$ running through each of $\Delta_l$. In
the present article, we study the particular case of (\ref{4}) in
which emigration is absent, i.e., $c_{-}(x,\gamma) \equiv 0$, and
the immigration term is taken in the form, cf. (\ref{Z1}),
\begin{equation}
  \label{7b}
  c_{+} (x, \gamma) = b(x) \exp \left( - \sum_{y\in \gamma} \phi(x-y)
  \right),
\end{equation}
where $\phi:\mathds{R}^d\to \mathds{R}_{+}$ is the \emph{repulsion}
potential that is assumed to be integrable and bounded. According to
(\ref{7b}) the already existing population decreases the immigration
rate as compared to the free case $\phi(x)\equiv 0$. As mentioned above,
in this free case one has
\begin{equation*}
 % \label{7a}
\mu_t(N_\Lambda) = \mu_0 (N_\Lambda) + t \int_{\Lambda} b(x) dx.
\end{equation*}
Then one of the questions of the theory is how the repulsion contained in (\ref{7b})
can attenuate the growth of $\mu_t(N_\Lambda)$ in time.  According
to the assumed boundedness of $\phi$, which, in particular, excludes
a hard-core repulsion, we have that $c_{+} (x, \gamma)>0$ for all
those $x$ where $b(x)>0$. Thus, one cannot expect that
$\mu_t(N_\Lambda)$ be bounded in time since any kind of emigration
is absent in the model.

\subsection{The model}
In this article, we introduce and study the model described by the
Kolmogorov equation with the following operator, cf. (\ref{4})
 \begin{equation}
   \label{17}
  (L F)(\gamma) = \int_{\mathds{R}^d} b(x) \exp\left(- \sum_{y\in
  \gamma} \phi(x-y)
  \right) \left[ F(\gamma \cup x) - F(\gamma)\right] d x.
 \end{equation}
\begin{assumption}
  \label{ass}
In general, we assume that: (a) $b$ and $\phi$ are nonnegative,
measurable and bounded from above by $\bar{b}$ and $\bar{\phi}$,
respectively; (b) $\phi$ is integrable and its $L^1$-norm is denoted
by $\langle \phi \rangle$, see (\ref{10}). Additionally, we assume:
(c) there exist $r>0$ and $\phi_*>0$ such that
\begin{equation*}
 % \label{18}
  \phi(x) \geq \phi_*, \qquad {\rm whenever} \ |x|\leq r.
\end{equation*}
\end{assumption}
Notably, we do not assume that $b$ is integrable, which means that
we allow the particle system described by (\ref{17}) with such $b$
be instantly infinite, even if it is initially empty.

\subsection{The weak evolution of states}
\label{2.2ss} For a given $\mu\in \mathcal{P}(\Gamma)$, its {\it
Bogoliubov} (generating) functional is $B_\mu (\theta) =
\mu(F^\theta)$ with $F^\theta$ defined in (\ref{3}). It thus is
considered as a map from $\varTheta$ to $\mathds{R}_{+}$. For a
Poisson measure $\pi_\varrho$  characterized by density $\varrho$,
it is
\begin{equation}
  \label{I1}
B_{\pi_\varrho} (\theta) = \exp\left( \int_{\mathbb{R}^d}\varrho(x)
\theta (x) d x \right).
\end{equation}
In our
consideration, Poisson measures play the role of reference states.
In view of this, we restrict our attention to the subset
$\mathcal{P}_{\rm exp}(\Gamma) \subset \mathcal{P}(\Gamma)$
containing all those $\mu\in \mathcal{P}(\Gamma)$ for each of which
$B_\mu$ can be continued, as a function of $\theta$, to an
exponential type entire function on $L^1 (\mathbb{R}^d)$. It can be
shown that a given $\mu$ belongs to $\mathcal{P}_{\rm exp}(\Gamma)$
if and only if its functional $B_\mu$ can be written down in the
form
\begin{eqnarray}
  \label{I3}
B_\mu(\theta) = 1+ \sum_{n=1}^\infty
\frac{1}{n!}\int_{(\mathbb{R}^d)^n} k_\mu^{(n)} (x_1 , \dots , x_n)
\theta (x_1) \cdots \theta (x_n) d x_1 \cdots d x_n,
\end{eqnarray}
where $k_\mu^{(n)}$ is the $n$-th order correlation function of
$\mu$. It is a symmetric element of $L^\infty ((\mathbb{R}^d)^n)$
for which
\begin{equation}
\label{I4}
  \|k^{(n)}_\mu \|_{L^\infty
((\mathbb{R}^d)^n)} \leq \exp( \vartheta n), \qquad n\in
\mathbb{N}_0,
\end{equation}
with a certain $\vartheta \in \mathds{R}$. It is known, see Proposition \ref{Tobipn} below,
that $k^{(n)}_\mu (x_1, \dots , x_n) \geq 0$ for all $n$ and (Lebesgue-) almost all $x_1, \dots , x_n$. Then
(\ref{I4}) can be rewritten in the form
\begin{equation}
 \label{I4a}
 0 \leq k^{(n)} (x_1, \dots , x_n) \leq \exp( \vartheta n), \qquad n\in
\mathbb{N}_0.
\end{equation}
Note that
$k_{\pi_\varkappa}^{(n)} (x_1 , \dots , x_n)= \varkappa^n$. Note also that $k^{(0)}=1$ and $k^{(1)}_\mu$ is the
density of the system in state $\mu$. The set $\Gamma$
contains a subset, $\Gamma^{(0)}$, consisting of a single element -- the empty
configuration. If $\mu$ is such that $\mu(\Gamma^{(0)})=1$, then the system in this state
is empty. Its correlation function $k_{\mu}$ satisfies (\ref{I4}) with any $\vartheta$.
In this case, we will allow $\vartheta$ in (\ref{I4}) and (\ref{I4a}) be $-\infty$.
For $\mu\in \mathcal{P}_{\rm exp}(\Gamma)$ and a compact $\Lambda$, by (\ref{I4a}) and \cite[eq. (4.5)]{KK2} it follows that
\begin{equation}
 \label{I4c}
\forall m\in \mathds{N} \qquad  \mu(N_\Lambda^m) \leq \mu_{\pi_\varkappa} (N_\Lambda^m), \quad \varkappa = e^\vartheta,
\end{equation}
with $\vartheta$ satisfying (\ref{I4a}) for this $\mu$.
That is why the elements of
$\mathcal{P}_{\rm exp}(\Gamma)$ are called sub-Poissonian states. By (\ref{I4c}) one readily gets that
$\mu\in \mathcal{P}_{\rm exp}(\Gamma)$ also satisfies
\begin{equation}
\label{I4d} \forall \alpha >0 \qquad \mu\left(F^\alpha_\Lambda
\right) \leq \exp\left((e^\alpha -1) e^\vartheta |\Lambda| \right):=
C^\alpha_\Lambda (\mu), \quad F^\alpha_\Lambda (\gamma):= e^{\alpha
N_\Lambda(\gamma)},
\end{equation}
where $\vartheta$ is the same as in (\ref{I4c}) and $|\Lambda|$ stands for Lebesgue's measure of $\Lambda$.

Let $\Gamma^{(n)}$, $n\in \mathds{N}_0$, stand for the set of
$n$-point configurations. It is an element of $\mathcal{B}(\Gamma)$.
Then so is $\Gamma_0 = \cup_{n\geq 0} \Gamma^{(n)}$ -- the set of
all finite configurations. In the sequel, we will use also the
measurable space $(\Gamma_0, \mathcal{B}(\Gamma_0))$ where
$\mathcal{B}(\Gamma_0)$ is a sub-field of $\mathcal{B}(\Gamma)$
containing all measurable subsets of $\Gamma_0$. It can be shown,
see \cite{FKKK}, that a function $G:\Gamma_0 \to \mathds{R}$ is
$\mathcal{B}(\Gamma)/\mathcal{B}(\mathds{R} )$-measurable if and
only if, for each $n\in \mathds{N}$, there exists a symmetric Borel
function $G^{(n)}: (\mathds{R}^{d})^{n} \to \mathds{R}$ such that
\begin{equation}
 \label{7}
 G(\eta) = G^{(n)} ( x_1, \dots , x_{n}), \quad {\rm for} \ \eta = \{ x_1, \dots , x_{n}\}.
\end{equation}
\begin{definition}
  \label{Gdef}
A measurable function $G:\Gamma_0 \to \mathds{R}$  is said to have
bounded support if: (a) there exists a compact $\Lambda\subset
\mathds{R}^d$ such that $G(\eta) = 0$ whenever $\eta\cap
(\mathds{R}^d \setminus \Lambda)\neq \varnothing$; (b) there exists
$N\in \mathds{N}_0$ such that $G(\eta)=0$ whenever $|\eta|
>N$.  By $\Lambda(G)$ and $N(G)$ we denote the
smallest $\Lambda$ and $N$ with the properties just mentioned. By
$B_{\rm bs}(\Gamma_0)$ we denote the set of all bounded functions of bounded support.
\end{definition}
In our study, we use the following subset of $B_{\rm bs}(\Gamma_0)$
\begin{equation}
  \label{7z}
B^\star_{\rm bs}(\Gamma_0) = \{G\in B_{\rm bs}(\Gamma_0): (K
G)(\eta) \geq 0\}, \qquad (K G)(\eta) := \sum_{\xi\subset\eta}
G(\xi).
\end{equation}
Note that the cone $B^+_{\rm bs}(\Gamma_0) = \{G\in B_{\rm
bs}(\Gamma_0):  G(\eta) \geq 0\}$ is a proper subset of
$B^\star_{\rm bs}(\Gamma_0)$.

The Lebesgue-Poisson measure $\lambda$ on $(\Gamma_0,
\mathcal{B}(\Gamma_0))$ is defined by the following formula
\begin{eqnarray}
\label{8} \int_{\Gamma_0} G(\eta ) \lambda ( d \eta)  =
G(\varnothing) + \sum_{n=1}^\infty \frac{1}{n! }
\int_{(\mathds{R}^d)^{n}} G^{(n)} ( x_1, \dots , x_{n} ) d x_1
\cdots dx_{n},
\end{eqnarray}
holding for all appropriate $G:\Gamma_0\to \mathds{R}$, that
obviously includes $G\in B_{\rm bs}(\Gamma_0)$. Like in (\ref{7}),
we introduce $k_\mu : \Gamma_0 \to \mathds{R}$ such that
$k_\mu(\eta) = k^{(n)}_\mu (x_1, \dots , x_n)$ for $\eta = \{x_1,
\dots , x_n\}$, $n\in \mathds{N}$. We also set
$k_\mu(\varnothing)=1$. With the help of the measure introduced in
(\ref{8}), the expressions for $B_\mu$ in (\ref{I1}) and (\ref{I3})
can be combined into the following formulas
\begin{eqnarray}
  \label{1fa}
 B_\mu (\theta)& = & \int_{\Gamma_0} k_\mu(\eta) \prod_{x\in \eta} \theta (x) \lambda (d\eta)=: \int_{\Gamma_0} k_\mu(\eta) e(\theta; \eta) \lambda (d \eta)
 \\[.2cm]
 & = &  \int_{\Gamma} \prod_{x\in \gamma} (1+ \theta (x)) \mu (d \gamma) = \int_{\Gamma} F^\theta (\gamma) \mu(d
 \gamma), \nonumber
\end{eqnarray}
cf. (\ref{3}). Thereby, one can transform  the action of $L$ on $F$,
see (\ref{4}), to the action of $L^\Delta$ on $k_\mu$ according to
the rule
\begin{equation}
  \label{1g}
\int_{\Gamma}(L F_\theta) (\gamma) \mu(d \gamma) = \int_{\Gamma_0}
(L^\Delta k_\mu) (\eta) e(\theta;\eta)
 \lambda (d \eta).
\end{equation}
Correspondingly, along with the Kolmogorov equation (\ref{1}) one can consider
\begin{equation}
 \label{15}
 \frac{d}{dt} k_t = L^\Delta k_t , \qquad k_t|_{t=0} = k_{\mu_0},
\end{equation}
where $\mu_0 \in \mathcal{P}_{\rm exp} (\Gamma)$ is the initial state. Then the construction of
the evolution of states $\mu_0 \to \mu_t$ will be realized in the following steps:
\begin{itemize}
 \item[(i)] proving the existence of a unique solution $k_t$ of the Cauchy problem in (\ref{15});
 \item[(ii)] proving that this solution $k_t$ is the correlation function of a unique $\mu_t\in \mathcal{P}_{\rm exp} (\Gamma)$.
\end{itemize}
Upon realizing this program, one will be able to identify $\mu_t$ by
its values on the members of $\mathcal{F}$ -- a measure-defining
class -- computed according to the formula
\begin{equation}
 \label{16}
 \mu_t(F^\theta) = \int_{\Gamma_0} k_t(\eta) e(\theta;\eta) \lambda ( d \eta),
\end{equation}
see (\ref{1fa}). In realizing item (ii), we will use the following
statement, see \cite[Theorems 6.1 and 6.2 and Remark 6.3]{Tobi}.
\begin{proposition}
  \label{Tobipn}
A measurable function $k:\Gamma_0\to \mathds{R}$ is the correlation
function of a unique $\mu\in \mathcal{P}_{\rm exp}(\Gamma_0)$ if and
only if it satisfies: (a) $k(\varnothing)=1$; (b) the estimate in
(\ref{I4}) holds for some $\vartheta\in \mathds{R}$ and all $n\in
\mathds{N}$; (c) for each $G\in B^\star_{\rm bs}(\Gamma_0)$, see
(\ref{7z}), the following holds
\begin{equation}
  \label{19}
  \langle\!\langle G, k \rangle \!\rangle := \int_{\Gamma_0} G(\eta)
  k(\eta) \lambda (d \eta) \geq 0.
\end{equation}
\end{proposition}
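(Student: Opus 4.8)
The plan is to prove the equivalence by treating the two implications separately. Necessity of (a)--(c) is a short computation based on the duality between the $K$-transform and correlation functions; sufficiency is the realizability statement --- that the normalization (a), the sub-Poissonian growth (b) and the positivity (c) already force $k$ to be the correlation function of a (unique) sub-Poissonian state --- and is a version of Lenard's realizability theorem together with a determinacy argument supplied by (b).

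\emph{Necessity.} Let $k=k_\mu$ with $\mu\in\mathcal{P}_{\rm exp}(\Gamma)$. Then (a) is the convention $k_\mu(\varnothing)=1$ and (b) is exactly (\ref{I4}). For (c) I would use the identity
\[
\int_{\Gamma}(KG)(\gamma)\,\mu(d\gamma)=\int_{\Gamma_0}G(\eta)\,k_\mu(\eta)\,\lambda(d\eta),\qquad G\in B_{\rm bs}(\Gamma_0),
\]
where $(KG)(\gamma):=\sum_{\xi\subset\gamma,\, |\xi|<\infty}G(\xi)$; it follows by expanding $(KG)$ and using (\ref{8}) and (\ref{1fa}), both sides converging absolutely by (\ref{I4c}) since $G$ is bounded of bounded support. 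As $G$ vanishes off $\Lambda(G)$, one has $(KG)(\gamma)=(KG)(\gamma\cap\Lambda(G))$ with $\gamma\cap\Lambda(G)\in\Gamma_0$, so $G\in B^\star_{\rm bs}(\Gamma_0)$ --- i.e. $(KG)\ge0$ on $\Gamma_0$ --- yields $(KG)\ge0$ on all of $\Gamma$ and hence $\langle\!\langle G,k\rangle\!\rangle=\int_\Gamma(KG)\,d\mu\ge0$.

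\emph{Sufficiency.} Given $k$ satisfying (a)--(c), I would build $\mu$ as a projective limit over compacts. Fix a compact $\Lambda$, let $\lambda_\Lambda$ be the Lebesgue--Poisson measure on $\Gamma_\Lambda:=\{\gamma:\gamma\subset\Lambda\}$, and define the candidate local density
\[
\rho_\Lambda(\eta):=\int_{\Gamma_\Lambda}(-1)^{|\xi|}\,k(\eta\cup\xi)\,\lambda_\Lambda(d\xi),\qquad\eta\subset\Lambda,
\]
the series for which converges absolutely by (b). A Lebesgue--Poisson Fubini computation gives, for $G\in B^+_{\rm bs}(\Gamma_0)$ supported in $\Gamma_\Lambda$, that $\int_{\Gamma_\Lambda}G\,\rho_\Lambda\,d\lambda_\Lambda=\langle\!\langle H_G,k\rangle\!\rangle$ with $H_G(\omega):=\mathds{1}_{\{\omega\subset\Lambda\}}\sum_{\eta\subset\omega}(-1)^{|\omega\setminus\eta|}G(\eta)$, and one checks $(KH_G)(\eta)=G(\eta\cap\Lambda)\ge0$. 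Were $H_G\in B^\star_{\rm bs}(\Gamma_0)$, (c) would give $\int_{\Gamma_\Lambda}G\,\rho_\Lambda\,d\lambda_\Lambda\ge0$ for all such $G$, hence $\rho_\Lambda\ge0$ $\lambda$-a.e.; the catch is that $H_G$ has bounded \emph{spatial} support but not bounded particle number, so (c) must first be bootstrapped to this class by a truncation-and-limit argument in which (b) dominates the particle-number tails. Granting $\rho_\Lambda\ge0$, the same combinatorial identity gives $\int_{\Gamma_\Lambda}\rho_\Lambda\,d\lambda_\Lambda=k(\varnothing)=1$, so $\mu_\Lambda:=\rho_\Lambda\,\lambda_\Lambda$ is a probability measure on $\Gamma_\Lambda$ whose correlation function equals $k|_{\Gamma_\Lambda}$ (a second use of the inversion identity). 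Since (b) makes the moment problem on the bounded region $\Lambda$ determinate --- the Bogoliubov functional of $\mu_\Lambda$ is entire of exponential type and thus determined by its Taylor coefficients $k|_{\Gamma_\Lambda}$ --- the family $\{\mu_\Lambda\}$ is consistent under the restrictions $\gamma\mapsto\gamma\cap\Lambda$. Taking $\Lambda_n\uparrow\mathds{R}^d$ and applying the Kolmogorov extension theorem to the countable projective family of standard Borel spaces $(\Gamma_{\Lambda_n})_n$ (with $\mathcal{B}(\Gamma)$ generated by the restriction maps) yields $\mu\in\mathcal{P}(\Gamma)$ with $\mu\circ(\,\cdot\cap\Lambda_n)^{-1}=\mu_{\Lambda_n}$; its correlation function is $k$ (each $k^{(n)}$ is recovered from a sufficiently large $\Lambda_n$), and (b) places $\mu$ in $\mathcal{P}_{\rm exp}(\Gamma)$. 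Uniqueness of $\mu$ then follows because (\ref{I3}) shows $k$ determines $B_\mu$ on $\varTheta$, and $\{F^\theta:\theta\in\varTheta\}$ is measure-defining.

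The step I expect to be the main obstacle is the realizability estimate --- passing from the positivity (c) on the cone $B^\star_{\rm bs}(\Gamma_0)$ to the $\lambda$-a.e. nonnegativity of the finite-volume densities $\rho_\Lambda$. The natural test functions $H_G$ one wants to insert into (c) do satisfy $KH_G\ge0$ and have bounded spatial support, yet they violate the bounded-particle-number requirement of Definition \ref{Gdef}(b), so an intermediate lemma is needed that extends (c) to such functions by truncation, with the sub-Poissonian bound (b) providing the dominating integrand. This is also precisely the ingredient that makes the local moment problems determinate and hence guarantees uniqueness, so (b) cannot be dispensed with.
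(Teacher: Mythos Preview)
The paper does not prove this proposition at all: it is quoted verbatim from \cite[Theorems~6.1, 6.2 and Remark~6.3]{Tobi} and used as a black box throughout. Your outline is essentially the argument given in that reference (Lenard-type realizability via the local inversion $\rho_\Lambda(\eta)=\int_{\Gamma_\Lambda}(-1)^{|\xi|}k(\eta\cup\xi)\,\lambda_\Lambda(d\xi)$, positivity from the $K$-duality, consistency, and projective-limit construction), so there is nothing to compare against in the present paper itself.

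Your identification of the obstacle is accurate: the test functions $H_G$ needed to show $\rho_\Lambda\ge0$ lie outside $B_{\rm bs}(\Gamma_0)$ because they lack the bounded-particle-number cutoff, and the passage requires truncating in $|\eta|$ and passing to the limit using the sub-Poissonian bound (b) to dominate tails. This is exactly how \cite{Tobi} proceeds; the growth condition (b) is also what makes the local moment problem determinate and hence secures both consistency of the family $\{\mu_\Lambda\}$ and uniqueness of the limit $\mu$. One small comment: consistency can be obtained slightly more directly than you indicate, since the explicit formula for $\rho_\Lambda$ shows algebraically that $\int_{\Gamma_\Lambda}\rho_{\Lambda'}(\eta\cup\xi)\,\lambda_\Lambda(d\xi)=\rho_{\Lambda'\setminus\Lambda}(\eta)$ for $\Lambda\subset\Lambda'$, without invoking determinacy; but either route works.
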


\subsection{Evolution of correlation functions}\label{2.3ss}

Now we turn to defining the Cauchy problem (\ref{15}) in appropriate
Banach spaces. In view of (\ref{I4}), we set
\begin{equation}
  \label{20}
  \|k\|_\vartheta = \sup_{n\in \mathds{N}_0} \left( \exp\left(
  - \vartheta n\right) \|k^{(n)}\|_{L^\infty((\mathds{R}^d)^n)}\right) =
  \esssup_{\eta \in \Gamma_0}\left( \exp\left(
  - \vartheta |\eta|\right) |k(\eta)|\right),
\end{equation}
and then
\begin{equation*}
  %\label{21}
  \mathcal{K}_\vartheta = \{ k:\Gamma_0 \to \mathds{R}:
  \|k\|_\vartheta < \infty\}, \qquad \vartheta \in \mathds{R}.
\end{equation*}
By (\ref{20})  one clearly gets that
\begin{equation}
  \label{22}
  |k(\eta)| \leq \exp\left( \vartheta |\eta|\right) \|k\|_\vartheta,
  \qquad \eta \in \Gamma_0,
\end{equation}
and
\begin{equation}
  \label{23}
  \mathcal{K}_\vartheta \hookrightarrow \mathcal{K}_{\vartheta'},
  \qquad {\rm for} \ \ \vartheta'> \vartheta,
\end{equation}
that is, $\mathcal{K}_\vartheta$ is continuously embedded in
$\mathcal{K}_{\vartheta'}$. The latter allows one to employ the
whole ascending scale of Banach spaces
$\{\mathcal{K}_\vartheta\}_{\vartheta \in \mathds{R}}$.

In this paper, we extensively use the Minlos lemma, cf. \cite[eq.
(2.2)]{FKKK} and/or \cite[Appendix A]{Tobi}, according to which the
following
\begin{equation}
  \label{Minlos}
  \int_{\Gamma_0}\int_{\Gamma_0} G(\eta\cup\xi) H(\eta,\xi)
  \lambda(d \eta) \lambda (d \xi) = \int_{\Gamma_0} G(\eta)
  \left(\sum_{\xi \subset \eta} H(\xi, \eta\setminus \xi)\right) \lambda(d
  \eta),
\end{equation}
holds for appropriate $G,H:\Gamma_0\to \mathds{R}$. In Appendix, by
means of (\ref{Minlos}) and (\ref{1g}) we show that
\begin{gather}
  \label{24}
(L^\Delta k)(\eta)  =  \sum_{x\in \eta} b(x) e(\tau_x;\eta\setminus
x) \left( W_x
k\right)(\eta \setminus x), \\[.2cm] \nonumber (W_x k)(\eta):=
\int_{\Gamma_0} e(t_x;\xi) k(\eta \cup \xi) \lambda (d \xi), \\[.2cm] \nonumber
\tau_x (y) :=  e^{-\phi(x-y)}, \qquad t_x (y) := \tau_x(y) -1.
\end{gather}
By (\ref{22}) we then have
\begin{eqnarray}
  \label{25}
\left|(W_x k)(\eta) \right| & \leq & \|k\|_\vartheta
e^{\vartheta|\eta|} \int_{\Gamma_0} e(|t_x|;\xi) e^{\vartheta |\xi|}
\lambda ( d\xi) \\[.2cm] \nonumber & = &  \|k\|_\vartheta
e^{\vartheta|\eta|} \exp\left( e^\vartheta \int_{\mathds{R}^d}
\left( 1 - e^{-\phi(x-y)}\right)d y\right)  \\[.2cm] \nonumber & \leq &  \|k\|_\vartheta
e^{\vartheta|\eta|} \exp\left( \langle \phi \rangle e^\vartheta
\right),
\end{eqnarray}
where we have used the assumed properties of $\phi$, see (\ref{10}).
Now we apply (\ref{25}) in (\ref{24}) and get
\begin{equation}
  \label{26}
\left| (L^\Delta k)(\eta) \right| \leq |\eta|e^{\vartheta|\eta|}
\bar{b}\|k\|_\vartheta \exp\left(\langle \phi \rangle e^\vartheta -
\vartheta \right),
\end{equation}
that holds for each $\vartheta \in \mathds{R}$. Set
\begin{equation}
  \label{27}
  \mathcal{D}_\vartheta = \left\{k\in \mathcal{K}_\vartheta: \exists C_k >0 \  |k(\eta)| \leq \frac{C_k  e^{\vartheta
  |\eta|}}{1 +|\eta|}\right\}.
\end{equation}
Similarly as in obtaining (\ref{26}) one shows that $L^\Delta k \in
\mathcal{K}_\vartheta$ for each $k \in \mathcal{D}_\vartheta$. Thus,
we define in $\mathcal{K}_\vartheta$ the (unbounded) linear operator
$L^\Delta_\vartheta := (L^\Delta, \mathcal{D}_\vartheta)$. By
(\ref{22}) and (\ref{27}) we readily obtain that
\begin{equation}
  \label{28}
 \forall \vartheta_0 < \vartheta \qquad  \mathcal{K}_{\vartheta_0}
 \subset \mathcal{D}_\vartheta.
\end{equation}
Moreover, by employing in (\ref{26}) the estimate
$|\eta|e^{-\upsilon |\eta|} \leq (\upsilon e)^{-1}$, $\upsilon >0$,
we conclude that, for each $\vartheta'>\vartheta$, cf. (\ref{23}),
$L^\Delta$ can be defined as a bounded linear operator -- denoted by
$L^\Delta_{\vartheta'\vartheta}$ -- acting from
$\mathcal{K}_\vartheta$ to $\mathcal{K}_{\vartheta'}$, the operator
norm of which satisfies
\begin{equation}
  \label{29}
 \|L^\Delta_{\vartheta'\vartheta}\| \leq \frac{\beta
 (\vartheta)e^{-\vartheta}}{e(\vartheta'-\vartheta)}, \qquad \beta (\vartheta) :=
 \bar{b} \exp\left(\langle \phi \rangle e^\vartheta \right).
\end{equation}
In the sequel, we will consider two types of linear operators
defined by (\ref{24}): (a) unbounded operators $L^\Delta_\vartheta:
\mathcal{D}_\vartheta \to \mathcal{K}_\vartheta$, $\vartheta\in
\mathds{R}$; (b) bounded operators $L^\Delta_{\vartheta'\vartheta}:
\mathcal{K}_{\vartheta}\to \mathcal{K}_{\vartheta'}$ with
$\vartheta'>\vartheta$. These operators satisfy
\begin{equation}
  \label{30}
 \forall \vartheta'> \vartheta \  \ \forall k\in \mathcal{K}_\vartheta \qquad
 L^\Delta_{\vartheta'\vartheta} k = L^\Delta_{\vartheta'} k.
\end{equation}
Now we fix some $\vartheta\in\mathds{R}$ and consider the following
Cauchy problem in $\mathcal{K}_\vartheta$
\begin{equation}
  \label{31}
  \frac{d}{dt} k_t = L^\Delta_\vartheta k_t, \qquad k_t|_{t=0}=k_0.
\end{equation}
\begin{definition}
  \label{1df}
By a (classical) solution of the problem in (\ref{31}) on a time
interval $[0,T)$ we understand a continuous map $[0,T)\ni t \mapsto
k_t  \in \mathcal{D}_\vartheta\subset \mathcal{K}_\vartheta$, which
is continuously differentiable on $(0,T)$ and such that both
equalities in (\ref{31}) are satisfied. We say that such a solution
is global (in time) if $T=+\infty$.
\end{definition}
Then item (i) of the program mentioned above assumes proving the
existence of such solutions. Note, however, that a priori $k_t$ that
solves (\ref{31}) need not be the correlation function of any state,
and hence has no direct relation to the evolution of the considered
system. To realize item (ii) that establishes such a relation we
show that this solution has the property $k_t \in \mathcal{K}^\star$
where
\begin{equation}
  \label{32}
\mathcal{K}^\star:= \bigcup_{\vartheta\in \mathds{R}}
\mathcal{K}^\star_\vartheta, \quad  \ \mathcal{K}^\star_\vartheta =
\{ k \in \mathcal{K}_\vartheta: k(\varnothing)=1 \ {\rm and} \
\forall G \in B^\star_{\rm bs}(\Gamma_0)
  \ \
  \langle \! \langle G,k\rangle \! \rangle \geq 0\},
\end{equation}
see (\ref{19}).

\subsection{The mesoscopic description}
\label{Mesoss}

It is believed that the description of an infinite interacting
particle system by means of kinetic equations is in a sense
equivalent to considering it at a more coarse-grained (mesoscopic)
spatial scale, see \cite[Chapter 8]{Jacek} and \cite{Presutti}.
Typically, passing from the micro- to the mesoscopic levels is made
with the help od a scale parameter �$\varepsilon\in(0, 1]$ in such a
way that �$\varepsilon = 1$ corresponds to the micro-level, whereas
the limit $\varepsilon \to 0$� yields the description in which the
corpuscular structure disappears and the system turns into a medium
characterized by density $\varrho$. In this limit, instead of
interactions one deals with state-dependent external forces, that is
typical to the mean-field approach of statistical physics. The
evolution $\varrho_0 \to \varrho_t$ is obtained from a kinetic
equation. It approximates the evolution of states as it may be seen
from the mesoscopic level provided these states exist. As the
Poissonian state $\pi_\varrho$ is completely characterized by its
density, cf. (\ref{I1}) (note that $k_{\pi_\varrho}(\eta)=
\prod_{x\in \eta} \varrho(x)$), the restriction to considering
densities only can be interpreted as approximating the states
$\mu_t$ by the corresponding Poissonian states $\pi_{\varrho_t}$. In
view of this, we introduce the following notion, cf.  \cite[p.
70]{FKKK}.
\begin{definition}
  \label{Poidf}
A state $\mu\in \mathcal{P}_{\rm exp} (\Gamma)$ is said to be
Poisson-approximable if: (i) there exist $\vartheta\in \mathds{R}$
and a bounded measurable $\varrho:\mathds{R}^d \to [0,+\infty)$ such
that both $k_\mu$ and $k_{\pi_\varrho}$ lie in the same
$\mathcal{K}_{\vartheta}$ ; (ii) for each $\varepsilon \in (0,1]$,
there exists $q^\varepsilon\in \mathcal{K}_{\vartheta}$ such that
$q^1 = k_\mu$ and $\|q^\varepsilon - k_{\pi_\varrho}\|_\vartheta \to
0$ as $\varepsilon \to 0^+$.
\end{definition}
Our aim is to show that the evolution of states $\mu_0\to \mu_t$
discussed above in subsection \ref{2.2ss} preserves the property
just defined relative to the time dependent density $\varrho_t$
obtained as a solution of the kinetic equation in (\ref{8z}),
understood similarly as in Definition \ref{1df}.

\subsection{The results}
First we establish the existence of the evolution of states as
discussed in subsections \ref{2.2ss} and \ref{2.3ss} and describe
some of its properties. Then we turn to the mesoscopic scale.

\subsubsection{The evolution of states}
\begin{theorem}
  \label{1tm}
Let the model satisfy items (a) and (b) of Assumption \ref{ass} and
let the initial state $\mu_0\in \mathcal{P}_{\rm exp}(\Gamma)$ and
$\vartheta_0 \in \mathds{R}$ be such that $k_{\mu_0}\in
\mathcal{K}_{\vartheta_0}$. Set $\vartheta_t = \log (e^{\vartheta_0}
+ \bar{b}t)$, $t\geq 0$. Then there exists the unique map
$[0,+\infty)\ni t \mapsto k_t \in \mathcal{K}^\star$, see
(\ref{32}), such that $k_0 = k_{\mu_0}$ and the following holds:
\begin{itemize}
  \item[{\it (i)}] for all $t>0$,
  \[
0 \leq k_t (\eta) \leq \exp\left( \vartheta_t |\eta|\right), \qquad
\eta \in \Gamma_0,
  \]
  and hence $k_t \in \mathcal{K}^\star_{\vartheta_t}$;
\item[{\it (ii)}] for each $T>0$ and $t<T$, the map $t\mapsto k_t
\in  \mathcal{K}_{\vartheta_t}\subset \mathcal{D}_{\vartheta_T}$,
cf. (\ref{28}), is continuous in $\mathcal{K}_{\vartheta_T}$ on
$[0,T)$, continuously differentiable on $(0,T)$ and satisfies
\[
\frac{d}{dt} k_t = L^\Delta_{\vartheta_T} k_t.
\]
\end{itemize}
\end{theorem}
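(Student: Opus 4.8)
The plan is to build the solution by a standard Picard-type iteration in the scale $\{\mathcal{K}_\vartheta\}$, exploiting the quantitative bound \eqref{29} on the operator norm of $L^\Delta_{\vartheta'\vartheta}$, and then upgrade the local-in-time solution to a global one using the special arithmetic of the bound. Concretely, fix $T>0$ and $\vartheta_T = \log(e^{\vartheta_0}+\bar b T)$. For $\vartheta_0 \le \vartheta < \vartheta' \le \vartheta_T$ one has $\beta(\vartheta) \le \beta(\vartheta_T) = \bar b \exp(\langle\phi\rangle e^{\vartheta_T})$, so the norms $\|L^\Delta_{\vartheta'\vartheta}\|$ are controlled by $C/(\vartheta'-\vartheta)$ with a uniform constant $C$. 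This is precisely the Ovsyannikov-type setup: one writes the mild formulation $k_t = k_0 + \int_0^t L^\Delta k_s\,ds$, iterates, and shows that the $n$-th Picard term is bounded in $\mathcal{K}_{\vartheta_T}$ by summing over chains $\vartheta_0 < \vartheta_1 < \cdots < \vartheta_n = \vartheta_T$ with the usual $t^n/n!$ gain beating the $\prod 1/(\vartheta_i-\vartheta_{i-1})$ loss after optimizing the intermediate slices (the classical Ovsyannikov estimate gives convergence for $t < (\vartheta_T-\vartheta_0)/(Ce)$). This yields a unique classical solution in the sense of Definition \ref{1df} on a short interval, with values in $\mathcal{D}_{\vartheta_T}$, and continuity/differentiability as claimed in part (ii).

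\textbf{From local to global.} The nontrivial point is that the naive Ovsyannikov bound only gives existence for a short time depending on $\vartheta_T-\vartheta_0$. To reach all of $[0,\infty)$ I would use the sharper, $b$-adapted estimate that is already foreshadowed by the choice $\vartheta_t = \log(e^{\vartheta_0}+\bar b t)$: one shows that if one propagates along the curve $t\mapsto \vartheta_t$ rather than through a fixed pair of spaces, then the growth in \eqref{25}–\eqref{26} is exactly compensated. Differentiating, $\frac{d}{dt}\vartheta_t = \bar b e^{-\vartheta_t}$, and the factor $\bar b e^{-\vartheta_t}$ is precisely what appears multiplying $|\eta| e^{\vartheta_t|\eta|}$ in \eqref{26}; this suggests introducing the a priori bound $|k_t(\eta)| \le \exp(\vartheta_t|\eta|)$ and checking it is consistent with the evolution by a Gronwall/comparison argument on the scalar quantities $u_n(t) := \sup\{e^{-\vartheta_t n}|k_t(\eta)| : |\eta|=n\}$, using that $(L^\Delta k_t)(\eta)$ involves only $k_t$ on configurations of the \emph{same or larger} size (through $W_x$), so the hierarchy, while not triangular, still closes against the exponential weight $e^{\vartheta_t|\eta|}$ because $\int_{\Gamma_0} e(|t_x|;\xi) e^{\vartheta_t|\xi|}\lambda(d\xi) = \exp(\langle\phi\rangle e^{\vartheta_t})$ stays finite. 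Patching: having a short-time solution, one restarts from $k_{t_1}\in \mathcal{K}_{\vartheta_{t_1}}$; because $\vartheta_{t_1+s}-\vartheta_{t_1}$ does \emph{not} shrink to zero as $t_1\to\infty$ in the relevant units (the logarithm keeps a positive slope $\bar b e^{-\vartheta_{t_1}}$ but the operator norm constant $\beta(\vartheta_{t_1})=\bar b\exp(\langle\phi\rangle e^{\vartheta_{t_1}})$ also grows), one must argue that the length of each restart step, measured appropriately, does not collapse — this is where the precise cancellation, not just a crude bound, is essential. I expect this global continuation to be the main obstacle, and the cleanest route is probably to prove directly that the a priori estimate $0\le k_t(\eta)\le e^{\vartheta_t|\eta|}$ holds on the maximal interval of existence, which both prevents blow-up and forces $T=+\infty$.

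\textbf{Positivity and the sub-Poissonian property.} Once the solution $k_t$ exists globally with the bound in part (i), it remains to show $k_t\in\mathcal{K}^\star$, i.e. $k_t(\varnothing)=1$ and $\langle\!\langle G,k_t\rangle\!\rangle\ge 0$ for all $G\in B^\star_{\rm bs}(\Gamma_0)$; by Proposition \ref{Tobipn} this is exactly what identifies $k_t$ as the correlation function of a unique $\mu_t\in\mathcal{P}_{\rm exp}(\Gamma)$, realizing step (ii) of the program. The condition $k_t(\varnothing)=1$ is immediate from \eqref{24}: $(L^\Delta k)(\varnothing)=0$ since the sum over $x\in\eta$ is empty, so $\frac{d}{dt}k_t(\varnothing)=0$. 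For the functional-positivity, I would pass to the predual picture: the cone dual to $\{k:\langle\!\langle G,k\rangle\!\rangle\ge 0\ \forall G\in B^\star_{\rm bs}\}$ is generated (via the $K$-transform in \eqref{7z}) by the correlation functions of states supported on finitely many points, and the adjoint of $L^\Delta$ is the generator $\widehat L = K^{-1}L^\Delta K$ acting on functions $G$ on $\Gamma_0$, which — by construction from \eqref{17} — is the generator of a \emph{pure immigration} (birth-only) Markov dynamics on finite configurations. Such a dynamics manifestly maps probability measures on $\Gamma_0$ to probability measures (no loss of mass, rates nonnegative because $b\ge 0$ and $e^{-\sum\phi}\le 1$), i.e. it is a stochastic semigroup on the corresponding $AL$-space, and hence its dual preserves the cone $\mathcal{K}^\star$. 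The clean way to run this is: (1) construct the birth-only semigroup $Q_t$ on the space of finite measures on $\Gamma_0$ (or on $L^1(\Gamma_0,\lambda)$-type densities) via the same Ovsyannikov/Gronwall machinery, noting it is substochastic and in fact stochastic by a conservativity check using Assumption \ref{ass}(a); (2) verify by duality \eqref{1g}–\eqref{16} that $\langle\!\langle G, k_t\rangle\!\rangle = \langle\!\langle Q_t G, k_0\rangle\!\rangle$ for $G\in B_{\rm bs}(\Gamma_0)$; (3) observe that $Q_t$ maps $B^\star_{\rm bs}(\Gamma_0)$ into (a limit of) itself since positivity of the $K$-image is preserved by a birth process; (4) conclude $\langle\!\langle G,k_t\rangle\!\rangle = \langle\!\langle Q_t G, k_{\mu_0}\rangle\!\rangle \ge 0$ because $k_{\mu_0}\in\mathcal{K}^\star$. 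Uniqueness of the map $t\mapsto k_t$ within $\mathcal{K}^\star$ follows from uniqueness in the Ovsyannikov scale together with the consistency relation \eqref{30}. The subtle part here is making rigorous the claim that $Q_t$ preserves $B^\star_{\rm bs}$ (equivalently, cone-positivity in the $K$-picture) for a possibly-infinite-intensity immigration — I would handle this by a cutoff in $\Lambda$ and in particle number, proving the assertion for the truncated bounded generators first and then removing the cutoff using the uniform bounds from part (i).
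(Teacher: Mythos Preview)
Your architecture---Ovsyannikov iteration for local existence, dual stochastic-semigroup argument for the $\mathcal{K}^\star$ property with cutoffs, restart for globality---matches the paper's. But there is a genuine gap in the a~priori bound $k_t(\eta)\le e^{\vartheta_t|\eta|}$, and the order in which you propose to run the argument (bound first, positivity second) cannot work.

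Your Gronwall on $u_n(t)=\sup_{|\eta|=n} e^{-\vartheta_t n}|k_t(\eta)|$ fails: from \eqref{26} one gets $|(L^\Delta k)(\eta)|\le |\eta|\,\bar b\, e^{\vartheta_t(|\eta|-1)}\|k\|_{\vartheta_t}\exp(\langle\phi\rangle e^{\vartheta_t})$, whereas $\frac{d}{dt}e^{\vartheta_t|\eta|}=|\eta|\,\bar b\, e^{\vartheta_t(|\eta|-1)}$. The extra factor $\exp(\langle\phi\rangle e^{\vartheta_t})>1$ is not cancelled by the choice of $\vartheta_t$; it comes from the $W_x$ integral over $\xi$ and cannot be removed by norm estimates alone. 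The paper's way out is a \emph{pointwise} inequality (its Lemma~\ref{F4lm}): for $k_t$ that is already known to be the correlation function of a state, one has $e(\tau_x;\eta)(W_x k_t)(\eta)\le k_t(\eta)$. This is proved through the stochastic-semigroup representation $k_t^{\Lambda,N}\leftrightarrow R_t^{\Lambda,N}$ and the trivial fact that $0\le e(\tau_x;\cdot)\le 1$ against a nonnegative density. It yields $(L^\Delta k_t)(\eta)\le \sum_{x\in\eta} b(x)\,k_t(\eta\setminus x)=(L^{0,\Delta}k_t)(\eta)$, i.e.\ domination by the \emph{free} ($\phi\equiv 0$) generator, and then a Duhamel comparison $k_t\le k_t^0$ with the explicit free solution $k_t^0(\eta)=\sum_{\xi\subset\eta}e(tb;\xi)k_{\mu_0}(\eta\setminus\xi)$, which satisfies the bound $e^{\vartheta_t|\eta|}$ exactly. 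So the logical order must be: local existence, then identification $k_t\in\mathcal{K}^\star$ on the short interval, \emph{then} the a~priori bound via comparison, then restart. Your proposal to ``use the uniform bounds from part~(i)'' to remove cutoffs in the positivity step would be circular.

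A second, more technical point you gloss over: after the $\sigma$-cutoff making $b^\sigma$ integrable, the predual Ovsyannikov evolution $H^\sigma_{\vartheta_0\vartheta}(t)G$ and the genuine stochastic semigroup $S^\dag(t)$ (bounded generator) must be shown to give the same pairings. The paper does not get this from a direct operator identity; it matches all $t$-derivatives at $t=0$ via \eqref{50b} and invokes the Denjoy--Carleman theorem (one side is analytic in $t$, the other only quasi-analytic with bounds $C n^n M^n$ from \eqref{36}). Your step ``verify by duality'' hides this.
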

The proof of this statement is performed in Section \ref{ESs} below.
\begin{corollary}
  \label{M1co}
There exists the map $[0,+\infty)\ni t
\mapsto \mu_t\in \mathcal{P}_{\rm exp}(\Gamma)$ such that $\mu_t$ is
the state of the population at time $t$. These states are
identified by their values $\mu_t(F^\theta) = \langle \! \langle
e(\theta;\cdot), k_t\rangle \!\rangle$ with $k_t$ as in Theorem \ref{1tm}, see (\ref{16}). Moreover,
for each $\theta \in \varTheta$, the map $(0,+\infty)\ni t \mapsto
\mu_t(F^\theta)\in\mathds{R}$ is differentiable and the following
holds
\[
\frac{d}{dt} \mu_t (F^\theta) = \langle \! \langle e(\theta;\cdot),
L^\Delta_{\vartheta_T} k_t\rangle \!\rangle,
\]
with any $T$ satisfying $T>t$.
\end{corollary}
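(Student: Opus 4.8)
The plan is to derive Corollary~\ref{M1co} from Theorem~\ref{1tm} together with Proposition~\ref{Tobipn}: within the scheme of subsection~\ref{2.2ss}, the microscopic evolution of states is simply the passage $k_t\mapsto\mu_t$ once the curve $t\mapsto k_t$ is available. First I would apply Proposition~\ref{Tobipn} pointwise in $t\geq 0$. By Theorem~\ref{1tm} we have $k_t\in\mathcal{K}^\star_{\vartheta_t}$ and $0\leq k_t(\eta)\leq\exp(\vartheta_t|\eta|)$ for all $\eta\in\Gamma_0$; by the definition (\ref{32}) of $\mathcal{K}^\star_{\vartheta}$ this says precisely that $k_t(\varnothing)=1$, that the estimate (\ref{I4}) holds with $\vartheta=\vartheta_t$, and that $\langle\!\langle G,k_t\rangle\!\rangle\geq 0$ for every $G\in B^\star_{\rm bs}(\Gamma_0)$ --- exactly conditions (a)--(c) of Proposition~\ref{Tobipn}. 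Hence there is a unique $\mu_t\in\mathcal{P}_{\rm exp}(\Gamma)$ with correlation function $k_t$. Since the curve $t\mapsto k_t\in\mathcal{K}^\star$ furnished by Theorem~\ref{1tm} is unique and $\mu_t$ is uniquely determined by $k_t$, the map $[0,+\infty)\ni t\mapsto\mu_t$ is well defined and unique, and it is precisely the weak evolution of states $\mu_0\to\mu_t$ of subsection~\ref{2.2ss}. The identification formula then reads off from (\ref{1fa})--(\ref{16}) with $\mu=\mu_t$: for $\theta\in\varTheta$, $\mu_t(F^\theta)=B_{\mu_t}(\theta)=\langle\!\langle e(\theta;\cdot),k_t\rangle\!\rangle$, the integral converging absolutely because $e(\theta;\cdot)$ vanishes on configurations not contained in the compact $\Lambda:=\mathrm{supp}\,\theta$ while $0\leq k_t(\eta)\leq\exp(\vartheta_t|\eta|)$, so that $\int_{\Gamma_0}\mathds{1}_{\{\eta\subset\Lambda\}}\exp(\vartheta_t|\eta|)\lambda(d\eta)=\exp(e^{\vartheta_t}|\Lambda|)<\infty$ by (\ref{8}).

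For the differentiability statement, fix $\theta\in\varTheta$, put $\Lambda=\mathrm{supp}\,\theta$, and fix $T>0$. The key point is that the linear functional $k\mapsto\ell_\theta(k):=\langle\!\langle e(\theta;\cdot),k\rangle\!\rangle$ is bounded on $\mathcal{K}_{\vartheta_T}$: using $|e(\theta;\eta)|\leq\mathds{1}_{\{\eta\subset\Lambda\}}$, the bound (\ref{22}) and the definition (\ref{8}) of $\lambda$ one gets $|\ell_\theta(k)|\leq\exp(e^{\vartheta_T}|\Lambda|)\,\|k\|_{\vartheta_T}$. By item (ii) of Theorem~\ref{1tm}, the map $(0,T)\ni t\mapsto k_t\in\mathcal{K}_{\vartheta_T}$ is continuously differentiable with $\frac{d}{dt}k_t=L^\Delta_{\vartheta_T}k_t$. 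Composing this $C^1$ curve with the bounded linear functional $\ell_\theta$ and applying the chain rule, $t\mapsto\mu_t(F^\theta)=\ell_\theta(k_t)$ is continuously differentiable on $(0,T)$ with $\frac{d}{dt}\mu_t(F^\theta)=\ell_\theta(L^\Delta_{\vartheta_T}k_t)=\langle\!\langle e(\theta;\cdot),L^\Delta_{\vartheta_T}k_t\rangle\!\rangle$. Since $T>0$ was arbitrary, this holds for all $t>0$, and the right-hand side does not depend on the choice of $T>t$ because, by (\ref{30}), $L^\Delta_{\vartheta_T}k_t$ coincides pointwise on $\Gamma_0$ with $L^\Delta k_t$.

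I do not expect a genuine obstacle here --- the substantive work sits entirely in Theorem~\ref{1tm}. The only points deserving care are that $e(\theta;\cdot)$ does \emph{not} belong to $B_{\rm bs}(\Gamma_0)$ (it carries no bound on $|\eta|$), so that its pairing with $\mathcal{K}_{\vartheta_T}$ must be controlled through the exponential weight, as above, before the derivative can be passed inside; and that the $t$-differentiation must be performed in the fixed ``large'' space $\mathcal{K}_{\vartheta_T}$ with $T>t$, not in the moving spaces $\mathcal{K}_{\vartheta_t}$ --- which is exactly the form in which item (ii) of Theorem~\ref{1tm} is phrased. Finally, using the duality (\ref{1g}) between $L$ and $L^\Delta$ one may rewrite the derivative formula as $\frac{d}{dt}\mu_t(F^\theta)=\mu_t(LF^\theta)$, a weak (measure-defining) form of the forward Kolmogorov equation, which makes precise the sense in which $\mu_t$ ``is the state of the population at time $t$''.
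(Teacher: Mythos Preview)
Your argument is correct and is precisely the derivation the paper has in mind: the paper does not spell out a separate proof of Corollary~\ref{M1co} but treats it as an immediate consequence of Theorem~\ref{1tm} combined with Proposition~\ref{Tobipn} and the identification (\ref{1fa})--(\ref{16}), which is exactly what you do. Your explicit verification that $k\mapsto\langle\!\langle e(\theta;\cdot),k\rangle\!\rangle$ is bounded on $\mathcal{K}_{\vartheta_T}$ (equivalently, that $e(\theta;\cdot)\in\mathcal{G}_{\vartheta_T}$ in the notation of (\ref{44})) and your care in differentiating inside the fixed space $\mathcal{K}_{\vartheta_T}$ rather than the moving $\mathcal{K}_{\vartheta_t}$ fill in the only details the paper leaves implicit.
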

Our next result establishes an upper bound for $\mu_t(N_\Lambda)$,
cf. (\ref{0}). For $r$ as in item (c) of Assumption \ref{ass}, we
set $\Delta_x=\{ y\in \mathds{R}^d: |x-y|\leq r/2\}$ and let
$\upsilon$ be the volume (Lebesgue's measure) of this ball.
\begin{theorem}
  \label{2tm}
Let $\mu_t$, $t>0$ be the state of the population as in Corollary \ref{M1co}.
In addition to items (a) and (b), assume that also item (c) of
Assumption \ref{ass} holds true. Then, for each compact
$\Lambda\subset \mathds{R}^d$ and $t>0$, the  following holds
\begin{equation}
  \label{32b}
  \mu_t(N_\Lambda) \leq \frac{m_\Lambda}{\phi_*} \log \left(
 C^{\phi_*}_{\Delta_0} (\mu_0) + (e^{\phi_*} -1) \bar{b} \upsilon t \right),
\end{equation}
where $m_\Lambda$ is the minimum number of the balls $\Delta_x$ that
cover $\Lambda$ and $C^{\phi_*}_{\Delta_0}(\mu_0)$ is given by the
right-hand side of (\ref{I4d}) with $\vartheta = \vartheta_0$, the
same  as in Theorem \ref{1tm}. If $\mu_0(\Gamma^{(0)})=1$, i.e., if
the system is initially empty, then one may take
$C^{\phi_*}_{\Delta_0}(\mu_0) =1$.
%\end{equation}
\end{theorem}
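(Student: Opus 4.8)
The plan is to reduce the global estimate on $\mu_t(N_\Lambda)$ to a one-ball estimate and then to a differential inequality for the single quantity $u_t := \mu_t(F^{\phi_*}_{\Delta_0})$, where $\Delta_0 = \{y : |y| \le r/2\}$ and $F^{\alpha}_{\Lambda}$ is the exponential observable of \eqref{I4d}. First I would use the covering: if $\Delta_{x_1}, \dots, \Delta_{x_{m_\Lambda}}$ cover $\Lambda$, then $N_\Lambda \le \sum_{i=1}^{m_\Lambda} N_{\Delta_{x_i}}$ pointwise on $\Gamma$, so $\mu_t(N_\Lambda) \le \sum_i \mu_t(N_{\Delta_{x_i}})$; hence it suffices to bound $\mu_t(N_{\Delta_x})$ for an arbitrary ball $\Delta_x$ of radius $r/2$, uniformly in $x$. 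Next, since $\phi_* N \le e^{\phi_* N} = F^{\phi_*}_{\Delta_x}(\gamma)$ for $N = N_{\Delta_x}(\gamma)$, we get $\mu_t(N_{\Delta_x}) \le \phi_*^{-1} \mu_t(F^{\phi_*}_{\Delta_x})$, and by translation we may as well write $u_t = \mu_t(F^{\phi_*}_{\Delta_0})$ and aim to show $u_t \le C^{\phi_*}_{\Delta_0}(\mu_0) + (e^{\phi_*}-1)\bar b \upsilon t$; then \eqref{32b} follows by applying $\log$ (using $\log$ monotone and $\phi_* \log(\cdot)/m_\Lambda$-type bookkeeping across the balls, noting $\log(\sum a_i) \le \sum \log a_i$ only needs the one-ball bound, so actually one sums the one-ball logarithmic bounds directly).

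The core step is a differential inequality for $u_t$. Using Corollary \ref{M1co} with $F^\theta$ — more precisely, approximating $F^{\phi_*}_{\Delta_0}$ by members of $\mathcal{F}$, or equivalently using \eqref{16} together with the action of $L^\Delta$ from \eqref{24} — I would compute $\frac{d}{dt}\mu_t(F^{\phi_*}_{\Delta_0}) = \mu_t(LF^{\phi_*}_{\Delta_0})$. By the form of $L$ in \eqref{17}, $(LF^{\phi_*}_{\Delta_0})(\gamma) = \int_{\mathds{R}^d} b(x) e^{-\sum_{y\in\gamma}\phi(x-y)}[F^{\phi_*}_{\Delta_0}(\gamma\cup x) - F^{\phi_*}_{\Delta_0}(\gamma)]\,dx$. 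The increment $F^{\phi_*}_{\Delta_0}(\gamma\cup x) - F^{\phi_*}_{\Delta_0}(\gamma)$ equals $(e^{\phi_*}-1)F^{\phi_*}_{\Delta_0}(\gamma)$ if $x\in\Delta_0$ and $0$ otherwise. So $(LF^{\phi_*}_{\Delta_0})(\gamma) = (e^{\phi_*}-1)F^{\phi_*}_{\Delta_0}(\gamma)\int_{\Delta_0} b(x) e^{-\sum_{y\in\gamma}\phi(x-y)}\,dx$. Now comes the use of item (c) of Assumption \ref{ass}: for $x\in\Delta_0$ and $y\in\Delta_0$ we have $|x-y|\le r$, so $\phi(x-y)\ge\phi_*$; discarding the contributions from $y\notin\Delta_0$ (they only make the exponent more negative), $\sum_{y\in\gamma}\phi(x-y) \ge \phi_* N_{\Delta_0}(\gamma)$, whence $e^{-\sum_{y\in\gamma}\phi(x-y)} \le e^{-\phi_* N_{\Delta_0}(\gamma)} = 1/F^{\phi_*}_{\Delta_0}(\gamma)$. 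Therefore $(LF^{\phi_*}_{\Delta_0})(\gamma) \le (e^{\phi_*}-1)\bar b \upsilon$, a constant, and integrating against $\mu_t$ gives $\frac{d}{dt}u_t \le (e^{\phi_*}-1)\bar b\upsilon$. Integrating in $t$ and using $u_0 = \mu_0(F^{\phi_*}_{\Delta_0}) \le C^{\phi_*}_{\Delta_0}(\mu_0)$ from \eqref{I4d} yields the bound on $u_t$. When $\mu_0(\Gamma^{(0)})=1$ we have $u_0 = 1$, giving the stated improvement.

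The main obstacle is not the computation above — which is short and uses the repulsion crucially through item (c) — but the \emph{justification} that $\frac{d}{dt}\mu_t(F^{\phi_*}_{\Delta_0}) = \mu_t(LF^{\phi_*}_{\Delta_0})$, since $F^{\phi_*}_{\Delta_0}$ is not of the form $F^\theta$ with $\theta\in\varTheta$ (it is unbounded and corresponds to $\theta \equiv e^{\phi_*}-1 > 0$ on $\Delta_0$, outside the admissible range $(-1,0]$). The remedy is a monotone/dominated approximation: write $F^{\phi_*}_{\Delta_0}$ as a limit of finite linear combinations of $F^{\theta}$'s (or truncate $N_{\Delta_0}$ at level $M$), apply Corollary \ref{M1co} and the $L^\Delta$-action on each, then pass to the limit using the uniform sub-Poissonian moment bounds \eqref{I4c}–\eqref{I4d} — valid because $k_t \in \mathcal{K}^\star_{\vartheta_t}$ with $\vartheta_t = \log(e^{\vartheta_0}+\bar b t)$ by Theorem \ref{1tm}(i) — to control the tails uniformly on compact time intervals and interchange limit, integral, and derivative. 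Once this technical passage is in place, the integration of the differential inequality and the covering argument complete the proof.
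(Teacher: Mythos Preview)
Your approach is essentially the paper's: cover $\Lambda$ by sets of diameter $\le r$, set $F=e^{\phi_* N}$, compute $(LF)(\gamma)\le (e^{\phi_*}-1)\bar b\,\upsilon$ pointwise via item~(c), integrate the resulting differential inequality, and finish with Jensen's inequality --- note that you need Jensen $\phi_*\,\mu_t(N_{\Delta_x})\le\log\mu_t(F^{\phi_*}_{\Delta_x})$ here, not the pointwise bound $\phi_* N\le e^{\phi_* N}$ you wrote, to extract the logarithm. For the justification of $\frac{d}{dt}\mu_t(F)=\mu_t(LF)$, the paper proceeds by passing to the projected measure $\mu_t^{\Lambda_l}$ on $\Gamma_{\Lambda_l}$ and using the Fokker--Planck relation for its density $R^{\Lambda_l}_t$, rather than the monotone approximation you outline.
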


\subsubsection{The mesoscopic evolution}

As mentioned in Introduction, the mesoscopic theory of the evolution
of our model is based on the kinetic equation in (\ref{Z1}) the
translation invariant version of which is
\begin{equation}
  \label{8z}
\frac{d}{dt} \varrho_t (x) = b(x) \exp\bigg{(} - (\phi\ast
\varrho_t) (x) \bigg{)},
\end{equation}
where
\begin{equation}
  \label{9}
  (\phi\ast
\varrho_t) (x) := \int_{\mathds{R}^d} \phi(x-y) \varrho_t (y) dy.
\end{equation}
The definition of its classical solution is pretty similar to that
given in Definition \ref{1df}. Let $\varrho_0\in
L^\infty(\mathds{R}^d)$ be the initial condition in (\ref{8z}). Then
we set
\begin{equation}
  \label{c4}
 b^{+} = \langle \phi \rangle \esssup_{x\in \mathds{R}^d} b(x) e^{-(\phi*\varrho_0)
  (x)},  \quad  b^{-} = \langle \phi \rangle \essinf_{x\in \mathds{R}^d} b(x) e^{-(\phi*\varrho_0)
  (x)},
\end{equation}
and also
\begin{eqnarray}
  \label{c2}
\omega_{+} (t)& = & \omega_{-}(t) + (b^{+}-b^{-})t, \quad {\rm for}
\ \
b^{+} > b^{-}, \\[.2cm] \nonumber
\omega_{-} (t) & = & \log\left(\frac{b^{+}}{b^{+} - b^{-}} -
\frac{b^{-}}{b^{+} - b^{-}} e^{-(b^{+}-b^{-})t}
 \right), \quad b^{+} >b^{-}, \\[.2cm] \nonumber
\omega_{+} (t) & = & \omega_{-} (t) = \log \left(1 + b t \right),
\quad {\rm for} \ \  b^{+} = b^{-} = b.
\end{eqnarray}
\begin{theorem}
  \label{3tm}
For each $\varrho_0 \in L^\infty(\mathds{R}^d)$, the kinetic
equation in (\ref{8z}), (\ref{9}) has a unique global classical
solution $\varrho_t \in L^\infty(\mathds{R}^d)$ that for all $t>0$
and almost all $x\in \mathds{R}^d$ satisfies
\begin{equation}
  \label{c1}
\varrho_0 (x) + \frac{\omega_{-} (t)}{\langle \phi \rangle} \leq
\varrho_t(x) \leq \varrho_0 (x) + \frac{\omega_{+} (t)}{\langle \phi
\rangle}.
\end{equation}
\end{theorem}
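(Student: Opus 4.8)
\textbf{Proof plan for Theorem \ref{3tm}.}
The plan is to treat \eqref{8z} as an abstract ODE in the Banach space $X=L^\infty(\mathds{R}^d)$ and apply the Picard--Lindel\"of theorem for locally Lipschitz vector fields. Write the right-hand side as $\varrho\mapsto \mathcal{N}(\varrho)$, $\mathcal{N}(\varrho)(x)=b(x)\exp(-(\phi\ast\varrho)(x))$. Since $\phi\in L^1$ and $b\in L^\infty$, the convolution operator $\varrho\mapsto \phi\ast\varrho$ is bounded from $X$ to $X$ with norm $\langle\phi\rangle$; moreover $\phi\geq 0$, so if $\varrho\geq 0$ a.e.\ then $(\phi\ast\varrho)(x)\geq 0$ a.e.\ and hence $0\leq \mathcal{N}(\varrho)(x)\leq \bar b$ a.e. Using $|e^{-a}-e^{-a'}|\leq |a-a'|$ for $a,a'\geq 0$ (and in general $|e^{-a}-e^{-a'}|\leq e^{\max(|a|,|a'|)}|a-a'|$ on bounded sets), one gets that $\mathcal{N}$ is locally Lipschitz on $X$: $\|\mathcal{N}(\varrho)-\mathcal{N}(\varrho')\|_\infty\leq \bar b\,\langle\phi\rangle\, e^{\langle\phi\rangle\max(\|\varrho\|_\infty,\|\varrho'\|_\infty)}\|\varrho-\varrho'\|_\infty$. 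This yields a unique local classical solution $t\mapsto\varrho_t\in X$ on a maximal interval $[0,T_{\max})$, continuously differentiable in the $X$-norm. (The pointwise-a.e.\ reading of \eqref{8z} follows because the $X$-valued derivative agrees a.e.\ with the pointwise one.)

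Next I would establish the two-sided bound \eqref{c1}, which simultaneously gives global existence. Fix a representative of $\varrho_0$ and work pointwise for a.e.\ $x$. Along the flow the derivative is $\dot\varrho_t(x)=b(x)e^{-(\phi\ast\varrho_0)(x)}\exp(-(\phi\ast(\varrho_t-\varrho_0))(x))$. Since $\varrho_t\geq 0$ (the solution stays nonnegative: where $\varrho_t(x)$ would hit $0$ the derivative is $\geq0$, and in the $L^\infty$ setting this can be made rigorous via Gr\"onwall on $\max(-\varrho_t,0)$, whose initial value is $0$), we have $0\leq (\phi\ast(\varrho_t-\varrho_0))(x)\leq \langle\phi\rangle(\|\varrho_t-\varrho_0\|_\infty)$ is not yet enough; instead I would bound $(\phi\ast\varrho_t)(x)$ below and above by the \emph{scalar} comparison functions. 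Set $u_t=\esssup_x(\varrho_t-\varrho_0)(x)$ and $\ell_t=\essinf_x(\varrho_t-\varrho_0)(x)$. From monotonicity of $\phi\ast(\cdot)$ on nonnegative arguments and $\phi\geq0$ one gets, for a.e.\ $x$,
\begin{equation}
\label{plan:comp}
\frac{b^{-}}{\langle\phi\rangle}\,e^{-\langle\phi\rangle\,u_t}\ \le\ \dot\varrho_t(x)\ \le\ \frac{b^{+}}{\langle\phi\rangle}\,e^{-\langle\phi\rangle\,\ell_t},
\end{equation}
with $b^{\pm}$ as in \eqref{c4}. Writing $\omega_t(x)=\langle\phi\rangle(\varrho_t-\varrho_0)(x)$, \eqref{plan:comp} shows $\omega_t$ is squeezed between the solutions of the scalar ODEs $\dot w=b^{-}e^{-w^{+}}$ and $\dot w=b^{+}e^{-w^{-}}$ with $w_0=0$; comparing instead with the single autonomous ODE governing $\esssup$ and $\essinf$ directly, one integrates explicitly (this is exactly where $\omega_{\pm}$ in \eqref{c2} come from: for $b^{+}>b^{-}$ the upper equation $\dot w = b^{+}e^{-w}$ on the relevant range integrates to the stated $\omega_{-}$-type expression, and the lower one to the affine branch, after checking which of $\esssup/\essinf$ solves which). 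Carrying the comparison through gives $\omega_{-}(t)\leq \omega_t(x)\leq \omega_{+}(t)$ a.e., i.e.\ \eqref{c1}. In particular $\|\varrho_t\|_\infty\leq \|\varrho_0\|_\infty+\omega_{+}(t)/\langle\phi\rangle$ stays finite on every finite interval, so $T_{\max}=+\infty$ and the solution is global.

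The main obstacle is the comparison argument in the $L^\infty$ setting: \eqref{plan:comp} couples the pointwise derivative at $x$ to the global quantities $u_t,\ell_t$, so one cannot apply a naive pointwise ODE comparison. The clean way is a differential-inequality argument for the scalar functions $t\mapsto u_t$ and $t\mapsto \ell_t$: since $t\mapsto\varrho_t$ is $C^1$ into $L^\infty$, the functions $u_t,\ell_t$ are locally Lipschitz, hence differentiable a.e., and at a.e.\ $t$ one has $\dot u_t\le \sup_x\dot\varrho_t(x)\le (b^{+}/\langle\phi\rangle)e^{-\langle\phi\rangle\ell_t}$ and symmetrically $\dot\ell_t\ge (b^{-}/\langle\phi\rangle)e^{-\langle\phi\rangle u_t}$ (the first inequality for $\dot u_t$ follows from the envelope/Danskin-type estimate $\dot u_t \le \esssup_x \dot\varrho_t(x)$, valid because $u_{t+h}\ge (\varrho_{t+h}-\varrho_0)(x)$ for every fixed $x$). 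This gives a closed $2$-dimensional differential inequality for $(u_t,\ell_t)$ which one compares with the solution of the corresponding planar ODE system; solving that system in closed form (trivial when $b^{+}=b^{-}$, and by the substitution reducing it to \eqref{Z3}-type algebra when $b^{+}\neq b^{-}$) produces exactly $\omega_{\pm}$. A secondary technical point is justifying that the solution is genuinely $C^1$ \emph{pointwise a.e.}\ and that the nonnegativity $\varrho_t\ge0$ propagates; both are handled by the Lipschitz estimate on $\mathcal{N}$ together with a Gr\"onwall argument, and cause no real difficulty. Uniqueness in $L^\infty$ is immediate from the local Lipschitz property of $\mathcal{N}$.
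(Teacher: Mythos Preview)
Your proposal is correct and follows essentially the same route as the paper. The paper also reduces to a fixed-point argument for local existence (working with the shifted unknown $u_t=\langle\phi\rangle(\varrho_t-\varrho_0)$ and showing the integral map is a contraction for $b^{+}T<1$), then derives the bounds by sandwiching $u_t$ between solutions of the coupled planar system $\dot\omega_{-}=b^{-}e^{-\omega_{+}}$, $\dot\omega_{+}=b^{+}e^{-\omega_{-}}$, solved explicitly via the observation $\ddot\omega_{+}=\ddot\omega_{-}$ (hence $\omega_{+}-\omega_{-}=(b^{+}-b^{-})t$), and finally iterates on successive intervals to go global.

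The only substantive difference is in how the comparison is justified: the paper simply asserts that the pointwise bounds $\omega_{-}\le u_t\le\omega_{+}$ force the derivative bounds and vice versa, whereas you make this rigorous via the envelope/Danskin estimate for $t\mapsto\esssup_x$ and $t\mapsto\essinf_x$, yielding a closed differential inequality for the pair $(\ell_t,u_t)$. Your version is arguably cleaner on this point. Your brief detour through nonnegativity of $\varrho_t$ is unnecessary (and not assumed in the statement); the right object, as you then correctly identify, is $\varrho_t-\varrho_0$, which is nonnegative because the right-hand side of the equation is. For global extension you invoke the blow-up alternative while the paper restarts the contraction argument on translated intervals; both are routine.
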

\begin{theorem}
  \label{4tm}
Let $k_t$ and $\varrho_t$ be the solutions described by Theorems
\ref{1tm} and \ref{3tm}, respectively. Assume also that the initial
state $\mu_0$ in Theorem \ref{1tm} is Poisson approximable. That is,
there exist $\vartheta_0\in \mathds{R}$ and $q_{0,\varepsilon} \in
\mathcal{K}_{\vartheta_0}$, $\varepsilon \in (0,1]$  such that
$q_{0,1} = k_{\mu_0}$ and $\|q_{0,\varepsilon} -
k_{\pi_{\varrho_0}}\|_{\vartheta_0} \to 0$ as $\varepsilon \to
0^{+}$. Then there exist $\vartheta > \vartheta_0$ and $T>0$ such
that
\begin{equation}
\label{rho} \lim_{\varepsilon\to 0^{+}} \sup_{t\in [0,T]} \|q_{t,
\varepsilon}- k_{\pi_{\varrho_t}}\|_\vartheta = 0.
\end{equation}
\end{theorem}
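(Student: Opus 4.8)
The plan is to obtain the mesoscopic limit by a Vlasov-type scaling of the operator $L^\Delta$ of (\ref{24}) together with a perturbative comparison of the resulting family of evolutions inside the scale $\{\mathcal{K}_\vartheta\}_{\vartheta\in\mathds{R}}$. For $\varepsilon\in(0,1]$ set $\tau^\varepsilon_x(y)=e^{-\varepsilon\phi(x-y)}$, $t^\varepsilon_x(y)=\tau^\varepsilon_x(y)-1$, and define
\[
(L^\Delta_\varepsilon k)(\eta)=\sum_{x\in\eta}b(x)\,e(\tau^\varepsilon_x;\eta\setminus x)\,(W^\varepsilon_x k)(\eta\setminus x),\qquad (W^\varepsilon_x k)(\zeta):=\int_{\Gamma_0}e(\varepsilon^{-1}t^\varepsilon_x;\xi)\,k(\zeta\cup\xi)\,\lambda(d\xi),
\]
so that $L^\Delta_1=L^\Delta$; let $L^\Delta_0$ be given by the same formula with $e(\tau^\varepsilon_x;\eta\setminus x)$ replaced by $1$ and $\varepsilon^{-1}t^\varepsilon_x$ by $-\phi(x-\cdot)$, which is the pointwise $\varepsilon\to0^{+}$ limit. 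Since $0\le\varepsilon^{-1}(1-e^{-\varepsilon\phi(x-y)})\le\phi(x-y)$ and $0<e(\tau^\varepsilon_x;\cdot)\le 1$ uniformly in $\varepsilon\in(0,1]$, the estimates (\ref{25}), (\ref{26}) and (\ref{29}) hold verbatim for $L^\Delta_\varepsilon$, and for $L^\Delta_0$, with exactly the same right-hand sides. Hence the scale-of-Banach-spaces construction behind Theorem \ref{1tm} produces, for every $\varepsilon\in[0,1]$, a unique solution $t\mapsto q_{t,\varepsilon}$ of $\frac{d}{dt}q_{t,\varepsilon}=L^\Delta_\varepsilon q_{t,\varepsilon}$ with the prescribed $q_{0,\varepsilon}$ (and $q_{0,0}:=k_{\pi_{\varrho_0}}$), all defined on one interval $[0,T]$ and taking values in one space $\mathcal{K}_\vartheta$ with $\vartheta>\vartheta_0$, the numbers $T$ and $\vartheta$ being independent of $\varepsilon$. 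For $\varepsilon=1$, uniqueness gives $q_{t,1}=k_t$, which is the microscopic evolution of Theorem \ref{1tm}.

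Next I would verify that the limiting dynamics preserves the Poissonian form. Writing $k_{\pi_\varrho}=e(\varrho;\cdot)$ and using $\int_{\Gamma_0}e(g;\xi)\lambda(d\xi)=\exp\big(\int_{\mathds{R}^d}g(y)\,dy\big)$, a direct computation gives
\[
(L^\Delta_0 k_{\pi_\varrho})(\eta)=\sum_{x\in\eta}b(x)\exp\Big(-\int_{\mathds{R}^d}\phi(x-y)\varrho(y)\,dy\Big)e(\varrho;\eta\setminus x)=\sum_{x\in\eta}b(x)\,e^{-(\phi\ast\varrho)(x)}e(\varrho;\eta\setminus x),
\]
whereas $\frac{d}{dt}k_{\pi_{\varrho_t}}(\eta)=\sum_{x\in\eta}\dot\varrho_t(x)\,e(\varrho_t;\eta\setminus x)$. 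Comparing the two (it suffices to test on $\eta=\{x\}$), the curve $t\mapsto k_{\pi_{\varrho_t}}$ solves $\frac{d}{dt}q=L^\Delta_0 q$ with $q_0=k_{\pi_{\varrho_0}}$ if and only if $\varrho_t$ solves the kinetic equation (\ref{8z}). By Theorem \ref{3tm} the latter has a unique global solution, and by (\ref{c1}) one has $\sup_{t\in[0,T]}\|\varrho_t\|_{L^\infty}\le e^{\vartheta_0}+\omega_{+}(T)/\langle\phi\rangle$; hence, taking $T$ small enough relative to a fixed chain $\vartheta_0<\vartheta''<\vartheta$, one gets $k_{\pi_{\varrho_t}}\in\mathcal{K}_{\vartheta''}\subset\mathcal{D}_\vartheta$ for all $t\in[0,T]$, cf. (\ref{28}). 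Therefore $q_{t,0}=k_{\pi_{\varrho_t}}$ by the same uniqueness.

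It remains to estimate $r_{t,\varepsilon}:=q_{t,\varepsilon}-k_{\pi_{\varrho_t}}$, which solves the inhomogeneous problem $\frac{d}{dt}r_{t,\varepsilon}=L^\Delta_\varepsilon r_{t,\varepsilon}+g_{t,\varepsilon}$, $g_{t,\varepsilon}:=(L^\Delta_\varepsilon-L^\Delta_0)k_{\pi_{\varrho_t}}$, with $r_{0,\varepsilon}=q_{0,\varepsilon}-k_{\pi_{\varrho_0}}$. The source term is controlled by the elementary bounds $|e(\tau^\varepsilon_x;\zeta)-1|\le\varepsilon\sum_{y\in\zeta}\phi(x-y)$ and $0\le e^{-s}-1+s\le s^2/2$ (the latter with $s=\varepsilon\phi(x-y)$, which yields $\big|\int(\varepsilon^{-1}t^\varepsilon_x+\phi(x-\cdot))\varrho_t\big|\le\tfrac12\varepsilon\bar{\phi}\,\langle\phi\rangle\|\varrho_t\|_{L^\infty}$); together with $k_{\pi_{\varrho_t}}=e(\varrho_t;\cdot)$ and (\ref{c1}) this gives $\sup_{t\in[0,T]}\|g_{t,\varepsilon}\|_{\vartheta'}\le C\varepsilon$ for a suitable $\vartheta'\in(\vartheta'',\vartheta)$, the extra polynomial factor in $|\eta|$ produced by these differences being absorbed by the index shift, as with the inequality $|\eta|e^{-\upsilon|\eta|}\le(\upsilon e)^{-1}$ used after (\ref{28}). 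Inserting this and $\|r_{0,\varepsilon}\|_{\vartheta_0}\to 0$ into the Duhamel representation of $r_{t,\varepsilon}$ in the scale $\{\mathcal{K}_\vartheta\}$ -- that is, iterating the bounded operators $L^\Delta_{\varepsilon;\vartheta'\vartheta}$ exactly as in the construction behind Theorem \ref{1tm}, which is legitimate because their norms (\ref{29}) are $\varepsilon$-independent -- yields $\sup_{t\in[0,T]}\|r_{t,\varepsilon}\|_\vartheta\le C'\big(\|r_{0,\varepsilon}\|_{\vartheta_0}+\varepsilon\big)\to 0$ as $\varepsilon\to 0^{+}$, which is (\ref{rho}).

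The main difficulty is not any single estimate -- each of the three above is either elementary or a routine variant of something already in the paper -- but the bookkeeping that fits them together: one must fix $T$ and a finite chain of indices $\vartheta_0<\vartheta''<\vartheta'<\vartheta$ so that simultaneously (a) the scaled Cauchy problems are solvable on all of $[0,T]$ with values in $\mathcal{K}_\vartheta$, uniformly in $\varepsilon\in[0,1]$; (b) the bound (\ref{c1}) keeps $k_{\pi_{\varrho_t}}$ in $\mathcal{K}_{\vartheta''}$ for every $t\in[0,T]$; and (c) enough room remains between consecutive indices for the Duhamel series of $r_{t,\varepsilon}$ to converge and for the $O(\varepsilon)$ bound on $g_{t,\varepsilon}$ to survive the passage through it. What makes this possible with a single $T$ and $\vartheta$ is precisely the uniformity in $\varepsilon$ of the bounds (\ref{25})--(\ref{29}) for $L^\Delta_\varepsilon$; establishing that uniformity and then running the scale machinery of Theorem \ref{1tm} in the $\varepsilon$-dependent setting is the technical core of the argument.
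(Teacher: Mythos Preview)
Your proposal is correct and follows essentially the same route as the paper: build the rescaled operators $L^\Delta_\varepsilon$ with $\varepsilon$-uniform bounds (\ref{25})--(\ref{29}), obtain the corresponding propagators $Q^\varepsilon_{\vartheta\vartheta_0}(t)$ on a common interval, identify the $\varepsilon=0$ flow with $k_{\pi_{\varrho_t}}$ via the kinetic equation (your computation is the content of the paper's Lemma~\ref{k1lm}), and then compare by Duhamel. The only organizational difference is that you phrase the comparison as a single inhomogeneous problem for $r_{t,\varepsilon}$ with source $g_{t,\varepsilon}=(L^\Delta_\varepsilon-L^\Delta_0)k_{\pi_{\varrho_t}}$, whereas the paper splits $q_{t,\varepsilon}-k_{\pi_{\varrho_t}}=J^1_\varepsilon(t)+J^2_\varepsilon(t)$ with $J^2_\varepsilon=Q^\varepsilon(t)(q_{0,\varepsilon}-k_{\pi_{\varrho_0}})$ and $J^1_\varepsilon=\int_0^t Q^\varepsilon(t-s)D^\varepsilon k_{\pi_{\varrho_s}}\,ds$; these two formulations are algebraically the same Duhamel identity. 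Your $O(\varepsilon)$ bound on $g_{t,\varepsilon}$ exploits the explicit Poissonian form $k_{\pi_{\varrho_t}}=e(\varrho_t;\cdot)$, while the paper bounds $W^\varepsilon_x-W^0_x$ on general $k$ via a product--difference inequality; both yield the same conclusion. The index bookkeeping you flag as the main difficulty is exactly what the paper carries out explicitly: it fixes $\vartheta=\vartheta_0+\delta(\vartheta_0)$, $T=\tau(\vartheta_0)/2$, $\vartheta_1=\vartheta_T$, and chooses $\vartheta_2\in(\vartheta_T,\vartheta)$, then verifies the compatibility $T<T(\vartheta,\vartheta_T)$ in the Appendix.
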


\subsection{Comments}
\label{CommentsSS}

\subsubsection{The choice of the model}

Our choice of the translation invariant repulsion made in (\ref{17})
is motivated only by the convenience and simplicity of the
presentation. A more general version could contain $K(x,y)$ instead
of $\phi(x-y)$, cf. (\ref{Z1}). In this case, the theory developed
below would need a proper modification. As mentioned in
Introduction, the model can also be employed as a part of more
involved models. Let us mention some of them. As might be seen from
the example in (\ref{Z2}), (\ref{Z3}), the homogeneous solutions can
be unstable to small perturbations of the (homogeneous) rate $b$.
With this regard, a generalization of the model can be made by
allowing $b$ be dependent on some extra parameters, e.g., be random.
A possible choice might be
\begin{equation*}
  %\label{U1}
  b(x) = b\exp\left(\sum_{y\in \omega} \Phi(x,y)\right),
\end{equation*}
where $\omega\subset \mathds{R}^d$ is the configuration of some
entities (attraction centers), distributed e.g., according to a
Poisson law, and $\Phi(x,y)$ is an attraction/repulsion potential.
Another possibility, close to just mentioned, is to consider a
two-type system of the Widom-Rowlinson type
\cite{asia1,FKKO,KKo,KKo1}. In this system, the particles of
different types would repel each other whereas those of the same
type not interact. Then instead of (\ref{Z1}) we would have
\begin{eqnarray*}
 % \label{U2}
 \frac{d}{dt}\varrho_{0,t} (x) & = & b_0(x) \exp\left(- \int_{\mathds{R}^d}K(x,y) \varrho_{1,t}(y) dy
 \right), \\[.2cm]
 \frac{d}{dt}\varrho_{1,t} (x) & = & b_1(x) \exp\left(- \int_{\mathds{R}^d}K(x,y) \varrho_{0,t}(y) dy
 \right), \nonumber
\end{eqnarray*}
that strongly resemble the system in (\ref{Z2}), which manifests the
instability to appearing patterns discussed in Introduction. Another
possibility to modify the model is to include some motion of the
entities, e.g., random jumps as in the Kawasaki model \cite{asia}.

\subsubsection{Theorem \ref{1tm} and Corollary \ref{M1co}}

These statements establish the existence and uniqueness of the
global in time evolution of states through the evolution of the
corresponding correlation functions. In general, here we follow the
scheme developed and used in \cite{asia1,asia,KK2,Krzys}. Its main
aspects are outlined in subsection \ref{2.2ss}. Technically, the
proof of Theorem \ref{1tm} is the most involved part of this work,
divided into several steps. The hardest one is to prove that the
evolution $k_0\to k_t$ is such that $k_t$ be the correlation
function of a unique state. Usually, this link between correlation
functions and micro-state is not even discussed.

In the course of preparing the very formulation of Theorem \ref{1tm}
we have obtained the `true' hierarchy of evolution equations for
correlation functions of all orders mentioned in Introduction. It is
hidden in (\ref{15}) or (\ref{31}). To see it one has to recall that
$k(\{x_1, \dots, x_n\})= k^{(n)} (x_1 , \dots , x_n)$ and $k^{(n)}$
is the $n$-th order correlation function, cf. (\ref{7}). Then the
equation for $k_t^{(1)}$ (the first member of the hierarchy) reads,
see (\ref{8}) and (\ref{24}),
\begin{eqnarray*}
 % \label{U5}
\frac{d}{dt} k_t^{(1)} (x) & = & b(x)\\[.2cm]  & + & b(x) \sum_{n=1}^\infty \frac{1}{n!} \int_{(\mathds{R}^d)^n
}\left(\prod_{i=1}^{n} [e^{-\phi(x-x_i)} -1]\right) k^{(n)}_t(x_1,
\dots , x_n) d x_1 \cdots d x_n. \nonumber
\end{eqnarray*}
In contrast to birth-and-death models \cite{Baker,FKKK,Moment}, this
equation contains correlation functions of all orders. Its `naive'
decoupling consists in setting $k^{(n)}_t(x_1, \dots , x_n)  \approx
k^{(1)}_t(x_1) \cdots k^{(1)}_t(x_n)$, which yields
\begin{equation*}
  %\label{U6}
 \frac{d}{dt} k_t^{(1)} (x) = b(x) \exp\left( -
\int_{\mathds{R}^d}\left(1-e^{-\phi(x-y)}\right) k^{(1)}_t (y) dy
\right),
\end{equation*}
that can be considered as a version of (\ref{8z}), (\ref{9}). To
realize a more advanced decoupling here, one should write the next
equations in the hierarchy.

\subsubsection{Theorem \ref{2tm}}

This statement gives an example of the self-regulation of a complex
system due to interactions between its constituent, cf.
\cite{KK1,KK2}. As mentioned in Introduction and Section \ref{2S},
in the free version of our model (with $\phi=0$), the particle
density increases linearly in time, cf. (\ref{Poisson}). Then
Theorem \ref{2tm} shows that the repulsion satisfying item (c) of
Assumption \ref{ass} yields a significant attenuation of this
increase. This qualitatively corresponds to the case of $\alpha \geq
1$ in (\ref{Z2}) -- (\ref{Z4}) where the repulsion from the same
patch is more essential. Hence, one might speculate that under the
mentioned condition in item (c) the almost homogeneity of the
environment (encrypted in $b$) yields an almost homogeneous
distribution of entities. Or -- in other words -- the homogeneous
distribution is stable to small perturbations of the homogeneity of
the environment. However, this might be only a guess as we failed to
get any mathematical result in this direction. Moreover, the result
of Theorem \ref{2tm} cannot be transferred to the kinetic equation
(\ref{8z}), (\ref{9}). The reason for this is that in the proof we
crucially use the fact that the states $\mu_t$ satisfy (in the weak
sense, see (\ref{2})) the Kolmogorov equation (\ref{1}) with $L$
given in (\ref{17}). At the same time, there is no analogy of such
$L$ related to the kinetic equation, see also comments to Theorem
\ref{4tm}.

\subsubsection{Theorem \ref{3tm}}

The existence and uniqueness stated in this theorem are quite
expectable, and its most interesting result is the bounds
(\ref{c1}). Assume that the initial distribution of the entities is
tuned in such a way that $b^+=b^{-}=b$, see (\ref{c2}). Then the
solution is obtained explicitly in the form
\[
 \varrho_t (x) = \varrho_0(x) + \frac{1}{\langle \phi \rangle} \log(1 + bt),
\]
that is, its $x$-dependence repeats that of $\varrho_0(x)$. By
(\ref{c4}) the mentioned tuning consists in satisfying
\[
 \forall x\in \mathds{R}^d \ \qquad b(x) \exp\left(- (\phi*\varrho_0)(x) \right) ={\rm const},
\]
and thus should take into account the interplay between $b$ and
$\phi$. If $b^{+}>b^{-}$, then the function $\omega_-(t)$ is bounded
in time whereas $\omega_{+}(t)$ increases linearly. This resembles
the situation in (\ref{Z2}) with $\alpha<1$. Note that $\phi$ is
Theorem \ref{3tm} in not supposed to satisfy item (c) of Assumption
\ref{ass}. Unfortunately, we failed to get more precise bounds at
the expense of further restriction imposed on $\phi$.

\subsubsection{Theorem \ref{4tm}}

This statement establishes a relationship between the mesoscopic
evolution $\varrho_0\to \varrho_t$ obtained from the kinetic
equation and the Markov evolution of micro-states $\mu_0\to \mu_t$.
Its main message is that the former evolution approximates the
latter one in the sense of Definition \ref{Poidf}. And its main
drawback is that this relationship is restricted in time to $[0,T]$
with $T$ found explicitly, see the the proof of the theorem below.
Here one has to keep in mind that the evolution of the Poisson
states $\pi_{\varrho_0} \to \pi_{\varrho_t}$ corresponding to the
mentioned evolution of the densities need not be Markov, that is, it
cannot be obtained from equations like that in (\ref{1}). The second
thing to be aware of is that $q_{t,\varepsilon}$ in (\ref{rho}) with
$\varepsilon\in (0,1)$  need not be correlation functions. Because
of this, their continuation in time were impossible that yields the
drawback mentioned above.

\section{The Evolution of States}
\label{ESs}

In this section, we prove Theorem \ref{1tm} and Corollary
\ref{M1co}. We begin by proving that (\ref{31}) has a unique
solution on a bounded time interval.
\subsection{Finite time horizon}
For $\vartheta \in \mathds{R}$ and $\vartheta'>\vartheta$, we set
\begin{equation}
  \label{33}
T(\vartheta',\vartheta)= \frac{\vartheta'-\vartheta}{\bar{b}}
\exp\left(\vartheta - \langle \phi \rangle e^{\vartheta'} \right) =
\frac{\vartheta'-\vartheta}{\beta(\vartheta')}e^\vartheta,
\end{equation}
see (\ref{29}).
\begin{lemma}
  \label{F1lm}
For each $\vartheta_0\in \mathds{R}$ and $\vartheta>\vartheta_0$,
the problem in (\ref{31}) with $k_0\in \mathcal{K}_{\vartheta_0}$
has a unique solution on the time interval
$[0,T(\vartheta,\vartheta_0))$.
\end{lemma}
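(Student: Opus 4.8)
The plan is to prove Lemma \ref{F1lm} by a standard Picard-type iteration carried out in the ascending scale of Banach spaces $\{\mathcal{K}_\vartheta\}$, using the bound (\ref{29}) on the norm of $L^\Delta_{\vartheta'\vartheta}$ as the loss-of-regularity estimate. Fix $\vartheta_0$ and $\vartheta>\vartheta_0$, and for a parameter $\vartheta''$ running in the open interval $(\vartheta_0,\vartheta)$ consider the integral reformulation
\[
k_t = k_0 + \int_0^t L^\Delta_{\vartheta'' \vartheta'(s)} k_s \, ds ,
\]
with an appropriately chosen intermediate value $\vartheta'(s)$ between $\vartheta_0$ and $\vartheta''$. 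The iteration $k_t^{(0)} \equiv k_0$, $k_t^{(n+1)} = k_0 + \int_0^t L^\Delta k_s^{(n)} ds$ produces terms each of which one estimates by distributing the total "room" $\vartheta - \vartheta_0$ into $n$ equal slices of width $(\vartheta-\vartheta_0)/n$; applying (\ref{29}) $n$ times and using the elementary bound $(1/(a/n))^n / n! \le (e/a)^n$ after an integration in the time variables yields, for $t < T(\vartheta,\vartheta_0)$,
\[
\| k_t^{(n+1)} - k_t^{(n)} \|_\vartheta \le \|k_0\|_{\vartheta_0}\left(\frac{t}{T(\vartheta,\vartheta_0)}\right)^{\! n} ,
\]
or a similar geometrically summable expression. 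This gives absolute convergence in $\mathcal{K}_\vartheta$, uniformly on compact subintervals of $[0,T(\vartheta,\vartheta_0))$, hence a continuous limit $t\mapsto k_t$.

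Next I would check that the limit is a classical solution in the sense of Definition \ref{1df}. For $t$ in a slightly smaller interval $[0,T(\vartheta'',\vartheta_0))$ with $\vartheta_0 < \vartheta'' < \vartheta$, the same estimates show the series converges in $\mathcal{K}_{\vartheta''}$, so in particular $k_t \in \mathcal{K}_{\vartheta''} \subset \mathcal{D}_\vartheta$ by (\ref{28}); differentiability on the open interval and the identity $\tfrac{d}{dt}k_t = L^\Delta_\vartheta k_t$ then follow by differentiating the integral equation, using that $t\mapsto L^\Delta_{\vartheta\vartheta''} k_t$ is continuous in $\mathcal{K}_\vartheta$ (which is where (\ref{30}) is needed to identify the bounded and unbounded versions of $L^\Delta$). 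Since $\vartheta''<\vartheta$ was arbitrary, this covers all of $[0,T(\vartheta,\vartheta_0))$.

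For uniqueness, suppose $k_t$ and $\tilde k_t$ both solve (\ref{31}) on $[0,T(\vartheta,\vartheta_0))$; the difference $u_t = k_t - \tilde k_t$ satisfies $u_0 = 0$ and $\tfrac{d}{dt}u_t = L^\Delta_\vartheta u_t$. Writing $u_t = \int_0^t L^\Delta u_s\, ds$ and iterating the bound (\ref{29}) with the same slicing argument — now bootstrapping the a priori membership $u_t \in \mathcal{D}_\vartheta$ to $u_t \in \mathcal{K}_{\vartheta - \delta}$ for small $\delta$, and shrinking $\delta$ along the iteration — forces $\|u_t\|_\vartheta = 0$ on the interval. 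The main obstacle is bookkeeping: one must organize the partition of the interval $(\vartheta_0,\vartheta)$ and the corresponding time bounds $T(\cdot,\cdot)$ so that the $n$-fold composition of operators with increasing-type norms $\beta(\vartheta)$ still sums to something finite for every $t<T(\vartheta,\vartheta_0)$ — the subtlety being that $\beta(\vartheta)=\bar b\exp(\langle\phi\rangle e^\vartheta)$ grows doubly exponentially in $\vartheta$, so the precise placement of intermediate values matters, and one should evaluate $\beta$ at the largest space $\vartheta$ (not at the intermediate ones) to get the clean form of $T(\vartheta,\vartheta_0)$ in (\ref{33}). Everything else is routine estimation with the Minlos lemma already packaged into (\ref{25})--(\ref{29}).
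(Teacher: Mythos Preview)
Your existence argument is essentially the paper's: the Picard iterates $k_t^{(n)}$ are exactly the partial sums $\sum_{j=0}^n \tfrac{t^j}{j!}(L^\Delta)^j_{\vartheta\vartheta_0}k_0$ of the exponential series that the paper denotes $Q_{\vartheta\vartheta_0}(t)k_0$, and the equal-slice partition of $(\vartheta_0,\vartheta)$ together with the observation that $\beta$ should be evaluated at the top index $\vartheta$ is precisely how the paper obtains the bound $\|(L^\Delta)^n_{\vartheta\vartheta_0}\|\le (n/e)^n/T(\vartheta,\vartheta_0)^n$. Your treatment of the classical-solution verification via an intermediate $\vartheta''\in(\vartheta_0,\vartheta)$ also matches the paper.

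The uniqueness sketch, however, contains a wrong move. You propose to ``bootstrap the a priori membership $u_t\in\mathcal{D}_\vartheta$ to $u_t\in\mathcal{K}_{\vartheta-\delta}$''. This inclusion is false: $\mathcal{D}_\vartheta$ consists of $k$ with $|k(\eta)|\le C\,e^{\vartheta|\eta|}/(1+|\eta|)$, and the factor $(1+|\eta|)^{-1}$ does not dominate $e^{-\delta|\eta|}$ for any fixed $\delta>0$. So you cannot descend in the scale. The paper's fix is to go \emph{up}: embed $u_t\in\mathcal{K}_\vartheta$ into $\mathcal{K}_{\vartheta'}$ for some $\vartheta'>\vartheta$, write
\[
I_{\vartheta'\vartheta}u_t=\int_0^t L^\Delta_{\vartheta'\vartheta}u_s\,ds
=(L^\Delta)^n_{\vartheta'\vartheta}\int_0^t\!\!\int_0^{t_1}\!\!\cdots\!\!\int_0^{t_{n-1}} u_s\,ds\,dt_{n-1}\cdots dt_1,
\]
and then apply the already-established bound on $(L^\Delta)^n_{\vartheta'\vartheta}$ to get $\|u_t\|_{\vartheta'}\le \tfrac{1}{n!}(n/e)^n\bigl(t/T(\vartheta',\vartheta)\bigr)^n\sup_s\|u_s\|_\vartheta\to 0$. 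The point is that the composite $(L^\Delta)^n_{\vartheta'\vartheta}$ slices the \emph{new} interval $(\vartheta,\vartheta')$, not the original $(\vartheta_0,\vartheta)$; the second solution never visits $\mathcal{K}_{\vartheta_0}$, so there is no room below $\vartheta$ to use. Once you redirect the iteration upward, your argument goes through and coincides with the paper's.
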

\begin{proof}
For an arbitrary $\vartheta_1\in \mathds{R}$ and $\vartheta_2
>\vartheta_1$, by means of (\ref{24}) one can define a bounded operator
$(L^\Delta)^2_{\vartheta_2\vartheta_1} : \mathcal{K}_{\vartheta_1}
\to \mathcal{K}_{\vartheta_2}$ the operator norm of which can be
estimated similarly as in (\ref{25}) and (\ref{26}). Clearly, for
each $\vartheta'\in (\vartheta_1,\vartheta_2)$, it satisfies
\begin{equation}
  \label{34}
  \|(L^\Delta)^2_{\vartheta_2\vartheta_1} \| \leq \frac{\beta
  (\vartheta') \beta (\vartheta_1)e^{-\vartheta'-\vartheta_1}}{e^2 (\vartheta_2 -
  \vartheta')(\vartheta'-\vartheta_1)} \leq \frac{[\beta
  (\vartheta_2)]^2e^{-2\vartheta_1}}{e^2 (\vartheta_2 -
  \vartheta')(\vartheta'-\vartheta_1)}.
\end{equation}
Note that the definition of $(L^\Delta)^2_{\vartheta_2\vartheta_1}$
is independent of the choice of $\vartheta'$, which we use to
estimate the norm of this operator. In a similar way, one defines
$(L^\Delta)^n_{\vartheta_2\vartheta_1}$ for all $n\in  \mathds{N}$.
To estimate its norm, we introduce
\begin{equation*}
  %\label{35}
 \vartheta^l = \vartheta_1 + l\epsilon, \qquad \epsilon = (\vartheta_2 -
 \vartheta_1)/n.
\end{equation*}
Then like in (\ref{34}) we obtain
\begin{equation}
  \label{36}
  \|(L^\Delta)^n_{\vartheta_2\vartheta_1}\| \leq
  \left(\frac{n}{e}\right)^n
  \frac{1}{[T(\vartheta_2,\vartheta_1)]^n}.
\end{equation}
Let us consider $(L^\Delta)^0_{\vartheta_2\vartheta_1}$ as the
corresponding (continuous) embedding operator, cf. (\ref{23}). Then,
for each $\vartheta'\in (\vartheta_1 , \vartheta_2)$, $n\in
\mathds{N}_0$ and $m\leq n$, we have that
\begin{equation}
  \label{36a}
(L^\Delta)^n_{\vartheta_2\vartheta_1} =
(L^\Delta)^m_{\vartheta_2\vartheta'}
(L^\Delta)^{n-m}_{\vartheta'\vartheta_1} =
(L^\Delta)^0_{\vartheta_2\vartheta'}
(L^\Delta)^n_{\vartheta'\vartheta_1} .
\end{equation}
In view of (\ref{28}) and (\ref{30}), these equalities imply
\begin{eqnarray}
  \label{36b}
  & & \forall n\in \mathds{N}_0 \qquad
  (L^\Delta)^n_{\vartheta_2\vartheta_1}: \mathcal{K}_{\vartheta_1}
  \to \mathcal{D}_{\vartheta_2}, \\[.2cm] \nonumber  & & \forall n\in \mathds{N} \qquad
  (L^\Delta)^n_{\vartheta_2\vartheta_1} = L^\Delta_{\vartheta_2}
  (L^\Delta)^{n-1}_{\vartheta_2\vartheta_1}.
\end{eqnarray}
Consider
\begin{equation}
  \label{37}
Q_{\vartheta_2 \vartheta_1} (t) = \sum_{n\geq 0}\frac{t^n}{n!}
(L^\Delta)^n_{\vartheta_2\vartheta_1}.
\end{equation}
By (\ref{36}) we conclude that the series in (\ref{37}) converges in
the operator norm topology -- uniformly on compact subsets of
$[0,T(\vartheta_2, \vartheta_1))$ -- and thus defines a continuous
function
\begin{equation}
  \label{37a}
[0,T(\vartheta_2, \vartheta_1)) \ni t \mapsto Q_{\vartheta_2
\vartheta_1} (t) \in \mathcal{C}_{\vartheta_2\vartheta_1},
\end{equation}
where $\mathcal{C}_{\vartheta_2\vartheta_1}$ denotes that Banach
space of all bounded linear operators acting from
$\mathcal{K}_{\vartheta_1}$ to $\mathcal{K}_{\vartheta_2}$. Its
operator norm can be estimated with the help of (\ref{36}), that
yields
\begin{equation}
  \label{37b}
 \|Q_{\vartheta_2
\vartheta_1} (t)\| \leq \frac{T(\vartheta_2,
\vartheta_1)}{T(\vartheta_2, \vartheta_1)-t}.
\end{equation}
In a similar way, by (\ref{36a}) and the second equality in
(\ref{36b}) one obtains that
\begin{equation}
  \label{38}
\forall t \in (0, T(\vartheta_2, \vartheta_1)) \qquad   \frac{d}{dt}
Q_{\vartheta_2 \vartheta_1} (t) = L^\Delta_{\vartheta_2\vartheta'}
Q_{\vartheta' \vartheta_1}
  (t) =
  L^\Delta_{\vartheta_2} Q_{\vartheta_2 \vartheta_1} (t),
\end{equation}
where the choice of $\vartheta'\in (\vartheta_1, \vartheta_2)$
depends on the value of $t$ and should be made in such a way that
$t<T(\vartheta',\vartheta_1)$, which is possible since
$T(\vartheta',\vartheta_1)$ continuously depends on $\vartheta'$. By
(\ref{38}) we conclude that the map in (\ref{37a}) is continuously
differentiable on $(0, T(\vartheta_2, \vartheta_1))$. We combine all
these facts and obtain that
\begin{equation}
  \label{39}
 k_t = Q_{\vartheta \vartheta_0} (t) k_0
\end{equation}
is the solution in question, see Definition \ref{1df}. Assume now
that $\hat{k}_t$ is another solution of (\ref{31}) with the same
initial condition. Then $u_t = \hat{k}_t - k_t\in \mathcal{K}_\vartheta$ also solves this
problem but with the zero initial condition. Let $\tau\geq 0$ be such that
$u_t = 0$ for $t\leq \tau$. Take $t\in(\tau, T(\vartheta,\vartheta_0))$ and write
\begin{equation}
 \label{40}
 I_{\vartheta'\vartheta}u_t = \int_\tau^t L^\Delta_{\vartheta'\vartheta} u_s ds = L^\Delta_{\vartheta'\vartheta}\int_\tau^t u_s ds,
\end{equation}
which obviously holds with any $\vartheta'>\vartheta$. Here
$I_{\vartheta'\vartheta}:\mathcal{K}_{\vartheta} \to
\mathcal{K}_{\vartheta'}$ is the embedding operator, cf. (\ref{23}).
Now we repeat in (\ref{40}) the same equality for $u_s$, and then
repeat this again due times to get
\begin{eqnarray}
 \label{41}
 I_{\vartheta'\vartheta}u_t & = & ( L^\Delta)^2_{\vartheta'\vartheta}\int_\tau^t \left( \int_\tau^{t_1} u_s ds \right) d t_1\\[.2cm] \nonumber & = &
 ( L^\Delta)^n_{\vartheta'\vartheta}\int_\tau^t\int_\tau^{t_1} \cdots \int_\tau^{t_{n-1} }\left( \int_\tau^{t_1} u_s ds \right) d t_1 \cdots d t_{n-1},
\end{eqnarray}
with an arbitrary $n\in \mathds{N}$.
Then by means of (\ref{36}) we obtain from (\ref{41}) that
\[
\|
I_{\vartheta'\vartheta}u_t\|_{\vartheta'} \leq \frac{1}{n!} \left(\frac{n}{e} \right)^n \left(\frac{t- \tau}{T(\vartheta', \vartheta)}\right)^n
\sup_{s\in [\tau, t]} \|u_s\|_\vartheta.
\]
Since $n$ here is an arbitrary integer, we have that
$I_{\vartheta'\vartheta}u_t = 0$ (hence, $u_t=0$) for all those
values of $t\in(\tau, T(\vartheta,\vartheta_0))$, for which $t-
\tau<T(\vartheta', \vartheta)$ with a given $\vartheta'>\vartheta$.
In such a way, one proves that $u_t=0$ for all $t\in[0,
T(\vartheta,\vartheta_0))$, which completes the whole proof.
\end{proof}
\begin{remark}
 \label{Frk}
Since $Q_{\vartheta_2\vartheta_1}(t)$ is defined by the series in
(\ref{37}), by (\ref{30}) one can prove that, for each $\vartheta_1'
\leq \vartheta_1$ and $\vartheta_2' \in(\vartheta_1', \vartheta_2)$,
the follows holds
\[
 \forall k\in \mathcal{K}_{\vartheta_1'} \qquad Q_{\vartheta_2 \vartheta_1} (t) k= Q_{\vartheta_2' \vartheta_1'} (t) k,
\]
whenever $t < \min\{T(\vartheta_2, \vartheta_1); T(\vartheta_2', \vartheta_1')\}$.
\end{remark}

\subsection{The identification}

The main result of this section is given in the following statement.
Recall that $\mathcal{K}^\star_\vartheta$ is defined in (\ref{32}).
\begin{lemma}
  \label{F2lm}
For each $t<T(\vartheta_2,\vartheta_1)/2$, the operator defined in
(\ref{37}) has the property $Q_{\vartheta_2\vartheta_1}(t) :
\mathcal{K}^\star_{\vartheta_1} \to
\mathcal{K}^\star_{\vartheta_2}$.
\end{lemma}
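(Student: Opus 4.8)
The plan is to show that the operator $Q_{\vartheta_2\vartheta_1}(t)$, given by its defining series in (\ref{37}), maps the cone $\mathcal{K}^\star_{\vartheta_1}$ into $\mathcal{K}^\star_{\vartheta_2}$ for all sufficiently small $t$. Fix $k_0 \in \mathcal{K}^\star_{\vartheta_1}$ and set $k_t = Q_{\vartheta_2\vartheta_1}(t) k_0$. By Proposition \ref{Tobipn}, it suffices to verify that $k_t(\varnothing)=1$ and that $\langle\!\langle G, k_t\rangle\!\rangle \geq 0$ for every $G \in B^\star_{\rm bs}(\Gamma_0)$. The first property is easy: inspection of the formula for $L^\Delta$ in (\ref{24}) shows $(L^\Delta k)(\varnothing)=0$ (the sum over $x\in\eta$ is empty when $\eta=\varnothing$), so $k_t(\varnothing) = k_0(\varnothing) = 1$ for all $t$. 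The real content is the positivity of the pairing.

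First I would reformulate the positivity condition on the \emph{predual} side. Testing $k_t$ against $G\in B^\star_{\rm bs}(\Gamma_0)$ amounts to running the backward (Kolmogorov) evolution on the test function: one wants $\langle\!\langle G, Q_{\vartheta_2\vartheta_1}(t)k_0\rangle\!\rangle = \langle\!\langle Q^\dagger(t) G, k_0\rangle\!\rangle$, where $Q^\dagger(t)$ is built from the adjoint $\widehat{L}$ of $L^\Delta$ with respect to the pairing $\langle\!\langle\cdot,\cdot\rangle\!\rangle$. Using the Minlos lemma (\ref{Minlos}) exactly as in the Appendix derivation of (\ref{24}), one computes $\widehat{L}$ explicitly: it should act on $G:\Gamma_0\to\mathds{R}$ by $(\widehat{L}G)(\eta) = \int_{\mathds{R}^d} b(x)\big[ \sum_{\xi\subseteq\eta} e(t_x;\eta\setminus\xi)\, G(\xi\cup x) - |\eta|\text{-type terms}\big]\,dx$ — the precise form being the operator conjugate to the birth generator $L$ of (\ref{17}) acting on quasi-observables. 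The key structural fact I would establish is that $\widehat{L}$ is the generator of a \emph{positive} (indeed sub-Markovian, on the finite-configuration side) evolution on the cone $B^\star_{\rm bs}(\Gamma_0)$, equivalently that $L$ acting on observables preserves positivity in the appropriate sense. This is a general feature of birth-type generators: adding a particle with nonnegative rate $b(x)e^{-\sum_y\phi(x-y)} \geq 0$ produces a positivity-preserving flow. Concretely, I would show $\widehat{L} = \widehat{L}_0 - M$ where $\widehat{L}_0$ is positivity-improving on $B^\star_{\rm bs}$ and $M$ is a nonnegative multiplication-type operator, whence $Q^\dagger(t) = e^{t\widehat L}$ preserves the cone by the standard Dyson/Duhamel expansion with all terms nonnegative.

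The bound $t < T(\vartheta_2,\vartheta_1)/2$ enters because the series and its rearrangements only converge in the relevant operator norms on that time interval — the halving is the usual loss incurred when one wants a \emph{single} intermediate space to host both the forward estimate (\ref{36}) and the adjoint expansion simultaneously (split $[\vartheta_1,\vartheta_2]$ at its midpoint and use $T(\vartheta', \vartheta_1), T(\vartheta_2,\vartheta') \geq T(\vartheta_2,\vartheta_1)/2$ for $\vartheta'$ the midpoint, by the explicit monotonicity of $T$ in (\ref{33})). I anticipate the main obstacle to be the rigorous manipulation of the adjoint series: $B^\star_{\rm bs}(\Gamma_0)$ is not a Banach space and $G\in B^\star_{\rm bs}$ has bounded support, so one must check that $\widehat L^n G$ stays integrable against $k_0 \in \mathcal{K}_{\vartheta_1}$ with summable bounds — this is where the estimates of type (\ref{25})–(\ref{26}) are redeployed, now on the $G$-side, controlling the growth of $\Lambda(\widehat L^n G)$ and $N(\widehat L^n G)$. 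An alternative route, which I would pursue if the adjoint bookkeeping proves unwieldy, is to approximate: solve the problem with $b$ and $\phi$ truncated to a box so that $L^\Delta$ becomes bounded on a single $\mathcal{K}_\vartheta$, invoke the elementary fact that $e^{tL^\Delta_{\rm trunc}}$ preserves $\mathcal{K}^\star_\vartheta$ (bounded generator, cone preserved since $L^\Delta_{\rm trunc}$ has the Arendt–Batty–type positivity structure), and then pass to the limit using the norm convergence already packaged in Remark \ref{Frk} and Lemma \ref{F1lm}, the cone $\mathcal{K}^\star_\vartheta$ being norm-closed in $\mathcal{K}_\vartheta$.
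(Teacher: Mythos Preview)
Your high-level intuition is right --- one wants to transport the positivity of the underlying birth process through the $K$-transform to conclude that the predual evolution preserves $B^\star_{\rm bs}(\Gamma_0)$ --- but the proposal glosses over precisely the obstacle that makes this lemma hard, and your alternative route contains a false claim.

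\textbf{The gap in the first approach.} You assert that ``$Q^\dagger(t)=e^{t\widehat L}$ preserves the cone by the standard Dyson/Duhamel expansion with all terms nonnegative''. The natural way to see cone-preservation is \emph{not} on the $\widehat L$-side directly but via the intertwining $K\widehat L = LK$ (the paper's (\ref{50b})), which reduces the question to positivity of the Kolmogorov/Fokker--Planck semigroup generated by $L$ (resp.\ $L^\dagger$) on observables (resp.\ densities). Here is the obstacle you miss: the Fokker--Planck generator $L^\dagger$ has a killing part $-\Psi(\eta)R(\eta)$ with $\Psi(\eta)=\int_{\mathds{R}^d} b(x)\,e(\tau_x;\eta)\,dx$, and this integral is \emph{infinite} unless $b$ is integrable (which the paper explicitly does not assume). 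So there is no stochastic semigroup $S^\dagger$ to invoke, and the splitting ``$\widehat L=\widehat L_0 - M$'' you propose cannot be made to work with $M$ a bona fide multiplication operator. The paper cures this by the Gaussian cut-off $b^\sigma(x)=b(x)e^{-\sigma|x|^2}$, builds the genuine stochastic semigroup $S^\dagger$ in $\mathcal G_0$ (using a Thieme--Voigt argument to control it in $\mathcal G_\vartheta$), and then has to remove the cut-off. Even with $b^\sigma$, the identity $\langle\!\langle G, Q^\sigma_{\vartheta\vartheta_0}(t)q_0\rangle\!\rangle=\langle\!\langle KG, S^\dagger(t)R_0\rangle\!\rangle$ is not formal: the two sides live in different analytic regimes (one a power series with factorial-type bounds, the other a bounded-generator semigroup), and the paper matches them via a Denjoy--Carleman quasi-analyticity argument after first approximating $k_0$ by a finite-particle density $q_0^{\Lambda,N}$ (so that $R_0^{\Lambda,N}$ is a legitimate element of $\mathcal G_0$). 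None of these three ingredients --- regularization of $b$, approximation of $k_0$ by local densities, and the quasi-analytic matching --- appears in your sketch, and each is essential.

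\textbf{The false claim in the alternative.} Truncating $b$ and $\phi$ to a box does \emph{not} make $L^\Delta$ bounded on a single $\mathcal K_\vartheta$: the unboundedness in (\ref{26}) comes from the factor $|\eta|$ in $\sum_{x\in\eta}$, which is intrinsic to a birth operator and independent of the supports of $b$ and $\phi$. So there is no single-space exponential $e^{tL^\Delta_{\rm trunc}}$ to appeal to. What truncating $b$ \emph{does} buy (and this is essentially the paper's $\sigma$-regularization) is a well-defined stochastic Fokker--Planck semigroup on densities, which lives in the $L^1$-type spaces $\mathcal G_\vartheta$, not in $\mathcal K_\vartheta$.

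\textbf{On the factor $1/2$.} Your midpoint-splitting heuristic is close in spirit but misplaced: in the paper the halving arises specifically in Lemma \ref{G2lm}, where removing the regularization $\sigma\to 0^+$ requires a Duhamel formula $Q-Q^\sigma=\int_0^t Q^\sigma(t-s)\,D^\sigma\,Q(s)\,ds$ with two intermediate indices $\vartheta_1<\vartheta_2$ in $(\vartheta_0,\vartheta)$, forcing $t<\min\{T(\vartheta_1,\vartheta_0),T(\vartheta,\vartheta_2)\}$; optimizing this gives the bound $t<T(\vartheta,\vartheta_0)/2$.
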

By this lemma it will follow that the solution $k_t$ obtained in
Lemma \ref{F1lm} for $t<T(\vartheta_2,\vartheta_1)/2$ satisfies all
the conditions of Proposition \ref{Tobipn} -- and hence is the
correlation function of a unique $\mu\in \mathcal{P}_{\rm
exp}(\Gamma)$ -- whenever $k_0$ has the same property. Its proof is
performed in a number of steps. First we introduce an auxiliary
model by replacing $b(x)$ by and integrable  $b^\sigma(x)$,
$\sigma>0$. For this new model, by repeating the steps made to prove
Lemma \ref{F1lm} we construct $Q^\sigma_{\vartheta_2\vartheta_1}$,
for which we prove that $Q^\sigma_{\vartheta_2\vartheta_1}(t) :
\mathcal{K}^\star_{\vartheta_1} \to
\mathcal{K}^\star_{\vartheta_2}$. Then we show that
\begin{equation}
  \label{42}
  \langle \!\langle G,Q^\sigma_{\vartheta_2\vartheta_1} (t) k\rangle
  \!\rangle \to \langle \!\langle G,Q_{\vartheta_2\vartheta_1} (t) k\rangle
  \!\rangle, \qquad {\rm as}  \ \ \sigma \to 0^+.
\end{equation}

\subsubsection{An auxiliary model}

For $\sigma >0$, we set
\begin{equation}
  \label{43}
  \psi_\sigma (x) = \exp\left( - \sigma |x|^2 \right), \qquad
  b^\sigma (x) = b(x) \psi_\sigma (x).
\end{equation}
Let $L^{\Delta,\sigma}$ stand for the corresponding operator defined
in (\ref{24}) with $b$ replaced by $b^\sigma$. Clearly, this
$L^{\Delta,\sigma}$ satisfies the estimates in (\ref{29}) and
(\ref{36}) with the same right-hand sides, that allows one to define
$Q_{\vartheta_2\vartheta_1}^\sigma (t)$ by the series as in
(\ref{37}) with $L^{\Delta}$ replaced by $L^{\Delta,\sigma}$.
Likewise, one defines also $L^{\Delta,\sigma}_\vartheta$ and
$L^{\Delta,\sigma}_{\vartheta'\vartheta}$. Note that the operator
norm of $Q_{\vartheta_2\vartheta_1}^\sigma (t)$ satisfies
(\ref{37b}) with the same right-hand side.

The Banach space predual to $\mathcal{K}_\vartheta$ is
\begin{equation}
  \label{44}
  \mathcal{G}_\vartheta =\{ G:\Gamma_0\to \mathds{R}:
  |G|_\vartheta<\infty\}, \qquad |G|_\vartheta :=\int_{\Gamma_0}
  |G(\eta)|e^{\vartheta|\eta|} \lambda ( d\eta).
\end{equation}
Like in (\ref{23}) we then have
\begin{equation}
  \label{44a}
\mathcal{G}_{\vartheta'} \hookrightarrow \mathcal{G}_\vartheta,
\qquad {\rm for} \ \ \vartheta'>\vartheta.
\end{equation}
Let $\widehat{L}^\sigma$ be defined by the relation
\begin{equation}
  \label{46}
    \langle \!\langle \widehat{L}^\sigma G, k\rangle
  \!\rangle = \langle \!\langle G, L^{\Delta,\sigma }k\rangle
  \!\rangle .
\end{equation}
Then we set, cf. (\ref{27}),
\begin{equation*}
  %\label{47}
 \widehat{D}_\vartheta = \{ G\in \mathcal{G}_\vartheta: |\cdot |G
 \in  \mathcal{G}_\vartheta\}.
\end{equation*}
This allows one to define
$\widehat{L}^\sigma_\vartheta=(\widehat{L}^\sigma,
\widehat{D}_\vartheta)$, and also bounded operators
$(\widehat{L}^\sigma)^n_{\vartheta\vartheta'}:
\mathcal{G}_{\vartheta'} \to \mathcal{G}_\vartheta$, $n\in
\mathds{N}$, the operator norms of which satisfy the estimates in
(\ref{36}),  with the same right-hand side. Thereafter, we define,
cf. (\ref{37}),
\begin{equation}
  \label{48}
  H^\sigma_{\vartheta_1 \vartheta_2} (t) = \sum_{n\geq
  0}\frac{t^n}{n!}(\widehat{L}^\sigma)^n_{\vartheta_1\vartheta_2},
  \qquad \vartheta_2>\vartheta_1.
\end{equation}
Like in the proof of Lemma \ref{F1lm} one shows that the series in
(\ref{48}) defines a continuous map
\begin{equation}
  \label{49}
[0,T(\vartheta_2, \vartheta_1)) \ni t \mapsto H^\sigma_{\vartheta_1
\vartheta_2} (t) \in \widehat{\mathcal{C}}_{\vartheta_1\vartheta_2},
\end{equation}
where $\widehat{\mathcal{C}}_{\vartheta_1\vartheta_2}$ is the Banach
space of all bounded linear operators acting from
$\mathcal{G}_{\vartheta_2}$ to $\mathcal{G}_{\vartheta_1}$. The
operator norm of $H_{\vartheta_1\vartheta_2}^\sigma (t)$ satisfies
(\ref{37b}) with the same right-hand side. By the very construction,
cf. (\ref{46}), we have that
\begin{equation}
  \label{50}
  \forall t \in [0,T(\vartheta_2, \vartheta_1)) \qquad     \langle \!\langle H^\sigma_{\vartheta_1 \vartheta_2} (t) G, k\rangle
  \!\rangle = \langle \!\langle G, Q^\sigma_{\vartheta_2\vartheta_1}
  (t)k\rangle,
  \!\rangle .
\end{equation}
holding for all $k\in \mathcal{K}_{\vartheta_1}$ and $G\in
\mathcal{G}_{\vartheta_2}$.

In the sequel, we will use the following property of
$\widehat{L}^\sigma$. To describe it, we derive from (\ref{46}) and
(\ref{24}), (\ref{25}) the explicit form of this operator. To this
end, we employ (\ref{Minlos}) and obtain
\begin{equation}
  \label{50a}
( \widehat{L}^\sigma G)(\eta) = \int_{\mathds{R}^d} b^\sigma (x)
\sum_{\xi \subset \eta} e(t_x;\xi) e(\tau_x;\eta\setminus
\xi)G(\eta\setminus \xi \cup x) d x.
\end{equation}
Note that, for a broad class of functions $G:\Gamma_0 \to
\mathds{R}$, e.g., for $G\in B_{\rm bs}(\Gamma_0)$, the computations
in (\ref{50a}) can be performed explicitly as the integral is
convergent and the sum is finite. In Appendix, we prove that
\begin{equation}
  \label{50b}
  L^\sigma K G = K \widehat{L}^\sigma G,
\end{equation}
where $K$ is defined in (\ref{7z}). Then, for each $G\in B_{\rm
bs}(\Gamma_0)$ and $n\in \mathds{N}$, we obtain from (\ref{50b}) the
following
\begin{equation*}
 % \label{50c}
(L^\sigma)^n K G = K (\widehat{L}^\sigma)^n G
\end{equation*}

\subsubsection{Approximations}

Now we aim at proving that
\begin{equation}
  \label{51}
\forall \sigma >0 \quad \forall t\in [0,T(\vartheta_2, \vartheta_1))
\qquad   \langle \!\langle G,Q^\sigma_{\vartheta_2\vartheta_1} (t)
k\rangle
  \!\rangle \geq 0,
\end{equation}
whenever $k\in \mathcal{K}_{\vartheta_1}^\star$ and $G\in
B^\star_{\rm bs}(\Gamma_0)$. Note that $B_{\rm bs}(\Gamma_0)\subset
\mathcal{G}_\vartheta$ for any real $\vartheta$, see Definition
\ref{Gdef}. To prove (\ref{51}) we approximate $k$ by
$k^{\Lambda,N}$, $\Lambda$ and $N$ being a compact subset of
$\mathds{R}^d$ and and integer, respectively. The meaning of this is
that $Q^\sigma_{\vartheta_2\vartheta_1} (t) k^{\Lambda,N}$ has the
desired positivity by construction. Then the proof of (\ref{51})
will be done by showing the corresponding convergence as $\Lambda
\to \mathds{R}^d$ and $N\to +\infty$.

For a compact $\Lambda$, let $\Gamma_\Lambda$ denote the set of all
$\gamma\in \Gamma$ such that $\gamma\subset \Lambda$. Clearly,
$\Gamma_\Lambda \subset \Gamma_0$ and $\Gamma_\Lambda \in
\mathcal{B}(\Gamma)$. Set $\mathcal{B}(\Gamma_\Lambda)= \{ A\in
\mathcal{B}(\Gamma): A\subset \Gamma_\Lambda\}$ and $p_\Lambda
(\gamma) = \gamma\cap\Lambda$, $\gamma\in \Gamma$. Then
\begin{equation}
  \label{52}
  \mu^\Lambda (A) = \mu(p_\Lambda^{-1} (A) ), \qquad A \in
  \mathcal{B }(\Gamma_\Lambda)
\end{equation}
defines a probability measure on the measurable space
$(\Gamma_\Lambda, \mathcal{B }(\Gamma_\Lambda))$ -- the projection
of $\mu$ on $\Gamma_\Lambda$. Here $p_\Lambda^{-1} (A) =\{ \gamma
\in \Gamma: p_\Lambda (\gamma)\in A\}$. It is possible to show, see
\cite{Tobi}, that for each  $\mu\in \mathcal{P}_{\rm exp}(\Gamma)$
and any compact $\Lambda$, $\mu^\Lambda$ is absolutely continuous
with respect to $\lambda$. For $\mu\in \mathcal{P}_{\rm
exp}(\Gamma)$, let $R^\Lambda_\mu:\Gamma_\Lambda \to \mathds{R}$ be
the corresponding Radon-Nikodym derivative. The the correlation
function $k_\mu$ and $R^\Lambda_\mu$ are related to each other by
\begin{equation}
  \label{52a}
  k_\mu (\eta) = \int_{\Gamma_\Lambda} R^\Lambda_\mu (\eta \cup
  \xi)\lambda ( d \xi), \quad \eta \in \Gamma_\Lambda.
\end{equation}
For a given $N\in \mathds{N}$ and $\eta\in \Gamma_0$, we then set
\begin{equation}
  \label{53}
R^{\Lambda,N}_0 (\eta) =\left\{\begin{array}{ll} R^\Lambda_\mu
(\eta) \qquad &{\rm if}
\quad \eta \in \Gamma_\Lambda \ {\rm and } \ |\eta|\leq N;\\[.3cm] 0
\qquad &{\rm otherwise}.
\end{array} \right.
\end{equation}
Clearly, we have that $R^{\Lambda,N}_0 (\eta) \geq 0$ and
$R^{\Lambda,N}_0 \in \mathcal{G}_\vartheta$ for any $\vartheta\in
\mathds{R}$. Set, cf. (\ref{52a}),
\begin{equation}
 \label{54}
 q_0^{\Lambda,N} (\eta) = \int_{\Gamma_0} R_0^{\Lambda,N} (\eta\cup \xi) \lambda ( d \xi), \quad \eta \in \Gamma_0.
\end{equation}
Then, for $G\in B^\star_{\rm bs}(\Gamma_0)$, by (\ref{7z}) and (\ref{Minlos}) we have that
\begin{equation}
 \label{55}
\langle \! \langle G, q^{\Lambda,N}_0 \rangle \! \rangle = \int_{\Gamma_0} (KG)(\eta) R^{\Lambda,N}_0 (\eta) \lambda (d\eta) \geq 0.
\end{equation}
At the same time, by (\ref{52a}), (\ref{53}) and (\ref{54}) we have
\begin{equation}
  \label{55a}
0\leq  q_0^{\Lambda,N} (\eta) \leq k_\mu(\eta),
\end{equation}
holding for $\lambda$-almost all $\eta \in \Gamma_0$.

By (\ref{53}) $R^{\Lambda,N}_0$ (up to normalization) is the density
of a state in which the number of entities does not exceed $N$, and
hence is finite. We allow it to evolve $R^{\Lambda,N}_0\to
R^{\Lambda,N}_t$, where $R^{\Lambda,N}_t$ is obtained from the
Fokker-Planck equation
\begin{equation*}
 %\label{FP}
 \frac{d}{dt} R^{\Lambda,N}_t = L^\dagger R^{\Lambda,N}_t, \qquad R^{\Lambda,N}_t|_{t=0} = R^{\Lambda,N}_0,
\end{equation*}
where $L^\dagger$ is defined by
\begin{equation}
 \label{56}
 \int_{\Gamma_0} (L^\sigma F)(\eta) R(\eta) \lambda(d \eta) =  \int_{\Gamma_0} F(\eta)(L^\dag R)(\eta) \lambda(d \eta).
\end{equation}
In Appendix, we show that
\begin{gather}
 \label{57}
 L^\dag = A + B,\\[.2cm] \nonumber
 (A R) (\eta) = - \Psi_\sigma (\eta) R(\eta), \quad \Psi_\sigma (\eta):=
 \int_{\mathds{R}^d} b^\sigma (x) e(\tau_x;\eta) d x,\\[.2cm]\nonumber
 (B R)(\eta) =   \sum_{x\in \eta} b^\sigma (x) e(\tau_x;\eta\setminus x)R(\eta\setminus x).
\end{gather}
Note that the reason to substitute $b$ by $b^\sigma$ is to make convergent the integral in the second line of (\ref{57}).
Note also that according to our assumptions and (\ref{43}) we have that
\begin{equation}
 \label{58}
 \Psi_\sigma (\eta) \leq \bar{b} \left( \frac{\pi}{\sigma}\right)^{d/2},
\end{equation}
which means that $A$ is a bounded multiplication operator. Now we define $L^\dag$ in $\mathcal{G}_0$, introduced in
(\ref{44}) with $\vartheta=0$. For $R\in \mathcal{G}_0$, by (\ref{57}) we get
\begin{gather}
 \label{59}
 |BR|_0 \leq \int_{\Gamma_0} \left( \sum_{x\in \eta} b^\sigma (x) e(\tau_x;\eta\setminus x) |R(\eta\setminus x)|\right)|
 \lambda ( d\eta)\\[.2cm] \nonumber
 = \int_{\Gamma_0} \Psi_\sigma (\eta)| R(\eta)| \lambda (d \eta) \leq \bar{b} \left( \frac{\pi}{\sigma}\right)^{d/2} |R|_0.
\end{gather}
This means that $L^\dag$ is bounded and $\|L^\dag\| \leq 2
\bar{b}(\pi/\sigma)^{d/2}$. Let $\mathcal{G}_0^{+}$ be the cone of
positive elements of $\mathcal{G}_0$. For $R\in \mathcal{G}_0^{+}$,
by making the same calculations as in (\ref{59}) we obtain
\begin{equation*}
  %\label{60}
  \int_{\Gamma_0} (L^\dag R)(\eta) \lambda ( d \eta) = 0
\end{equation*}
Therefore, $L^\dag$ generates a stochastic semigroup $S^\dag =
\{S^\dag (t)\}_{t \geq 0}$ on $\mathcal{G}_0$. This semigroup has a useful property which we
describe now. Recall that $\mathcal{G}_\vartheta$ is defined in (\ref{44}) and that $\mathcal{G}_\vartheta\hookrightarrow \mathcal{G}_0$ whenever $\vartheta>0$,
see (\ref{44a}).
\begin{lemma}
 \label{60lm}
For  each $\vartheta>0$ and $t>0$, we have that $S^\dag (t) : \mathcal{G}_\vartheta \to \mathcal{G}_\vartheta$.
\end{lemma}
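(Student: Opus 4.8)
The plan is to show that $S^\dagger(t)$ leaves each $\mathcal{G}_\vartheta$, $\vartheta>0$, invariant by obtaining a differential inequality for $|S^\dagger(t)R|_\vartheta$ when $R\in\mathcal{G}_\vartheta^+$ (the general case follows by splitting $R=R_+-R_-$ and using positivity of the semigroup). Since $L^\dagger=A+B$ is bounded on $\mathcal{G}_0$, the semigroup is given by the norm-convergent exponential series $S^\dagger(t)=e^{tL^\dagger}$, and $R_t:=S^\dagger(t)R$ is the classical solution of $\dot R_t=L^\dagger R_t$ in $\mathcal{G}_0$. The key point is that $B$ raises the weight while $A$ lowers it in a controlled way: for $R\in\mathcal{G}_\vartheta^+$, using (\ref{57}) and the Minlos lemma (\ref{Minlos}) exactly as in the computation (\ref{59}) but with the weight $e^{\vartheta|\eta|}$ inserted,
\begin{equation*}
\int_{\Gamma_0}(BR)(\eta)e^{\vartheta|\eta|}\lambda(d\eta)
=\int_{\Gamma_0}\Big(\sum_{x\in\eta}b^\sigma(x)e(\tau_x;\eta\setminus x)R(\eta\setminus x)\Big)e^{\vartheta|\eta|}\lambda(d\eta)
=\int_{\Gamma_0}e^\vartheta\Psi_\sigma(\eta)R(\eta)e^{\vartheta|\eta|}\lambda(d\eta),
\end{equation*}
whereas $\int_{\Gamma_0}(AR)(\eta)e^{\vartheta|\eta|}\lambda(d\eta)=-\int_{\Gamma_0}\Psi_\sigma(\eta)R(\eta)e^{\vartheta|\eta|}\lambda(d\eta)$. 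Hence for $R\in\mathcal{G}_\vartheta^+$,
\begin{equation*}
\int_{\Gamma_0}(L^\dagger R)(\eta)e^{\vartheta|\eta|}\lambda(d\eta)=(e^\vartheta-1)\int_{\Gamma_0}\Psi_\sigma(\eta)R(\eta)e^{\vartheta|\eta|}\lambda(d\eta)\le(e^\vartheta-1)\bar b(\pi/\sigma)^{d/2}|R|_\vartheta,
\end{equation*}
using the bound (\ref{58}) on $\Psi_\sigma$.

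Next I would verify that positivity is preserved along the flow in $\mathcal{G}_\vartheta$ as well, so that the above estimate applies to $R_t$ for all $t$: since $S^\dagger$ is a stochastic semigroup on $\mathcal{G}_0$ it maps $\mathcal{G}_0^+$ into itself, and $\mathcal{G}_\vartheta^+=\mathcal{G}_\vartheta\cap\mathcal{G}_0^+$, so $R\in\mathcal{G}_\vartheta^+$ gives $R_t\in\mathcal{G}_0^+$; the invariance of $\mathcal{G}_\vartheta$ itself is what we are proving, so the cleanest route is first to establish the a priori bound for $R$ in a dense subset where everything is finite — say $R\in B_{\rm bs}(\Gamma_0)\cap\mathcal{G}_0^+$, on which $|R_t|_\vartheta<\infty$ is automatic because $B_{\rm bs}$ is $S^\dagger(t)$-invariant (the series defining $S^\dagger(t)R$ only involves configurations inside a fixed compact with bounded cardinality when $R$ has bounded support, since each application of $B$ adds one point drawn against $b^\sigma$ which is integrable, and $A$ is multiplication — one checks $B_{\rm bs}$ is preserved from the explicit form (\ref{57})). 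On that dense set, $t\mapsto|R_t|_\vartheta$ is finite and differentiable with $\frac{d}{dt}|R_t|_\vartheta=\int_{\Gamma_0}(L^\dagger R_t)e^{\vartheta|\eta|}\lambda(d\eta)\le(e^\vartheta-1)\bar b(\pi/\sigma)^{d/2}|R_t|_\vartheta$, so by Gronwall
\begin{equation*}
|S^\dagger(t)R|_\vartheta\le e^{(e^\vartheta-1)\bar b(\pi/\sigma)^{d/2}\,t}\,|R|_\vartheta,\qquad R\in B_{\rm bs}(\Gamma_0)\cap\mathcal{G}_0^+.
\end{equation*}

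Finally I would remove the restriction to the dense set: an arbitrary $R\in\mathcal{G}_\vartheta$ decomposes as $R=R_+-R_-$ with $R_\pm\in\mathcal{G}_\vartheta^+$, and each $R_\pm$ is an increasing $|\cdot|_\vartheta$-limit of functions in $B_{\rm bs}(\Gamma_0)\cap\mathcal{G}_0^+$ (truncate to $\eta\subset\Lambda$, $|\eta|\le N$ and let $\Lambda\uparrow\mathbb{R}^d$, $N\uparrow\infty$ exactly as in (\ref{53})); since $S^\dagger(t)$ is bounded and positive on $\mathcal{G}_0$ it is continuous for the $|\cdot|_0$-norm, and the uniform-in-truncation bound above together with the monotone convergence theorem gives $S^\dagger(t)R_\pm\in\mathcal{G}_\vartheta$ with the same exponential estimate. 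Hence $S^\dagger(t)R\in\mathcal{G}_\vartheta$, which is the claim, and one gets the bonus quantitative bound $\|S^\dagger(t)\|_{\mathcal{G}_\vartheta\to\mathcal{G}_\vartheta}\le\exp((e^\vartheta-1)\bar b(\pi/\sigma)^{d/2}t)$. The main obstacle is the bookkeeping in the first paragraph: one must be careful that the Minlos-lemma manipulation that turns $\sum_{x\in\eta}b^\sigma(x)e(\tau_x;\eta\setminus x)R(\eta\setminus x)$ against the weight $e^{\vartheta|\eta|}$ into $e^\vartheta\Psi_\sigma R$ against $e^{\vartheta|\eta|}$ is legitimate — this needs the absolute integrability guaranteed by $b^\sigma\in L^1$ and $R\in\mathcal{G}_\vartheta$ — and that the differentiation of $t\mapsto|R_t|_\vartheta$ under the integral is justified, which is where restricting first to $B_{\rm bs}(\Gamma_0)$ pays off.
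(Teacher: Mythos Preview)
Your central computation --- that for $R\in\mathcal G_\vartheta^+$,
\[
\int_{\Gamma_0}(L^\dagger R)(\eta)\,e^{\vartheta|\eta|}\lambda(d\eta)=(e^\vartheta-1)\int_{\Gamma_0}\Psi_\sigma(\eta)R(\eta)\,e^{\vartheta|\eta|}\lambda(d\eta)\le(e^\vartheta-1)\bar b(\pi/\sigma)^{d/2}|R|_\vartheta
\]
--- is exactly the estimate the paper proves. The paper then invokes the Thieme--Voigt perturbation theorem (cited from \cite{asia}) to conclude the invariance of $\mathcal G_\vartheta$, whereas you run a direct Gronwall argument; since all operators here are bounded, these are equivalent routes and your Gronwall also reproduces the bound (\ref{60c}) that the paper uses downstream.

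There is, however, a genuine error in your density step. You claim $B_{\rm bs}(\Gamma_0)$ is $S^\dagger(t)$-invariant because ``each application of $B$ adds one point''. Look at $(BR)(\eta)=\sum_{x\in\eta}b^\sigma(x)e(\tau_x;\eta\setminus x)R(\eta\setminus x)$: for this to be nonzero one needs $\eta\setminus x$ in the support of $R$, so $\eta$ has at most $N(R)+1$ points but the extra point $x$ ranges over all of $\mathrm{supp}\,b^\sigma=\mathds R^d$. The spatial-support condition in Definition~\ref{Gdef} is destroyed, so $BR\notin B_{\rm bs}(\Gamma_0)$ and the invariance fails.

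The fix is simpler than your detour: run your Minlos computation once more with $|R|$ in place of $R$ to get $|BR|_\vartheta\le e^\vartheta\bar b(\pi/\sigma)^{d/2}|R|_\vartheta$ (and trivially $|AR|_\vartheta\le\bar b(\pi/\sigma)^{d/2}|R|_\vartheta$). Thus $L^\dagger$ is bounded on $\mathcal G_\vartheta$ itself, $S^\dagger(t)=e^{tL^\dagger}$ converges there, and your Gronwall inequality applies directly on $\mathcal G_\vartheta^+$ without any dense-subset approximation. The decomposition $R=R_+-R_-$ then finishes as you wrote.
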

\begin{proof}
We employ the Thieme-Voigt perturbation theory in the form adapted
to the present context formulated in \cite[Proposition 3.2, page
421]{asia}. According to item (iv) of this statement, we have to
show that there exist positive $c$ and $\varepsilon$ such that, for
all positive $R\in \mathcal{G}_\vartheta$, the following holds
\begin{gather}
\label{60a}
 \int_{\Gamma_0} e^{\vartheta |\eta|} (L^\dag R) (\eta) \lambda ( d \eta) \leq
c \int_{\Gamma_0} e^{\vartheta |\eta|} R (\eta) \lambda ( d \eta)- \varepsilon  \int_{\Gamma_0} \Psi_\sigma (\eta) R (\eta) \lambda ( d \eta).
\end{gather}
Set $F_\vartheta(\eta) = e^{\vartheta|\eta|}$. Then by (\ref{56}) we have
\begin{eqnarray*}
 %\label{60b}
 {\rm LHS(\ref{60a})}& = & \int_{\Gamma_0} (L^\sigma F_\vartheta)(\eta) R (\eta) \lambda ( d \eta)\\[.2cm] \nonumber
& = & (e^\vartheta -1) \int_{\Gamma_0} \left(\int_{\mathds{R}^d} b^\sigma (x) e(\tau_x;\eta) dx \right)e^{\vartheta|\eta|} R (\eta)
\lambda ( d \eta)\\[.2cm] \nonumber &\leq & \bar{c}(\sigma, \vartheta) \int_{\Gamma_0} e^{\vartheta |\eta|} R (\eta) \lambda ( d \eta).
\end{eqnarray*}
Here $\bar{c}(\sigma, \vartheta):= (e^\vartheta -1) \bar{b} (\pi/\sigma)^d$, cf. (\ref{58}). Then (\ref{60a}) holds with
$\varepsilon\leq  (\sigma/\pi)^d \bar{b}^{-1}$ and $c \geq 1 + \bar{c}(\sigma, \vartheta)$.
\end{proof}
By (\ref{60a}) and the Gr\"onwall inequality we obtain that
\begin{equation}
 \label{60c}
 \int_{\Gamma_0} e^{\vartheta |\eta|} \left(S^\dag (t)R^{\Lambda,N}_0\right) (\eta) \lambda ( d \eta) \leq \exp\bigg{(} \vartheta N +
 \bar{c}(\sigma, \vartheta)t \bigg{)},
\end{equation}
holding for all $t>0$, see (\ref{53}).
\begin{lemma}
  \label{F3lm}
Let $\mu_0\in \mathcal{P}_{\rm exp}(\Gamma)$ and $\vartheta_0\in
\mathds{R}$ be such that $k_{\mu_0}\in \mathcal{K}_{\vartheta_0}$.
Let also $\vartheta > \vartheta_0$ be fixed. Finally, let
$R^{\Lambda,N}_0$ be calculated according to (\ref{54}), (\ref{53})
with this $\mu_0$. Then, for each $G\in B_{\rm bs}(\Gamma_0)$ and
all $t< T(\vartheta, \vartheta_0)$, $\sigma
>0$, compact $\Lambda$ and $N\in \mathds{N}$, the following holds
\begin{equation}
 \label{61}
 \langle \!  \langle G, Q^\sigma_{\vartheta \vartheta_0}(t) q_0^{\Lambda,N} \rangle \! \rangle = \langle \! \langle  KG , S^\dag (t)
 R^{\Lambda,N}_0 \rangle
 \!
 \rangle,
\end{equation}
where $KG$ is as in (\ref{7z}).
\end{lemma}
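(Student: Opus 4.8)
The plan is to prove the identity \eqref{61} by establishing it first for the polynomial parts (finite Taylor sums) of the two one-parameter families of operators and then passing to the limit. Concretely, both sides of \eqref{61} are analytic in $t$ on the relevant interval: the left-hand side because $Q^\sigma_{\vartheta\vartheta_0}(t)$ is given by the norm-convergent series \eqref{37} (with $L^\Delta$ replaced by $L^{\Delta,\sigma}$), so that $\langle\!\langle G, Q^\sigma_{\vartheta\vartheta_0}(t) q_0^{\Lambda,N}\rangle\!\rangle = \sum_{n\ge 0} \frac{t^n}{n!}\langle\!\langle G, (L^{\Delta,\sigma})^n_{\vartheta\vartheta_0} q_0^{\Lambda,N}\rangle\!\rangle$; the right-hand side because $S^\dag$ is a norm-continuous semigroup (its generator $L^\dag$ is bounded, by \eqref{59}), so $S^\dag(t) = \sum_{n\ge 0} \frac{t^n}{n!}(L^\dag)^n$ converges in operator norm on $\mathcal{G}_0$, giving $\langle\!\langle KG, S^\dag(t) R^{\Lambda,N}_0\rangle\!\rangle = \sum_{n\ge 0}\frac{t^n}{n!}\langle\!\langle KG, (L^\dag)^n R^{\Lambda,N}_0\rangle\!\rangle$. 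Hence it suffices to match the series coefficient-by-coefficient, i.e. to show
\[
\langle\!\langle G, (L^{\Delta,\sigma})^n_{\vartheta\vartheta_0} q_0^{\Lambda,N}\rangle\!\rangle = \langle\!\langle KG, (L^\dag)^n R^{\Lambda,N}_0\rangle\!\rangle, \qquad n\in\mathds{N}_0.
\]

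The $n=0$ case is exactly the relation between $q_0^{\Lambda,N}$ and $R_0^{\Lambda,N}$: by \eqref{54} and the Minlos lemma \eqref{Minlos}, $\langle\!\langle G, q_0^{\Lambda,N}\rangle\!\rangle = \int_{\Gamma_0}(KG)(\eta)R_0^{\Lambda,N}(\eta)\lambda(d\eta) = \langle\!\langle KG, R_0^{\Lambda,N}\rangle\!\rangle$, which is the computation already displayed in \eqref{55}. For the inductive step I would use the duality \eqref{46} defining $\widehat{L}^\sigma$ together with the intertwining relations \eqref{50a}--\eqref{50b}, namely $(L^\sigma)^n K G = K(\widehat{L}^\sigma)^n G$, and the defining relation \eqref{56} of $L^\dag$ as the $\mathcal{G}_0$-adjoint of $L^\sigma$. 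Starting from the left-hand side, one moves $(L^{\Delta,\sigma})^n$ across the pairing using \eqref{50} (or iteratively \eqref{46}) to obtain $\langle\!\langle (\widehat{L}^\sigma)^n G, q_0^{\Lambda,N}\rangle\!\rangle$; then apply the $n=0$ identity to rewrite this as $\langle\!\langle K(\widehat{L}^\sigma)^n G, R_0^{\Lambda,N}\rangle\!\rangle$; finally use \eqref{50b} in the form $K(\widehat{L}^\sigma)^n G = (L^\sigma)^n K G$ and then the adjointness \eqref{56} $n$ times to move $(L^\sigma)^n$ back onto $R_0^{\Lambda,N}$ as $(L^\dag)^n$, yielding $\langle\!\langle KG, (L^\dag)^n R_0^{\Lambda,N}\rangle\!\rangle$. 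Summing against $t^n/n!$ closes the argument.

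The main obstacle, and the point where care is needed, is the legitimacy of all these manipulations: the pairings $\langle\!\langle\cdot,\cdot\rangle\!\rangle$ are only genuine dual pairings between $\mathcal{G}_\vartheta$ and $\mathcal{K}_\vartheta$ for matching $\vartheta$, and the operators $L^{\Delta,\sigma}$, $\widehat{L}^\sigma$, $L^\sigma$ all shift the parameter, so one must track the scale of Banach spaces carefully — working on $[0,T(\vartheta,\vartheta_0))$ precisely so that the series \eqref{37} converges in $\mathcal{C}_{\vartheta\vartheta_0}$ and, dually, \eqref{48} converges in $\widehat{\mathcal{C}}_{\vartheta_0\vartheta}$. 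A second delicate point is that $L^\sigma$ acts on \emph{observables} (functions $F$ possibly growing like $e^{\alpha N_\Lambda}$) while $L^\dag$ acts on the finite, compactly supported density $R_0^{\Lambda,N}$; the substitution of $b$ by the integrable $b^\sigma$ is exactly what makes $\Psi_\sigma$ in \eqref{57}, \eqref{58} bounded and hence $L^\dag$ a bounded generator of a norm-continuous (indeed stochastic) semigroup, so that the termwise identification of Taylor coefficients is justified. One should also check that $K(\widehat{L}^\sigma)^nG$ still lies in a space against which $R_0^{\Lambda,N}$ may be integrated — this is fine because $R_0^{\Lambda,N}\in\mathcal{G}_\vartheta$ for \emph{every} $\vartheta$ by \eqref{53}, so no growth restriction on $KG$ or its $\widehat{L}^\sigma$-images is needed. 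Modulo this bookkeeping, \eqref{61} follows.
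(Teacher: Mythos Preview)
Your approach is correct and in fact more elementary than the paper's. The algebraic core --- matching the $n$-th terms via the chain
\[
\langle\!\langle G,(L^{\Delta,\sigma})^n q_0^{\Lambda,N}\rangle\!\rangle
=\langle\!\langle(\widehat{L}^\sigma)^n G,q_0^{\Lambda,N}\rangle\!\rangle
=\langle\!\langle K(\widehat{L}^\sigma)^n G,R_0^{\Lambda,N}\rangle\!\rangle
=\langle\!\langle(L^\sigma)^n KG,R_0^{\Lambda,N}\rangle\!\rangle
=\langle\!\langle KG,(L^\dag)^n R_0^{\Lambda,N}\rangle\!\rangle
\]
--- is exactly what the paper does (see \eqref{66} and the lines following it). The difference lies in how one concludes equality of the two functions of $t$ from the coincidence of all derivatives at $t=0$. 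The paper treats the right-hand side $g_G$ as real analytic (since $L^\dag$ is bounded) but regards the left-hand side $f_G$ only as \emph{quasi-analytic}, invoking the Denjoy--Carleman theorem from the bound $|f_G^{(n)}|\le C_f\,n^n M_f^n$ in \eqref{64}; then $h_G=f_G-g_G$ is quasi-analytic with $h_G^{(n)}(0)=0$ for all $n$, hence vanishes. You instead observe that $f_G$ is itself given by the norm-convergent series \eqref{37} on $[0,T(\vartheta,\vartheta_0))$, so it is genuinely analytic there, and the identity follows by direct comparison of Taylor coefficients. Your route avoids the Denjoy--Carleman machinery entirely; the paper's route would be needed if one only had growth bounds of the form $n^n M^n$ without an underlying convergent series, but here the series is the very definition of $Q^\sigma$.

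One small point to tighten: when you write that the exponential series for $S^\dag(t)$ converges in $\mathcal{G}_0$ and then pair with $KG$, note that $KG$ is not bounded (it grows like $C_G 2^{|\eta|}$), so convergence in $\mathcal{G}_0$ alone does not justify the interchange. What saves this is that the same calculation as in \eqref{59}, carried out with the weight $e^{\omega|\eta|}$, $\omega=\log 2$, shows $L^\dag$ is bounded on $\mathcal{G}_\omega$ as well (with norm $\le(1+e^\omega)\bar{b}(\pi/\sigma)^{d/2}$); hence the exponential series converges in $\mathcal{G}_\omega$, against which $KG$ pairs continuously. Your remark that $R_0^{\Lambda,N}\in\mathcal{G}_\vartheta$ for every $\vartheta$ covers the initial datum, but you need this boundedness of $L^\dag$ on the weighted space to propagate it through the series.
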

\begin{proof}
For a given $G\in B_{\rm bs}(\Gamma_0)$ and $t< T(\vartheta,
\vartheta_0)$, denote
\begin{equation}
  \label{62}
  f_G (t) = \langle \!  \langle G, Q^\sigma_{\vartheta \vartheta_0}(t) q_0^{\Lambda,N} \rangle \!
  \rangle, \quad g_G(t) = \langle \! \langle  KG , S^\dag (t)
 R^{\Lambda,N}_0 \rangle
 \!
 \rangle,
\end{equation}
For this $t$, one finds $\vartheta_1\in (\vartheta_0,\vartheta)$
such that $t< T(\vartheta_1, \vartheta_0)$. Then, for each $n\in
\mathds{N}$, by (\ref{38}) we obtain
\begin{equation}
  \label{63}
f_G^{(n)}(t) := \frac{d^n f_G(t)}{dt^n} = \langle \!  \langle G,
(L^{\Delta,\sigma})^n_{\vartheta, \vartheta_1} Q^\sigma_{\vartheta_1
\vartheta_0}(t) q_0^{\Lambda,N} \rangle \!
  \rangle,
\end{equation}
We take now any $\tau< T(\vartheta, \vartheta_0)$ and then pick
$\vartheta_1\in (\vartheta_0,\vartheta)$ such that $\tau<
T(\vartheta_1, \vartheta_0)$. Then we apply in (\ref{63}) the
estimates in (\ref{36}), (\ref{37b}) and (\ref{55a}) to obtain
\begin{gather}
  \label{64}
 \sup_{t\in [0,\tau]}  \left| \frac{d^n f_G(t)}{dt^n} \right|  \leq
   C_fn^n M_f^n, \\[.2cm] \nonumber
 C_f  :=  |G|_\vartheta \sup_{t\in
 [0,\tau]}\|Q^{\sigma}_{\vartheta_1\vartheta_0}(t) q^{\Lambda,N}_0
 \|_{\vartheta_1}\leq \frac{T(\vartheta_1, \vartheta_0)}{T(\vartheta_1, \vartheta_0)-\tau}|G|_\vartheta \|k_{\mu_0}\|_{\vartheta_0},  \\[.2cm] \nonumber
 M_f = 1/ e T(\vartheta_1,
 \vartheta_0).
\end{gather}
Likewise, for the derivatives of  $g_G$, we get
\begin{equation}
 \label{64a}
 g_G^{(n)}(t) := \frac{d^n g_G (t)}{d t^n} = \langle \! \langle KG , (L^\dag)^n S^\dag (t) R^{\Lambda,N}_0 \rangle \! \rangle.
\end{equation}
Recall that $G$ here belongs to $B_{\rm bs}(\Gamma_0)$, see Definition \ref{Gdef}.
Let $C_G>0$ be such that $|G(\eta)|\leq C_G$ for all $\eta$. Then
\[
 |(K G)(\eta) | \leq C_G \exp\left(\omega |\eta| \right), \qquad \omega:= \log 2.
\]
We use this estimate in (\ref{64a}) together with (\ref{60c}) and the fact that $L^\dag$ -- the generator of $S^\dag$ -- is bounded,
to obtain that, for the same $\tau$ as in (\ref{64}), the following holds
\begin{gather}
  \label{65}
 \sup_{t\in [0,\tau]}  \left| \frac{d^n g_G(t)}{dt^n} \right|  \leq
   C_g M_g^n, \\[.2cm] \nonumber
 C_g  :=  C_G \exp \bigg{(}\omega N + \bar{c}(\sigma, \omega) \tau \bigg{)},  \\[.2cm] \nonumber
 M_f = 2 \bar{b}\left(\frac{\pi}{\sigma} \right)^{d/2}.
\end{gather}
By (\ref{65}) we conclude that $g_G$ is a real analytic function on
each $[0,\tau]$. At the same time, by the Denjoy-Carleman theorem
\cite{DC} one gets that $f_G$ is quasi-analytic on each $[0,\tau]$,
$\tau< T(\vartheta,\vartheta_0)$. These two facts imply that $h_G =
f_G - g_G$ is also quasi-analytic on each $[0,\tau]$ with $\tau$
chosen as mentioned above. Now by (\ref{46}) and (\ref{63}) we get
\begin{equation}
 \label{66}
 f_G^{(n)}(0) = \langle \! \langle ( \widehat{L}^\sigma )^n G, q^{\Lambda,N}_0\rangle \! \rangle .
\end{equation}
Likewise, by (\ref{56}) and (\ref{64a}) we have
\begin{equation*}
 %\label{67}
 g_G^{(n)}(0) = \langle \! \langle ( L^\sigma )^n KG, R^{\Lambda,N}_0\rangle \! \rangle .
\end{equation*}
Now we apply here (\ref{50b}), then (\ref{55}) and arrive at
\begin{gather*}
% \label{68}
 g_G^{(n)}(0) = \langle \! \langle  K ( \widehat{L}^\sigma )^n G, R^{\Lambda,N}_0\rangle \! \rangle
 =  \langle \! \langle ( \widehat{L}^\sigma )^n G, q^{\Lambda,N}_0\rangle \! \rangle,
\end{gather*}
which by (\ref{66}) yields that $h_G^{(n)}(0)=0$ holding for all $n\in \mathds{N}$. Since
$h_G$ is quasi-analytic, this implies that $h_G(t)=0$ for all $t<T(\vartheta, \vartheta_0)$, that by (\ref{62}) completes the proof.
\end{proof}
\begin{corollary}
 \label{Fco}
 Let $q_0^{\Lambda,N}$ and $\vartheta$, $\vartheta_0$, $t$ be as in Lemma \ref{F3lm}.
 Then, for each $t<T(\vartheta,\vartheta_0)$,
 $k_t^{\Lambda,N} := Q^\sigma_{\vartheta\vartheta_0}(t) q_0^{\Lambda,N}$ has the positivity property as in (\ref{19}),
 and hence $k_t^{\Lambda,N}(\eta)\geq 0$ for $\lambda$-almost all $\eta$.
\end{corollary}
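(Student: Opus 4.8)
The plan is to read the positivity of $k_t^{\Lambda,N}$ directly off the identity (\ref{61}) of Lemma \ref{F3lm}, whose right-hand side is manifestly nonnegative. First I would note that $k_t^{\Lambda,N}=Q^\sigma_{\vartheta\vartheta_0}(t)q_0^{\Lambda,N}$ is well defined for every $t<T(\vartheta,\vartheta_0)$: by (\ref{55a}) one has $0\le q_0^{\Lambda,N}\le k_{\mu_0}$ pointwise with $k_{\mu_0}\in\mathcal{K}_{\vartheta_0}$, hence $q_0^{\Lambda,N}\in\mathcal{K}_{\vartheta_0}$, to which the series defining $Q^\sigma_{\vartheta\vartheta_0}(t)$ (the analogue of (\ref{37}) for the auxiliary model, with $L^{\Delta,\sigma}$ in place of $L^{\Delta}$) applies as a bounded operator into $\mathcal{K}_{\vartheta}$. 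Now fix an arbitrary $G\in B^\star_{\rm bs}(\Gamma_0)$. By (\ref{61}),
\[
\langle\!\langle G,k_t^{\Lambda,N}\rangle\!\rangle=\langle\!\langle KG,S^\dag(t)R^{\Lambda,N}_0\rangle\!\rangle=\int_{\Gamma_0}(KG)(\eta)\,\bigl(S^\dag(t)R^{\Lambda,N}_0\bigr)(\eta)\,\lambda(d\eta),
\]
and every factor of the integrand is nonnegative: $KG\ge 0$ by the definition of $B^\star_{\rm bs}(\Gamma_0)$ in (\ref{7z}); $R^{\Lambda,N}_0\ge 0$ by (\ref{53}); and $S^\dag=\{S^\dag(t)\}_{t\ge 0}$, being the stochastic semigroup generated by the bounded operator $L^\dag$ on the $AL$-space $\mathcal{G}_0$ (see the paragraph preceding Lemma \ref{60lm}), is positivity-preserving, so $S^\dag(t)R^{\Lambda,N}_0\ge 0$. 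Hence $\langle\!\langle G,k_t^{\Lambda,N}\rangle\!\rangle\ge 0$ for all $G\in B^\star_{\rm bs}(\Gamma_0)$, i.e.\ $k_t^{\Lambda,N}$ has the positivity property (\ref{19}).

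To obtain the pointwise conclusion I would invoke the inclusion $B^+_{\rm bs}(\Gamma_0)\subset B^\star_{\rm bs}(\Gamma_0)$ from the remark after (\ref{7z}). For a compact $\Lambda'\subset\mathds{R}^d$ and $m\in\mathds{N}$, put $G=\mathds{1}_{A}$ with $A=\{\eta:\ \eta\subset\Lambda',\ |\eta|\le m,\ k_t^{\Lambda,N}(\eta)<0\}$; since $k_t^{\Lambda,N}$ is measurable, $A\in\mathcal{B}(\Gamma_0)$, and $G$ is bounded and of bounded support in the sense of Definition \ref{Gdef}, so $G\in B^+_{\rm bs}(\Gamma_0)\subset B^\star_{\rm bs}(\Gamma_0)$. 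The inequality $\langle\!\langle G,k_t^{\Lambda,N}\rangle\!\rangle\ge 0$ together with $k_t^{\Lambda,N}<0$ on $A$ then forces $\lambda(A)=0$. Since every $\eta\in\Gamma_0$ belongs to $\{\eta\subset\Lambda',\ |\eta|\le m\}$ for suitable $\Lambda'$ and $m$, letting $\Lambda'$ exhaust $\mathds{R}^d$ and $m\to\infty$ gives $k_t^{\Lambda,N}(\eta)\ge 0$ for $\lambda$-almost all $\eta\in\Gamma_0$.

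As the whole argument is essentially bookkeeping on top of Lemma \ref{F3lm}, I do not anticipate a genuine obstacle. The only two points deserving (mild) care are: the positivity-preservation of $S^\dag(t)$, which is legitimately at our disposal because $L^\dag$ was shown to be a bounded generator of a stochastic semigroup on $\mathcal{G}_0$, so $S^\dag(t)=e^{tL^\dag}$ is a positive operator; and the verification that the indicator functions used above indeed belong to $B_{\rm bs}(\Gamma_0)$, which is immediate from Definition \ref{Gdef} once one restricts to configurations contained in a fixed compact and having a bounded number of points.
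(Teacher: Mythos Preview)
Your proof is correct and follows essentially the same route as the paper: use identity (\ref{61}) together with $KG\ge 0$ for $G\in B^\star_{\rm bs}(\Gamma_0)$ and the fact that the stochastic semigroup $S^\dag$ preserves the positive cone $\mathcal{G}_0^{+}$, so that the right-hand side of (\ref{61}) is nonnegative. The paper's argument is stated in one line (positivity of $S^\dag(t)R^{\Lambda,N}_0$ from (\ref{53}) and stochasticity, then (\ref{7z}) as in (\ref{55})); your version simply spells out the details, and in addition makes explicit the passage from (\ref{19}) to $\lambda$-a.e.\ pointwise nonnegativity via indicator functions in $B^+_{\rm bs}(\Gamma_0)\subset B^\star_{\rm bs}(\Gamma_0)$, which the paper leaves as an obvious ``hence''.
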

\begin{proof}
 By (\ref{53}) it follows that $S^\dag R^{\Lambda,N}_0\in \mathcal{G}_0^{+}$ as $S^\dag$ is stochastic.
 Then the proof follows by (\ref{7z}), cf. (\ref{55}).
\end{proof}
In the sequel, we use one more property of
$Q^\sigma_{\vartheta\vartheta_0}(t)$. Recall that $W_x$ us defined
in (\ref{24}).
\begin{lemma}
 \label{F4lm}
Let $q_0^{\Lambda,N}$ and $\vartheta$, $\vartheta_0$, $t$ be as in
Lemma \ref{F3lm} and Corollary \ref{Fco}. Then, for each $x\in
\mathds{R}^d$ and $G\in B_{\rm bs}^{\star}(\Gamma_0)$, it follows
that
\begin{equation}
 \label{69}
 \langle \! \langle G, k^{\Lambda,N}_t \rangle \!\rangle \geq \langle \! \langle G, e(\tau_x;\cdot) W_x k^{\Lambda,N}_t \rangle \!\rangle.
\end{equation}
Hence
\begin{equation}
  \label{69z}
k^{\Lambda,N}_t (\eta) \geq  e(\tau_x;\eta) (W_x k^{\Lambda,N}_t )(\eta),
\end{equation}
holding for all $x\in \mathds{R}^d$ and $\lambda$-almost all
$\eta\in \Gamma_0$.
\end{lemma}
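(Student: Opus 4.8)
The plan is to push everything down to the non-negative density produced by the stochastic semigroup, in the same spirit as the proof of Corollary~\ref{Fco}. Write $R^{\Lambda,N}_t := S^\dagger(t)R^{\Lambda,N}_0$, so that $R^{\Lambda,N}_t\in\mathcal{G}_\vartheta^{+}$ for every $\vartheta>0$ by the stochasticity of $S^\dagger$ and the bound~(\ref{60c}), while $k^{\Lambda,N}_t = Q^\sigma_{\vartheta\vartheta_0}(t)q_0^{\Lambda,N}$. By Lemma~\ref{F3lm}, for every $G\in B_{\rm bs}(\Gamma_0)$ one has $\langle\!\langle G, k^{\Lambda,N}_t\rangle\!\rangle = \langle\!\langle KG, R^{\Lambda,N}_t\rangle\!\rangle$; rewriting the right-hand side by the Minlos lemma~(\ref{Minlos}) and using that $B_{\rm bs}(\Gamma_0)$ separates $\lambda$-almost everywhere defined functions, one obtains the representation
\[
 k^{\Lambda,N}_t(\eta) = \int_{\Gamma_0} R^{\Lambda,N}_t(\eta\cup\xi)\,\lambda(d\xi)
\]
for $\lambda$-almost all $\eta\in\Gamma_0$. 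All Lebesgue--Poisson integrals appearing below are absolutely convergent thanks to~(\ref{60c}) and the trivial bounds $0<\tau_x\leq 1$ and $|t_x|\leq 1$, so every Fubini/Minlos interchange used is legitimate.

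First I would recompute the right-hand side of~(\ref{69z}) in the same terms. Inserting the displayed representation into the definition of $W_x$ in~(\ref{24}), interchanging the two Lebesgue--Poisson integrations and collapsing them by~(\ref{Minlos}) together with the elementary identity $\sum_{\zeta\subset\omega}e(t_x;\zeta) = \prod_{y\in\omega}\bigl(1+t_x(y)\bigr) = e(\tau_x;\omega)$ (recall $t_x=\tau_x-1$), one gets
\[
 e(\tau_x;\eta)\,(W_x k^{\Lambda,N}_t)(\eta) = \int_{\Gamma_0} e(\tau_x;\eta\cup\xi)\,R^{\Lambda,N}_t(\eta\cup\xi)\,\lambda(d\xi).
\]
Subtracting this from the representation of $k^{\Lambda,N}_t$ and using item~(a) of Assumption~\ref{ass} --- whence $\phi\geq 0$, $\tau_x(y)=e^{-\phi(x-y)}\in(0,1]$, and hence $0\leq e(\tau_x;\cdot)\leq 1$ --- one arrives at
\[
 k^{\Lambda,N}_t(\eta) - e(\tau_x;\eta)\,(W_x k^{\Lambda,N}_t)(\eta) = \int_{\Gamma_0}\bigl(1-e(\tau_x;\eta\cup\xi)\bigr)\,R^{\Lambda,N}_t(\eta\cup\xi)\,\lambda(d\xi)\ \geq\ 0
\]
for $\lambda$-almost all $\eta$, which is precisely~(\ref{69z}).

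To deduce~(\ref{69}) I would pair the last display with $G\in B^\star_{\rm bs}(\Gamma_0)$. Putting $\widetilde R(\zeta):=\bigl(1-e(\tau_x;\zeta)\bigr)R^{\Lambda,N}_t(\zeta)\geq 0$, the right-hand side above equals $\int_{\Gamma_0}\widetilde R(\eta\cup\xi)\,\lambda(d\xi)$, so a further application of~(\ref{Minlos}) gives
\[
 \langle\!\langle G,\, k^{\Lambda,N}_t - e(\tau_x;\cdot)\,W_x k^{\Lambda,N}_t\rangle\!\rangle = \int_{\Gamma_0}(KG)(\eta)\,\widetilde R(\eta)\,\lambda(d\eta)\ \geq\ 0,
\]
since $(KG)(\eta)\geq 0$ for $G\in B^\star_{\rm bs}(\Gamma_0)$, see~(\ref{7z}); in particular the above also reproves~(\ref{69z}) by letting $G$ run over indicators of measurable sets of bounded support. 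The only delicate point in the whole argument is the bookkeeping of the repeated Minlos/Fubini interchanges and the verification of absolute convergence of the Lebesgue--Poisson integrals, but both are controlled uniformly by~(\ref{60c}) and the bounds $0<\tau_x\leq 1$, $|t_x|\leq 1$; so I do not expect an essential obstacle here --- this lemma is a routine companion of Lemma~\ref{F3lm} and Corollary~\ref{Fco}.
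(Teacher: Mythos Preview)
Your proof is correct and follows essentially the same route as the paper: both arguments reduce the inequality to the nonnegativity of $\langle\!\langle KG,\,(1-e(\tau_x;\cdot))R^{\Lambda,N}_t\rangle\!\rangle$, using that $KG\geq 0$, $0\leq e(\tau_x;\cdot)\leq 1$, and $R^{\Lambda,N}_t=S^\dagger(t)R^{\Lambda,N}_0\geq 0$. The only cosmetic difference is that you first extract the representation $k^{\Lambda,N}_t(\eta)=\int_{\Gamma_0}R^{\Lambda,N}_t(\eta\cup\xi)\,\lambda(d\xi)$ and prove the pointwise bound~(\ref{69z}) before the paired one~(\ref{69}), whereas the paper works directly with the pairing~(\ref{61}) and computes $V_x(\eta)=e(\tau_x;\eta)(KG)(\eta)$ to get~(\ref{69}) first.
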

\begin{proof}
Until the end of this proof $t$, $x$ and all other variables are fixed. Then by (\ref{24}) and (\ref{Minlos}) we have
\begin{eqnarray}
 \label{69a}
 {\rm RHS(\ref{69})}& = & \int_{\Gamma_0} G(\eta) e(\tau_x;\eta)
 \left(\int_{\Gamma_0} e(t_x;\xi) k_t^{\Lambda,N} (\eta\cup \xi) \lambda ( d\xi)\right) \lambda(d\eta) \\[.2cm] \nonumber
 & = & \int_{\Gamma_0} k_t^{\Lambda,N} (\eta) \left( \sum_{\xi\subset \eta}e(t_x;\xi)e(\tau_x; \eta\setminus \xi) G (\eta\setminus \xi) \right)
 \lambda ( d\eta)  \\[.2cm] \nonumber
 & = & \int_{\Gamma_0} V_x (\eta)(t) R^{\Lambda, N}_t (\eta) \lambda ( d\eta),
\end{eqnarray}
where the last equality follows by (\ref{61}), $R^{\Lambda, N}_t = S^\dag (t) R^{\Lambda, N}_0$ and
\begin{gather*}
 %\label{70}
 V_x (\eta) = \sum_{\zeta \subset \eta} \sum_{\xi\subset \zeta} e(t_x;\xi) e(\tau_x; \zeta \setminus \xi) G (\zeta \setminus \xi).
\end{gather*}
Proceeding as in the proof of (\ref{A6}) (see Appendix below) we show that
\begin{equation*}
 %\label{71}
 V_x (\eta) = e(\tau_x;\eta) (KG)(\eta).
\end{equation*}
We use this in (\ref{69a}) to get that
\begin{equation*}
 %\label{72}
 {\rm LHS(\ref{69})} - {\rm RHS(\ref{69})} =  \langle \! \langle K G, [1- e(\tau_x;\cdot)]R^{\Lambda,N}_t \rangle \!\rangle \geq 0,
\end{equation*}
which yields (\ref{69}). The latter inequality follows by: (a) the
positivity of $KG$, see (\ref{7z}); (b) the positivity of $1-
e(\tau_x;\eta)$, see (\ref{24}); (c) the fact that $R^{\Lambda,N}_t
= S^\dag (t) R^{\Lambda,N}_0$, $S^\dag$ being a stochastic semigroup
and (\ref{53}). The proof of (\ref{69z}) is obvious.
 \end{proof}

\subsubsection{Taking the limits}

By Corollary \ref{Fco} we have that $k_t^{\Lambda,N}$ has the positivity in question. Thus, we first fix $\sigma>0$ and pass to the limit
$\Lambda\to \mathds{R}^d$ and $N\to +\infty$. A sequence of compact subsets $\{\Lambda_n\}_{n\in \mathds{N}}$ is said to be
cofinal if: (i) $\Lambda_n \subset \Lambda_{n+1}$ for each $n$; (ii) each $x\in \mathds{R}^d$ is eventually contained in its members.
The proof of the next statement in a sense repeats the proof of \cite[Lemma 5.9]{Krzys}
\begin{lemma}
 \label{G1lm}
 Let $\mu_0$, $\vartheta$, $\vartheta_0$ and $t$ be as in Lemma \ref{F3lm}. Then, for any cofinal sequence $\{\Lambda_n\}_{n\in \mathds{N}}$
 and $G\in B_{\rm bs}(\Gamma_0)$, we have that
 \begin{equation*}
  %\label{73}
  \lim_{n \rightarrow \infty}\left(
\lim_{N \rightarrow \infty}\langle \!\langle G, k_t^{\Lambda_n,
N}\rangle \!\rangle\right) = \langle \!\langle G,
k_t^{\sigma}\rangle \!\rangle, \quad k_t^{\sigma} := Q^\sigma_{\vartheta\vartheta_0}(t) k_{\mu_0}.
 \end{equation*}
\end{lemma}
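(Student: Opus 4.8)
The plan is to reduce the assertion to a routine interchange of limits in the predual space $\mathcal{G}_{\vartheta_0}$, and then to perform that interchange by monotone and dominated convergence. First I would pass to the predual: since $t<T(\vartheta,\vartheta_0)$ and $G\in B_{\rm bs}(\Gamma_0)\subset\mathcal{G}_\vartheta$, formula (\ref{50}) (taken with $\vartheta_1=\vartheta_0$, $\vartheta_2=\vartheta$) gives
\[
 \langle\!\langle G, Q^\sigma_{\vartheta\vartheta_0}(t) k\rangle\!\rangle = \langle\!\langle \widetilde{G}, k\rangle\!\rangle ,\qquad \widetilde{G}:=H^\sigma_{\vartheta_0\vartheta}(t) G\in\mathcal{G}_{\vartheta_0},
\]
valid for every $k\in\mathcal{K}_{\vartheta_0}$, where $\widetilde{G}$ is well defined by (\ref{48}), (\ref{49}). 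By (\ref{55a}) each $q_0^{\Lambda_n,N}$ lies in $\mathcal{K}_{\vartheta_0}$ with $\|q_0^{\Lambda_n,N}\|_{\vartheta_0}\le\|k_{\mu_0}\|_{\vartheta_0}$, so the claim becomes $\lim_{n\to\infty}\lim_{N\to\infty}\langle\!\langle \widetilde{G}, q_0^{\Lambda_n,N}\rangle\!\rangle = \langle\!\langle \widetilde{G}, k_{\mu_0}\rangle\!\rangle$.

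Next I would identify the pointwise limits of the densities $q_0^{\Lambda,N}$. For a fixed compact $\Lambda$, the map $\xi\mapsto R_0^{\Lambda,N}(\eta\cup\xi)$ is nonnegative and, by (\ref{53}), increases in $N$ to $R^\Lambda_{\mu_0}(\eta\cup\xi)$ when $\eta\cup\xi\in\Gamma_\Lambda$ and to $0$ otherwise; hence by the monotone convergence theorem and (\ref{52a}), for $\lambda$-almost all $\eta\in\Gamma_0$,
\[
 \lim_{N\to\infty} q_0^{\Lambda,N}(\eta)= q_0^\Lambda(\eta):=\mathbf{1}_{\Gamma_\Lambda}(\eta)\,k_{\mu_0}(\eta).
\]
If $\{\Lambda_n\}_{n\in\mathds{N}}$ is cofinal, then every finite configuration is eventually contained in all $\Lambda_n$, so $q_0^{\Lambda_n}(\eta)$ increases to $k_{\mu_0}(\eta)$ as $n\to\infty$, for $\lambda$-almost all $\eta$.

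Finally I would push both limits through $\langle\!\langle\widetilde{G},\cdot\rangle\!\rangle=\int_{\Gamma_0}\widetilde{G}(\eta)(\,\cdot\,)\lambda(d\eta)$ by dominated convergence. Indeed, (\ref{55a}) supplies the uniform bound $0\le q_0^{\Lambda_n,N}(\eta)\le k_{\mu_0}(\eta)\le\|k_{\mu_0}\|_{\vartheta_0}e^{\vartheta_0|\eta|}$, whence $|\widetilde{G}(\eta)\,q_0^{\Lambda_n,N}(\eta)|\le\|k_{\mu_0}\|_{\vartheta_0}\,|\widetilde{G}(\eta)|e^{\vartheta_0|\eta|}$, which is a $\lambda$-integrable majorant because $|\widetilde{G}|_{\vartheta_0}<\infty$ by (\ref{44}). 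Thus $\lim_{N\to\infty}\langle\!\langle\widetilde{G}, q_0^{\Lambda_n,N}\rangle\!\rangle=\langle\!\langle\widetilde{G}, q_0^{\Lambda_n}\rangle\!\rangle$, and a second application of dominated convergence (same majorant) gives $\lim_{n\to\infty}\langle\!\langle\widetilde{G}, q_0^{\Lambda_n}\rangle\!\rangle=\langle\!\langle\widetilde{G}, k_{\mu_0}\rangle\!\rangle$. Undoing the predual identification of the first paragraph via (\ref{50}) yields $\langle\!\langle G, Q^\sigma_{\vartheta\vartheta_0}(t)k_{\mu_0}\rangle\!\rangle=\langle\!\langle G, k_t^\sigma\rangle\!\rangle$, as required. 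I do not expect a genuine obstacle; the one point needing care is that $Q^\sigma_{\vartheta\vartheta_0}(t)$ is assembled from an unbounded \emph{generator}, so one cannot naively commute it with the pointwise limits in $\eta$ — this is exactly why the argument is run in the predual $\mathcal{G}_{\vartheta_0}$, after which the uniform bound (\ref{55a}) reduces everything to classical convergence theorems.
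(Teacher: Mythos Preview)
Your argument is correct and shares its opening move with the paper: both proofs pass to the predual via (\ref{50}), setting $\widetilde{G}=H^\sigma_{\vartheta_0\vartheta}(t)G\in\mathcal{G}_{\vartheta_0}$ and reducing the statement to $\langle\!\langle\widetilde{G},q_0^{\Lambda_n,N}\rangle\!\rangle\to\langle\!\langle\widetilde{G},k_{\mu_0}\rangle\!\rangle$. From that point on the approaches diverge. The paper splits the difference into $J^{(1)}_n+J^{(2)}_{n,N}$ and handles each piece by explicit series manipulation: $J^{(2)}_{n,N}$ is rewritten via (\ref{Minlos}) and (\ref{52a}) as a tail of a convergent double series, while for $J^{(1)}_n$ the paper invokes an extra slice of regularity, observing that one can pick $\epsilon>0$ with $t<T(\vartheta+\epsilon,\vartheta_0+\epsilon)$ so that $\widetilde{G}\in\mathcal{G}_{\vartheta_0+\epsilon}$, which absorbs the factor $|\eta|$ arising from symmetrization over the coordinate lying outside $\Lambda_n$. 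Your route is more elementary: you identify the monotone pointwise limit $q_0^{\Lambda,N}\uparrow\mathbf{1}_{\Gamma_\Lambda}k_{\mu_0}$ directly from (\ref{52a})--(\ref{54}), then use the single dominating function $\|k_{\mu_0}\|_{\vartheta_0}|\widetilde{G}(\eta)|e^{\vartheta_0|\eta|}\in L^1(\lambda)$ (available because $\widetilde{G}\in\mathcal{G}_{\vartheta_0}$) to pass both limits through the pairing by dominated convergence. This avoids the $\epsilon$-room trick entirely and yields a shorter proof; the paper's version, by contrast, makes every estimate explicit and quantitative, which is in keeping with its general style but is not logically required here.
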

\begin{proof}
For $ k_t^{\Lambda_n,
N} =  Q^\sigma_{\vartheta\vartheta_0} (t) q_0^{\Lambda_n,
N}$, by (\ref{50}) we have
 \begin{equation*}
 % \label{74}
\langle \!\langle G, k_t^{\Lambda_n,N}\rangle \!\rangle = \langle
\!\langle H^\sigma_{\vartheta_0 \vartheta}(t) G, q_0^{\Lambda_n,
N}\rangle \!\rangle, \quad  \langle \!\langle G, k_t^{\sigma}\rangle
\!\rangle = \langle \!\langle H^\sigma_{\vartheta_0 \vartheta}(t) G,
k_{\mu_0}\rangle \!\rangle.
 \end{equation*}
Set $G_t =  H^\sigma_{\vartheta_0 \vartheta}(t) G$ and then write
\begin{equation*}
% \label{75}
  \delta(n,N)= \langle\!\langle G_t, k_{\mu_0} \rangle \!\rangle -\langle\!\langle G_t, q_0^{\Lambda_n, N}\rangle
  \!\rangle = \langle\!\langle G_t, k_{\mu_0} - q_0^{\Lambda_n, N} \rangle
  \!\rangle =:J^{(1)}_n + J^{(2)}_{n,N}.
\end{equation*}
Here
\begin{eqnarray}
  \label{76}
J^{(1)}_n &= & \int_{\Gamma_0} G_t(\eta) k_{\mu_0}(\eta) \left(1 -
\mathds{1}_{\Gamma_{\Lambda_n}}(\eta)\right) \lambda(d\eta),
    \\[.2cm] \nonumber
    J^{(2)}_{n,N} &= & \int_{\Gamma_0} G_t(\eta) \left( k_{\mu_0}(\eta) \mathds{1}_{\Gamma_{\Lambda_n}}(\eta) - q_0^{\Lambda_n, N}(\eta) \right)
    \lambda(d\eta),
\end{eqnarray}
and $\mathds{1}_{\Gamma_{\Lambda_n}}$ is the indicator of
$\Gamma_{\Lambda_n}$. To prove that, for each $n$, $J^{(2)}_{n,N}\to 0$
as $N\to +\infty$, we rewrite it as follows
\begin{eqnarray}
\label{77} J^{(2)}_{n,N} &= & \int_{\Gamma_0} \Big[ G_t(\eta)
\int_{\Gamma_{\Lambda_n}} R_{\mu_0}^{\Lambda_n}(\eta \cup \xi)
\mathds{1}_{\Gamma_{\Lambda_n}}(\eta) \left( 1 -  I_N(\eta \cup \xi)
\right) \lambda(d\xi) \Big] \lambda(d\eta)
    \\[.2cm] \nonumber
&= & \int_{\Gamma_0} G_t(\eta) \int_{\Gamma_0}
R_{\mu_0}^{\Lambda_n}(\eta \cup \xi)
\mathds{1}_{\Gamma_{\Lambda_n}}(\eta \cup \xi)\left(1 - I_N(\eta
\cup \xi) \right) \lambda(d\xi) \lambda(d\eta)
    \\[.2cm] \nonumber
&=& \int_{\Gamma_{\Lambda_n}} \sum_{\xi \subset \eta} G_t(\xi)
R_{\mu_0}^{\Lambda_n}(\eta) \left( 1 - I_N(\eta) \right) \lambda(d\eta)
    \\[.2cm] \nonumber
&= & \sum_{m = N+1}^\infty \frac{1}{m!} \int_{(\Lambda_n)^m}
R_{\mu_0}^{\Lambda_n}(\{x_1, \ldots, x_m\})\\[.2cm] \nonumber & \times & \sum_{k = 0}^m \ \sum_{\{i_1,
\ldots i_k \} \subset \{1, \ldots m \}} G_t^{(k)}(x_{i_1}, \ldots,
x_{i_k}) dx_1 \cdots dx_m,
\end{eqnarray}
where $I_N(\eta)=1$ whenever $|\eta|\leq N$ and $I_N(\eta)=0$
otherwise. Recall that we assume $k_{\mu_0} \in \mathcal{K}_{\vartheta_0}$. Then by (\ref{52a}) and (\ref{22})
it follows that
\[
 R_{\mu_0}^{\Lambda_n}(\{x_1, \ldots, x_m\})  \leq k_{\mu_0}(\{x_1, \ldots, x_m\}) \leq \|k_{\mu_0}\|_{\vartheta_0} e^{\vartheta_0
 m},
\]
holding for all $m$.  By means of this estimate we obtain from
(\ref{77}) the following
\begin{eqnarray}
\label{78}
& & |J^{(2)}_{n,N}|  \leq  \|k_0\|_{\vartheta_0} \\[.2cm]\nonumber &   & \quad \times  \sum_{m =
N+1}^\infty \frac{1}{m!} e^{\vartheta_0 m} \int_{(\Lambda_n)^m} \sum_{k
= 0}^m \ \sum_{\{i_1, \ldots i_k \} \subset \{1, \ldots m \}}
|G_t^{(k)}(x_{i_1}, \ldots, x_{i_k})| dx_1 \cdots dx_m
    \\[.2cm] \nonumber
& & \quad \leq  \|k_0\|_{\vartheta_0} \sum_{m = N+1}^\infty \frac{1}{m!}
e^{\vartheta_0 m} \sum_{k = 0}^m \frac{m!}{k! (m-k)!}
\|G_t^{(k)}\|_{L^1\left((\mathds{R}^d)^k\right)} |\Lambda_n|^{m-k},
\end{eqnarray}
where $|\Lambda|$ stands for the Lebesgue measure of $\Lambda$. The
sum over $m$ in the last line of (\ref{78}) is the remainder of the
series
\begin{eqnarray*}
& & \sum_{m = 0}^\infty \sum_{k = 0}^m  \frac{e^{\vartheta_0 k}}{k!
} \|G_t^{(k)}\|_{L^1\left((\mathds{R}^d)^k\right)}
\frac{e^{\vartheta_0 (m - k)}}{(m-k)!} |\Lambda_n|^{m-k}
    \\[.2cm]
  & &    = \sum_{k = 0}^\infty  \frac{e^{\vartheta_0 k}}{k! } \|G_t^{(k)}\|_{L^1\left((\mathds{R}^d)^k\right)}
  \sum_{m = 0}^\infty  \frac{e^{\vartheta_0 m}}{m!} |\Lambda_n|^{m}
     = |G_t|_{\vartheta_0} \exp\Big( e^{\vartheta_0} |\Lambda_n| \Big).
\end{eqnarray*}
Hence, by (\ref{78}) we obtain that
\begin{equation*}
%  \label{A50}
  \delta_n := \lim_{N\to +\infty} \delta (n,N) = J_n^{(1)}.
\end{equation*}
Then it remains to show that
$\delta_n\to 0$. By (\ref{76}) we have
\[
|J^{(1)}_n| \leq \sum_{p=1}^\infty \frac{1}{p!}
\int_{(\mathds{R}^d)^p} |G_t^{(p)} (x_1, \ldots, x_p)| k_{\mu_0}^{(p)}(x_1,
\ldots, x_p) \sum_{l=1}^p \mathds{1}_{\Lambda_N^c}(x_l) dx_1 \cdots
dx_p.
\]
Since $k_{\mu_0} \in \mathcal{K}_{\vartheta_0}$ and $G_t^{(p)}$ and $k_{\mu_0}^{(p)}$
are symmetric, we rewrite the above estimate as
\[
|J^{(1)}_n| \leq \|k_{\mu_0}\|_{\vartheta_0} \sum_{p=1}^\infty \frac{p}{p!}
e^{\vartheta_0 p} \int_{\Lambda_n^c}\int_{(\mathds{R}^d)^{p-1}}
|G_t^{(p)} (x_1, \ldots, x_p)| dx_1 \cdots dx_p.
\]
For each $t<T(\vartheta,\vartheta_0)$, one finds $\epsilon>0$ such that
$t<T(\vartheta+\epsilon,\vartheta_0+\epsilon)$, see (\ref{33}). We fix
these $t$ and $\epsilon$. Since $G_0 \in B_{\rm bs}(\Gamma_0)$, and hence $G_0 \in
\mathcal{G}_{\vartheta_*+\epsilon}$, by (\ref{49}) and (\ref{50}) we have that
$G_t \in \mathcal{G}_{\vartheta_0+\epsilon}$. We apply this in the
estimate above and obtain
\begin{eqnarray}
  \label{79}
|J^{(1)}_n|  & \leq &  \frac{\|k_0\|_{\vartheta_0}}{e \epsilon }
\Delta_n,\\[.2cm] \nonumber
\Delta_n & := & \sum_{p=1}^\infty \frac{1}{p!} e^{(\vartheta_0+\epsilon)
p} \int_{\Lambda_n^c}\int_{(\mathds{R}^d)^{p-1}} |G_t^{(p)}
(x_1, \ldots, x_p)| dx_1 \cdots dx_p.
\end{eqnarray}
Furthermore, for each $M\in \mathds{N}$, we have
\begin{gather}
  \label{A51}
  \Delta_n \leq \Delta_{n,M}^{(1)} + \Delta_{M}^{(2)},\\[.2cm]
  \nonumber
 \Delta_{n,M}^{(1)} :=  \sum_{p=1}^M \frac{1}{p!} e^{(\vartheta_0+\epsilon)
p} \int_{\Lambda_n^c}\int_{(\mathds{R}^d)^{p-1}} |G_t^{(p)}
(x_1, \ldots, x_p)| dx_1 \cdots dx_p, \\[.2cm] \nonumber
\Delta_{M}^{(2)} := \sum_{p=M+1}^\infty \frac{1}{p!}
e^{(\vartheta_0+\epsilon) p} \int_{(\mathds{R}^d)^{p}} |G_t^{(p)}
(x_1, \ldots, x_p)| dx_1 \cdots dx_p.
\end{gather}
Fix some $\varepsilon>0$, and then pick $M$ such that
$\Delta^{(2)}_M < \varepsilon/2$, which is possible since
\[
\sum_{p=1}^\infty \frac{1}{p!} e^{(\vartheta_0+\epsilon) p}
\int_{(\mathds{R}^d)^{p}} |G_t^{(p)} (x_1, \ldots, x_p)| dx_1 \cdots
dx_p = |G_t|_{\vartheta_0+\epsilon},
\]
as $G_t \in \mathcal{G}_{\vartheta_0+\epsilon}$. At the same time,
\[
\forall p\in \mathds{N} \qquad g_p (x) :=
\int_{(\mathds{R}^d)^{p-1}}|G^{(p)}_t (x, x_2, \dots , x_p)|d x_2
\cdots dx_p \in L^1(\mathds{R}^d).
\]
Since the sequence $\{\Lambda_n\}$ is exhausting, for $M$
satisfying $\Delta^{(2)}_M < \varepsilon/2$, there exists $n_1$ such
that, for $n
> n_1$, the following holds
\[
\Delta_{n,M}^{(1)}=  \sum_{p=1}^M\frac{1}{p!} e^{(\vartheta_0+\epsilon)
p} \int_{\Lambda_n^c} g_p(x) dx < \frac{\varepsilon}{2}.
\]
By (\ref{A51}) this yields $\Delta_n<\varepsilon$ for all $n>n_1$,
which by (\ref{79}) completes the proof.
\end{proof}
Our next aim is to prove (\ref{42}).
\begin{lemma}
 \label{G2lm}
 Let $\mu_0$, $\vartheta$, $\vartheta_0$ be as in Lemma \ref{G1lm}. Then for each $G\in B_{\rm bs}(\Gamma_0)$
 and $t<T(\vartheta,\vartheta_0)/2$, it follows that
 \begin{equation}
  \label{80}
 \langle \! \langle G,Q^\sigma_{\vartheta\vartheta_0}(t) k_{\mu_0} \rangle \! \rangle \to
 \langle \! \langle G,Q_{\vartheta\vartheta_0}(t) k_{\mu_0} \rangle \! \rangle, \quad {\rm as} \ \sigma\to 0^+.
 \end{equation}
\end{lemma}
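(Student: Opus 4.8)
The plan is to expand both sides of (\ref{80}) into the norm-convergent series in $n$ coming from (\ref{37}) and (\ref{48})--(\ref{50}), and to let $\sigma\to 0^+$ term by term, the interchange of sum and limit being justified by a majorant independent of $\sigma$. Writing $\widehat{L}$ for the predual of $L^\Delta$ (defined exactly as $\widehat{L}^\sigma$ in (\ref{46}), (\ref{50a}) but with $b$ in place of $b^\sigma$) and $H_{\vartheta_0\vartheta}(t)=\sum_{n\ge0}\frac{t^n}{n!}(\widehat{L})^n_{\vartheta_0\vartheta}$ as in (\ref{48}), one has by (\ref{50}) and its $\sigma$-free analogue that $\langle\!\langle G, Q^\sigma_{\vartheta\vartheta_0}(t)k_{\mu_0}\rangle\!\rangle=\sum_{n\ge0}\frac{t^n}{n!}\langle\!\langle (\widehat{L}^\sigma)^n_{\vartheta_0\vartheta}G,k_{\mu_0}\rangle\!\rangle$, and likewise with $\sigma$ dropped everywhere. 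Since $b^\sigma=b\psi_\sigma\le b\le\bar b$ by (\ref{43}), the operators $L^{\Delta,\sigma}$, hence $\widehat{L}^\sigma$, obey the bounds (\ref{29}), (\ref{36}) with exactly the same right-hand sides as for $L^\Delta$; consequently $\frac{t^n}{n!}|\langle\!\langle(\widehat{L}^\sigma)^n_{\vartheta_0\vartheta}G,k_{\mu_0}\rangle\!\rangle|\le\frac{t^n}{n!}(n/e)^n[T(\vartheta,\vartheta_0)]^{-n}|G|_\vartheta\|k_{\mu_0}\|_{\vartheta_0}$, which is the general term of a series converging for every $t<T(\vartheta,\vartheta_0)$, uniformly in $\sigma$. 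Thus it will suffice to show that, for each fixed $n$, $(\widehat{L}^\sigma)^n_{\vartheta_0\vartheta}G\to(\widehat{L})^n_{\vartheta_0\vartheta}G$ in $\mathcal{G}_{\vartheta_0}$ as $\sigma\to0^+$; pairing with $k_{\mu_0}$, which acts boundedly on $\mathcal{G}_{\vartheta_0}$ by (\ref{44}), then gives the convergence of each summand, and dominated convergence for the series yields (\ref{80}).

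The heart of the matter is a one-step estimate for the defect $D^\sigma:=\widehat{L}^\sigma-\widehat{L}$. From (\ref{50a}), (\ref{43}) one has $(D^\sigma H)(\eta)=\int_{\mathds{R}^d}b(x)(\psi_\sigma(x)-1)\sum_{\xi\subset\eta}e(t_x;\xi)e(\tau_x;\eta\setminus\xi)H(\eta\setminus\xi\cup x)\,dx$. Bounding $e(\tau_x;\cdot)\le1$, using the uniform-in-$x$ estimate $\int_{\Gamma_0}e(|t_x|;\xi)e^{\vartheta_0|\xi|}\lambda(d\xi)\le\exp(\langle\phi\rangle e^{\vartheta_0})$ from (\ref{25}), and reorganizing via the Minlos lemma (\ref{Minlos}), I expect to obtain, for any $H$ and any $\vartheta_1>\vartheta_0$,
\[
|D^\sigma H|_{\vartheta_0}\le C(\vartheta_0)\int_{\mathds{R}^d}b(x)\bigl(1-\psi_\sigma(x)\bigr)\Phi_H(x)\,dx,\qquad \Phi_H(x):=\int_{\Gamma_0}e^{\vartheta_0|\eta|}|H(\eta\cup x)|\,\lambda(d\eta),
\]
and then, by a further use of (\ref{Minlos}) together with $b\le\bar b$ and $|\zeta|e^{-(\vartheta_1-\vartheta_0)|\zeta|}\le[e(\vartheta_1-\vartheta_0)]^{-1}$, that $\int_{\mathds{R}^d}b(x)\Phi_H(x)\,dx\le C'(\vartheta_0,\vartheta_1)|H|_{\vartheta_1}<\infty$ whenever $H\in\mathcal{G}_{\vartheta_1}$. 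Since $0\le1-\psi_\sigma\le1$ and $\psi_\sigma(x)\to1$ for every $x$, Lebesgue's dominated convergence theorem then gives $|D^\sigma H|_{\vartheta_0}\to0$ for each fixed $H\in\mathcal{G}_{\vartheta_1}$ with $\vartheta_1>\vartheta_0$.

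Granting this, I would prove $(\widehat{L}^\sigma)^n_{\vartheta_0\vartheta}G\to(\widehat{L})^n_{\vartheta_0\vartheta}G$ in $\mathcal{G}_{\vartheta_0}$ by induction on $n$: the cases $n=0,1$ are immediate (for $n=1$ apply the defect bound to $H=G\in B_{\rm bs}(\Gamma_0)\subset\mathcal{G}_{\vartheta_1}$ with some $\vartheta_1\in(\vartheta_0,\vartheta)$), and for the step, fixing $\vartheta_1\in(\vartheta_0,\vartheta)$ and writing $A_\sigma:=(\widehat{L}^\sigma)^{n-1}_{\vartheta_1\vartheta}G$, $A_0:=(\widehat{L})^{n-1}_{\vartheta_1\vartheta}G$ (both in $\mathcal{G}_{\vartheta_1}$), I decompose $(\widehat{L}^\sigma)^n_{\vartheta_0\vartheta}G-(\widehat{L})^n_{\vartheta_0\vartheta}G=(\widehat{L}^\sigma)_{\vartheta_0\vartheta_1}(A_\sigma-A_0)+D^\sigma A_0$. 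The first term goes to $0$ in $|\cdot|_{\vartheta_0}$ because $\|(\widehat{L}^\sigma)_{\vartheta_0\vartheta_1}\|$ is bounded uniformly in $\sigma$ (again by $b^\sigma\le\bar b$) while $|A_\sigma-A_0|_{\vartheta_1}\to0$ by the induction hypothesis; the second term goes to $0$ by the defect bound applied to the fixed $A_0\in\mathcal{G}_{\vartheta_1}$ with $\vartheta_1>\vartheta_0$. Combined with the first paragraph this gives (\ref{80}); the range $t<T(\vartheta,\vartheta_0)/2$ recorded in the lemma is comfortably more than the argument requires and is stated in that form only to dovetail with Lemma \ref{F2lm}.

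The step I expect to be the genuine obstacle is establishing the integrability $b\,\Phi_H\in L^1(\mathds{R}^d)$ underpinning the defect estimate: this is precisely the ``one $\vartheta$-unit of room gained per application of $\widehat{L}$'' phenomenon of these scale-of-spaces arguments, and it is what forces the use of the strictly larger intermediate level $\vartheta_1\in(\vartheta_0,\vartheta)$ and the accompanying bookkeeping in the induction. Once that dominating function is in hand, the actual passage $\sigma\to0^+$ is a routine application of dominated convergence.
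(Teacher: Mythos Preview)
Your proof is correct and takes a genuinely different route from the paper. The paper argues via a Duhamel-type identity,
\[
\bigl[Q_{\vartheta\vartheta_0}(t)-Q^\sigma_{\vartheta\vartheta_0}(t)\bigr]k_{\mu_0}=\int_0^t Q^\sigma_{\vartheta\vartheta_2}(t-s)\,D^\sigma_{\vartheta_2\vartheta_1}\,Q_{\vartheta_1\vartheta_0}(s)k_{\mu_0}\,ds,
\]
pairs with $G$, passes to the predual side via $G_{t-s}=H^\sigma_{\vartheta_2\vartheta}(t-s)G$, and finally applies dominated convergence in the single $x$-integral that appears. Accommodating both evolution factors on $[0,t]$ forces the choice of two intermediate levels $\vartheta_1,\vartheta_2\in(\vartheta_0,\vartheta)$ and is precisely the origin of the restriction $t<T(\vartheta,\vartheta_0)/2$. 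Your series argument bypasses this: the uniform-in-$\sigma$ majorant (\ref{36}) controls the whole series, and termwise convergence follows from the defect bound $|D^\sigma H|_{\vartheta_0}\to 0$ for fixed $H\in\mathcal{G}_{\vartheta_1}$, which rests on exactly the same integrability $b\,\Phi_H\in L^1(\mathds{R}^d)$ that the paper extracts (in its notation, the estimate for $h(s)$ feeding into (\ref{85})). Your approach is more elementary and in fact delivers (\ref{80}) on the full range $t<T(\vartheta,\vartheta_0)$; the halving is, as you note, only recorded to mesh with Lemma \ref{F2lm}. One small point worth making explicit: the induction hypothesis should be quantified over all pairs $\vartheta'<\vartheta''$, since the step at $(\vartheta_0,\vartheta)$ invokes the case $n-1$ at the pair $(\vartheta_1,\vartheta)$ rather than at $(\vartheta_0,\vartheta)$; this is pure bookkeeping and your sketch makes the intended argument clear.
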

\begin{proof}
For each $\vartheta_1, \vartheta_2 \in (\vartheta_0, \vartheta)$, $\vartheta_2 > \vartheta_1$, we may write
\begin{eqnarray}
 \label{81}
 \left[Q_{\vartheta\vartheta_0}(t) - Q^\sigma_{\vartheta\vartheta_0}(t) \right]k_{\mu_0}& = & \int_0^t \frac{d}{ds}
 \left[Q^\sigma_{\vartheta\vartheta_1}(t-s) Q_{\vartheta_1\vartheta_0} (s) \right]k_{\mu_0} ds\\[.2cm] \nonumber
 & = & \int_0^t Q^\sigma_{\vartheta\vartheta_2}(t-s) D^\sigma_{\vartheta_2\vartheta_1} k_s ds,
\end{eqnarray}
where $k_s= Q_{\vartheta_1\vartheta_0} (s)k_{\mu_0}$ (see
(\ref{39})), $D^\sigma$ is equal to $L^\Delta$ (see (\ref{24})) with
$b(x)$ replaced by $(1-\psi_\sigma(x)) b(x)$ and $t$ satisfies
\begin{equation}
 \label{82a}
 t< \min\{T(\vartheta, \vartheta_2); T(\vartheta_1, \vartheta_0)\}.
\end{equation}
Now we take $G\in B_{\rm bs}(\Gamma_0)\subset \mathcal{G}_\vartheta$
and obtain by (\ref{81}) the following
\begin{equation}
  \label{82}
 \Upsilon_\sigma (t) := \langle \! \langle G,Q_{\vartheta\vartheta_0}(t) k_{\mu_0} \rangle \!
 \rangle - \langle \! \langle G,Q^\sigma_{\vartheta\vartheta_0}(t) k_{\mu_0} \rangle \!
 \rangle= \int_0^t \langle \! \langle G_{t-s}, D^\sigma_{\vartheta_2\vartheta_1}
 k_s \rangle \!
 \rangle d s,
\end{equation}
where $G_{t-s} = H^\sigma_{\vartheta_2 \vartheta}(t-s) G$, see
(\ref{50}).  Then by (\ref{24}), (\ref{25}) and (\ref{Minlos}) we
rewrite (\ref{82}) as follows
\begin{eqnarray}
  \label{83}
\Upsilon_\sigma (t)& = &  \int_{\mathds{R}^d}(1-\psi_\sigma (x))
b(x) g_t (x) d x , \\[.2cm] \nonumber
g_t (x) &:=& \int_0^t\int_{\Gamma_0} G_{t-s} (\eta\cup x)
e(\tau_x;\eta)( W_x k_s) (\eta) d s\lambda (d\eta).
\end{eqnarray}
Then by Lebesgue's dominated convergence theorem it follows that the
proof of (\ref{80}) will be done if we show that: (a) for each
$t<T(\vartheta, \vartheta_0)/2$, it is possible to choose
$\vartheta_2$ and $\vartheta_1$ in such a way that $t$ also
satisfies (\ref{82a}); (b) for these $t$, $\vartheta_2$ and
$\vartheta_1$, $g_t$ defined in the second line of (\ref{83}) is
absolutely integrable in $x$. To get (a) we proceed as in
\cite{Krzys}, see the very end of the proof of Lemma 5.2 in that
paper. Set $\vartheta_1 = (\vartheta + \vartheta_0)/2$ and
$\vartheta_2 = \vartheta_1 + \varepsilon \beta
(\vartheta)e^{-\vartheta}$ with $\varepsilon>0$ satisfying
$\vartheta_2 < \vartheta$. For this choice, by (\ref{33}) we have
\[
T(\vartheta_1, \vartheta_0) = \frac{\vartheta - \vartheta_0}{2 \beta
(\vartheta_1)} e^{\vartheta_0}\geq \frac{1}{2} T(\vartheta,
\vartheta_0)
>t,
\]
since $\beta$ is an increasing function, see (\ref{29}). At the same
time,
\[
T(\vartheta, \vartheta_2) = \frac{\vartheta - \vartheta_1}{\beta
(\vartheta)}  e^{\vartheta_2}- \varepsilon
e^{-(\vartheta-\vartheta_2)} > \frac{1}{2} T(\vartheta, \vartheta_0)
- \varepsilon.
\]
As $t$ is fixed, one can take positive $\varepsilon < \frac{1}{2}
T(\vartheta, \vartheta_0) - t$, that yields (a). To get (b) we first
use (\ref{25}) and then (\ref{37b}), which for $s\leq t$ and $t$
satisfying (\ref{82a}) yields
\begin{eqnarray*}
 % \label{84}
|W_x(k_s) (\eta)| & \leq & \|k_s\|_{\vartheta_1}
e^{\vartheta_1|\eta|} \exp\left(\langle \phi \rangle e^{\vartheta_1}
\right) \leq c(t, k_{\mu_0}) e^{\vartheta_1|\eta|}, \\[.2cm] \nonumber
c(t, k_{\mu_0}) & := & \frac{T(\vartheta_1 , \vartheta_0)
\|k_{\mu_0}\|_{\vartheta_0}}{T(\vartheta_1 , \vartheta_0) - t}
\exp\left(\langle \phi \rangle e^{\vartheta} \right).
\end{eqnarray*}
We use this estimate to get the following
\begin{equation}
 \label{85}
\int_{\mathds{R}^d} |g_t(x) | d x  \leq  c(t,k_{\mu_0})  \int_0^t
h(s) ds ,
\end{equation}
where
\begin{eqnarray*}
h (s) & = & \int_{\mathds{R}^d} \int_{\Gamma_0} |G_s (\eta \cup x)
e^{\vartheta_1 |\eta|} d x \lambda (d \eta) \\[.2cm] & = & e^{-\vartheta_1}
\int_{\Gamma_0} |\eta| e^{-(\vartheta_2 -\vartheta_1) |\eta|} |G_s
(\eta)|e^{\vartheta_2 |\eta|} \lambda ( d \eta) \\[.2cm] & \leq &
\frac{|G_s|_{\vartheta_2}}{e^{1+\vartheta_1}(\vartheta_2 -
\vartheta_1)} \leq \frac{T(\vartheta,
\vartheta_2)|G|_{\vartheta}}{(T(\vartheta, \vartheta_2)
-t)e^{1+\vartheta_1}(\vartheta_2 - \vartheta_1)},
\end{eqnarray*}
where we used the fact that the operator norm of
$H^\sigma_{\vartheta_2 \vartheta}(s)$ can be estimated according to
(\ref{37b}). Now we apply the latter estimate in (\ref{85}) to get
the integrability in question, that completes the proof.
\end{proof}
\subsubsection{Proof of Lemma \ref{F2lm}} To prove the lemma we have
to show that $Q_{\vartheta_2\vartheta_1} k \in
\mathcal{K}^\star_{\vartheta_2}$ whenever $k \in
\mathcal{K}^\star_{\vartheta_1}$. By (\ref{24}) and (\ref{15}) we
conclude that
\[
\frac{d}{dt} k_t(\varnothing) = (L^\Delta k_t)(\varnothing) =0,
\]
which means that $(Q_{\vartheta_2\vartheta_1} (t) k)(\varnothing ) =
k(\varnothing) = 1$. Then it remains to prove that
$Q_{\vartheta_2\vartheta_1} (t) k$ satisfies (\ref{19}) with an
arbitrary $G\in B_{\rm bs}^\star(\Gamma_0)$, which holds by
Corollary \ref{Fco} and then by Lemmas \ref{G1lm} and \ref{G2lm}.

\subsection{The proof of Theorem \ref{1tm}}

By Lemma \ref{F2lm} $k_t = Q_{\vartheta \vartheta_0}(t) k_{\mu_0}$, $t< T(\vartheta,\vartheta_0)/2$  satisfies (\ref{15}) and is the correlation function of a unique
state $\mu_t \in \mathcal{P}_{\rm exp}(\Gamma)$. Thus, to prove Theorem \ref{1tm} we have to construct the continuation of this $k_t$ to
all $t>0$ that satisfies the estimate stated in (i) and the equation (\ref{15}) in the form stated in (ii).
First we show that $k_t$ with $t<T(\vartheta, \vartheta_0)/2$ satisfies the estimate in question. To this end we use the correlation function $k_t^0$ of the
state $\mu_t^0$ corresponding to the free immigration case described by (\ref{17}) with $\phi=0$. This correlation function can be found explicitly, see
\cite[eq. (2.21)]{KK2}. In our case, it is
\begin{equation}
 \label{86}
 k^0_t (\eta) = \sum_{\xi \subset \eta} e (\varphi_t;\xi) k_{\mu_0} (\eta\setminus \xi) , \qquad
\varphi_t(x) = b(x) t.
\end{equation}
By direct inspection one gets that $k^0_t$ satisfies the equation
\begin{equation}
 \label{87}
 \frac{d}{dt} k^0_t (\eta) = (L^{0,\Delta}k^0_t)(\eta) = \sum_{x\in \eta} b(x) k_t^0 (\eta\setminus x),
\end{equation}
as well as the estimate stated in (i).
\begin{lemma}
 \label{C1lm}
For all $t<T(\vartheta, \vartheta_0)/2$ and $\lambda$-almost all $\eta\in \Gamma_0$, $k_t =Q_{\vartheta\vartheta_0}(t) k_{\mu_0}$ satisfies
\begin{equation}
 \label{88}
 k_t (\eta) \leq k_t^0(\eta).
\end{equation}
\end{lemma}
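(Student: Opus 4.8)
The idea is to compare the Cauchy problem (\ref{15}) for $k_t$ with the explicitly solvable one (\ref{87}) for the free evolution $k^0_t$ by a variation-of-constants argument: the free propagator is positivity preserving, while the ``defect'' $L^{0,\Delta}k_t-L^{\Delta}k_t$ turns out to be nonnegative once one knows the pointwise bound $e(\tau_x;\cdot)\,W_x k_t\le k_t$ for $k_t$ itself. The only genuinely new input is that the inequality (\ref{69z}), proved in Lemma \ref{F4lm} for the finite-volume regularizations $k^{\Lambda,N}_t$, survives the passage to the limit; this will be the main obstacle, and it is dealt with by passing to the weak (integrated) form, which is stable under the limits of Lemmas \ref{G1lm} and \ref{G2lm}.

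\emph{Step 1: the bound $e(\tau_x;\eta)(W_x k_t)(\eta)\le k_t(\eta)$.} Rewriting (\ref{69z}) in integrated form and using (\ref{Minlos}) to carry $W_x$ over to the test function, it reads $\langle\!\langle G,k^{\Lambda,N}_t\rangle\!\rangle\ge\langle\!\langle \widehat G_x,k^{\Lambda,N}_t\rangle\!\rangle$ for all $G\in B^\star_{\rm bs}(\Gamma_0)$, where $\widehat G_x(\eta)=\sum_{\xi\subset\eta}e(t_x;\xi)\,e(\tau_x;\eta\setminus\xi)\,G(\eta\setminus\xi)$. Because $\int_{\Gamma_0}e(|t_x|;\xi)e^{\vartheta|\xi|}\lambda(d\xi)\le\exp(\langle\phi\rangle e^{\vartheta})<\infty$, cf. (\ref{25}), the convolution property of $\lambda$ (cf. (\ref{Minlos})) shows that $\widehat G_x\in\mathcal{G}_\vartheta$ for every $\vartheta\in\mathds{R}$ whenever $G\in B_{\rm bs}(\Gamma_0)$; hence Lemmas \ref{G1lm} and \ref{G2lm} apply to both $G$ and $\widehat G_x$, and passing to the limits $N\to\infty$, $\Lambda_n\to\mathds{R}^d$, $\sigma\to0^+$ we obtain $\langle\!\langle G,k_t\rangle\!\rangle\ge\langle\!\langle\widehat G_x,k_t\rangle\!\rangle$. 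Since $k_t\in\mathcal{K}_\vartheta$ by Lemma \ref{F2lm}, a further use of (\ref{Minlos}) identifies $\langle\!\langle\widehat G_x,k_t\rangle\!\rangle=\langle\!\langle G,e(\tau_x;\cdot)W_x k_t\rangle\!\rangle$, so that $\langle\!\langle G,\,k_t-e(\tau_x;\cdot)W_x k_t\rangle\!\rangle\ge0$ for all $G\in B^\star_{\rm bs}(\Gamma_0)$; testing against $B^+_{\rm bs}(\Gamma_0)\subset B^\star_{\rm bs}(\Gamma_0)$ yields $e(\tau_x;\eta)(W_x k_t)(\eta)\le k_t(\eta)$ for all $x\in\mathds{R}^d$ and $\lambda$-a.e. $\eta$, for every $t<T(\vartheta,\vartheta_0)/2$.

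\emph{Steps 2 and 3: positivity of the free propagator and variation of constants.} From (\ref{24}) and (\ref{87}),
\[
g_t(\eta):=(L^{0,\Delta}k_t)(\eta)-(L^{\Delta}k_t)(\eta)=\sum_{x\in\eta}b(x)\bigl[k_t(\eta\setminus x)-e(\tau_x;\eta\setminus x)(W_x k_t)(\eta\setminus x)\bigr],
\]
which is nonnegative $\lambda$-a.e. by Step 1 and $b\ge0$. Since $\phi\ge0$ gives $e(\tau_x;\cdot)\le1$, the operator $L^{0,\Delta}$ obeys the bounds (\ref{29}), (\ref{36}) with $\langle\phi\rangle$ set to $0$, so, exactly as in (\ref{37})--(\ref{38}), the series $Q^0_{\vartheta'\vartheta}(t)=\sum_{n\ge0}(t^n/n!)(L^{0,\Delta})^n_{\vartheta'\vartheta}$ defines an operator-norm continuous family on an interval at least as long as $[0,T(\vartheta',\vartheta))$, and because $(L^{0,\Delta}k)(\eta)=\sum_{x\in\eta}b(x)k(\eta\setminus x)$ maps nonnegative functions to nonnegative functions, each $Q^0_{\vartheta'\vartheta}(t)$ is positivity preserving. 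Now fix $t<T(\vartheta,\vartheta_0)/2$ together with intermediate indices $\vartheta_0<\vartheta_1<\vartheta'<\vartheta$ placing $t$ below all the relevant horizons (routine, as in the proof of Lemma \ref{F1lm}). By (\ref{15}), (\ref{87}) and $L^{0,\Delta}k_t=L^{\Delta}k_t+g_t$, the function $u_t:=k^0_t-k_t$ solves $\tfrac{d}{dt}u_t=L^{0,\Delta}u_t+g_t$ with $u_0=0$; solving this linear inhomogeneous equation in the scale $\{\mathcal{K}_\vartheta\}$ by variation of constants, and invoking the uniqueness of solutions (the analogue of Lemma \ref{F1lm} for $L^{0,\Delta}$), gives $u_t=\int_0^t Q^0_{\vartheta\vartheta'}(t-s)\,g_s\,ds$. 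As $g_s\ge0$ $\lambda$-a.e. for each $s\in[0,t]$ and $Q^0_{\vartheta\vartheta'}(t-s)$ is positivity preserving, we conclude $u_t(\eta)\ge0$ for $\lambda$-a.e. $\eta$, i.e. (\ref{88}).

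The hard part is Step 1, where the pointwise inequality (\ref{69z}), available only for the finite-volume regularizations $k^{\Lambda,N}_t$, has to be pushed through the three limits of Lemmas \ref{G1lm} and \ref{G2lm}; the weak form is stable under those limits, and the one thing that must be checked is that transferring $W_x$ via the Minlos identity produces a test function $\widehat G_x$ still lying in the spaces $\mathcal{G}_\vartheta$ to which those lemmas apply.
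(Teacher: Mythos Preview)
Your argument is correct and follows essentially the same route as the paper: both write $k^0_t-k_t=\int_0^t Q^0_{\vartheta\vartheta_2}(t-s)\,D\,k_s\,ds$ with $D=L^{0,\Delta}-L^\Delta$, observe that $Q^0$ is positivity preserving, and conclude once $Dk_s\ge 0$ is known. The paper disposes of the latter in one line (``$D$ is a positive operator as follows by Lemma~\ref{F4lm}''), whereas you correctly recognise that Lemma~\ref{F4lm} only gives the inequality $e(\tau_x;\cdot)W_x k^{\Lambda,N}_t\le k^{\Lambda,N}_t$ for the regularised objects and you supply the missing limit argument explicitly. One small caveat: Lemmas~\ref{G1lm} and~\ref{G2lm} are stated for test functions $G\in B_{\rm bs}(\Gamma_0)$, and your $\widehat G_x$ is not of bounded support; however, your computation shows $\widehat G_x\in\mathcal{G}_{\vartheta'}$ for every $\vartheta'\in\mathds{R}$, and inspection of the proofs of those lemmas confirms that this is all that is actually used, so the extension is harmless.
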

\begin{proof}
By (\ref{87}) one concludes that $L^{0,\Delta}$ satisfies the
estimate as in (\ref{29}) with $\beta(\vartheta)$ replaced by
$\beta^0(\vartheta)= \bar{b}\leq \beta (\vartheta)$. This can be
used to define bounded operators
$(L^{0,\Delta})^n_{\vartheta_2\vartheta_1}$ satisfying (\ref{36}),
(\ref{36a}) and (\ref{36b}), and then
$Q^{0}_{\vartheta_2\vartheta_1} (t)$, $t<
T^0(\vartheta_2,\vartheta_1)$, defined by (\ref{37}) with the use of
these $(L^{0,\Delta})^n_{\vartheta_2\vartheta_1}$. Here
\begin{equation}
  \label{88a}
  T^0(\vartheta_2,\vartheta_1) :=
  \frac{\vartheta_2-\vartheta_1}{\bar{b}}e^{-\vartheta_1}.
\end{equation}
Note that $T^0(\vartheta_2,\vartheta_1) >
T(\vartheta_2,\vartheta_1)$, see (\ref{33}). Then
\begin{equation*}
 % \label{89}
k_t^0 = Q^0_{\vartheta \vartheta_0}(t)k_{\mu_0} \in
\mathcal{K}_\vartheta, \qquad t< T^0(\vartheta,\vartheta_0).
\end{equation*}
On the other hand, by (\ref{86}) we have that $k^0_t \in
\mathcal{K}_{\vartheta_t}$ with $\vartheta_t = \log
(e^{\vartheta_0}+\bar{b}t)$. By (\ref{88a}) one can show that $t<
T^0(\vartheta,\vartheta_0)$ implies $\vartheta_t < \vartheta$, which
means that $k_t$ lies in a smaller member of the scale
$\{\mathcal{K}_\vartheta\}$ than $\mathcal{K}_\vartheta$.

To prove that (\ref{88}) holds we fix $t<T(\vartheta,\vartheta_0)/2$
and then similarly as in (\ref{81}) write
\begin{equation}
  \label{90}
  k^0_t - k_t = Q^0_{\vartheta \vartheta_0}(t)k_{\mu_0} - Q_{\vartheta
  \vartheta_0}(t)k_{\mu_0} = \int_0^t Q^0_{\vartheta
  \vartheta_2}(t-s)
  D_{\vartheta_2 \vartheta_1} Q_{\vartheta_1
  \vartheta_0}(s)k_{\mu_0} ds,
\end{equation}
where $\vartheta_2$ and $\vartheta_1< \vartheta_2$ are chosen in
$(\vartheta_0 , \vartheta)$ in such a way that $t$ satisfies
(\ref{82a}), see the proof of Lemma \ref{G2lm}. Moreover, $D
 = L^{0,\Delta}
- L^\Delta$ and hence $D$ is a positive operator as follows by Lemma
\ref{F4lm}, see also (\ref{87}) and (\ref{24}). By (\ref{87})
$L^{0,\Delta}$ is positive; hence, so is also $Q^0_{\vartheta
  \vartheta_2}(t-s)$. At the same time, by Lemma \ref{F2lm} $Q_{\vartheta_1
  \vartheta_0}(s)k_{\mu_0} \in \mathcal{K}_{\vartheta_1}^\star$ and
thus is also positive, cf. Corollary \ref{Fco}. Then the right-hand
side of (\ref{90}) is positive, that yields (\ref{88}).
\end{proof}
\begin{corollary}
  \label{C1co}
For each $t<T(\vartheta, \vartheta_0)/2$, we have that $k_t =
Q_{\vartheta \vartheta_0}(t)k_{\mu_0} \in
\mathcal{K}_{\vartheta_t}$.
\end{corollary}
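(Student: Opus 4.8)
The plan is to read the statement off the two-sided control on $k_t$ that is already at hand for $t<T(\vartheta,\vartheta_0)/2$. The upper bound is Lemma \ref{C1lm}: $k_t(\eta)\le k_t^0(\eta)$ for $\lambda$-almost all $\eta$. The lower bound is positivity: by Lemma \ref{F2lm} we have $k_t=Q_{\vartheta\vartheta_0}(t)k_{\mu_0}\in\mathcal{K}^\star_{\vartheta}$, and testing the defining inequality $\langle\!\langle G,k_t\rangle\!\rangle\ge 0$ against $G\in B^+_{\rm bs}(\Gamma_0)\subset B^\star_{\rm bs}(\Gamma_0)$ supported on an arbitrary measurable subset of $\Gamma^{(n)}\cap\Gamma_\Lambda$ forces $k_t(\eta)\ge 0$ for $\lambda$-almost all $\eta$ (equivalently, this is the nonnegativity of correlation functions of $\mathcal{P}_{\rm exp}(\Gamma)$-states, cf. Proposition \ref{Tobipn}). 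Hence it remains only to estimate $k_t^0$ in $\|\cdot\|_{\vartheta_t}$.

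For this I would use the explicit formula (\ref{86}), $k_t^0(\eta)=\sum_{\xi\subset\eta}e(\varphi_t;\xi)\,k_{\mu_0}(\eta\setminus\xi)$ with $\varphi_t(x)=b(x)t\le\bar{b}t$, substitute the bound $k_{\mu_0}(\eta\setminus\xi)\le\|k_{\mu_0}\|_{\vartheta_0}\,e^{\vartheta_0|\eta\setminus\xi|}$ coming from (\ref{22}), and sum over the subconfigurations $\xi\subset\eta$ of each fixed cardinality by means of $\sum_{\xi\subset\eta}a^{|\xi|}b^{|\eta\setminus\xi|}=(a+b)^{|\eta|}$. This gives
\[
0\le k_t^0(\eta)\le\|k_{\mu_0}\|_{\vartheta_0}\bigl(\bar{b}t+e^{\vartheta_0}\bigr)^{|\eta|}=\|k_{\mu_0}\|_{\vartheta_0}\,e^{\vartheta_t|\eta|},
\]
because $\vartheta_t=\log(e^{\vartheta_0}+\bar{b}t)$; thus $k_t^0\in\mathcal{K}_{\vartheta_t}$ with $\|k_t^0\|_{\vartheta_t}\le\|k_{\mu_0}\|_{\vartheta_0}$. (When $\vartheta_0$ is chosen as the exponent in (\ref{I4a}) for $\mu_0$ one has $\|k_{\mu_0}\|_{\vartheta_0}=1$, which is the normalized form appearing in item (i) of Theorem \ref{1tm}.)

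Combining the two sides, $0\le k_t(\eta)\le\|k_{\mu_0}\|_{\vartheta_0}\,e^{\vartheta_t|\eta|}$ for $\lambda$-almost all $\eta$, so $\|k_t\|_{\vartheta_t}\le\|k_{\mu_0}\|_{\vartheta_0}<\infty$ and therefore $k_t\in\mathcal{K}_{\vartheta_t}$, as claimed. The range $t<T(\vartheta,\vartheta_0)/2$ is contained in $[0,T^0(\vartheta,\vartheta_0))$ (see (\ref{88a})), so (\ref{86}) is legitimate, and, as observed in the proof of Lemma \ref{C1lm}, one has $\vartheta_t<\vartheta$, consistently with $\mathcal{K}_{\vartheta_t}\subset\mathcal{K}_\vartheta$. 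I do not expect any genuine difficulty here; the only points to watch are that all inequalities be understood $\lambda$-almost everywhere and that the summation over subconfigurations of a given cardinality be carried out level by level.
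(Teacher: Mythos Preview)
Your proposal is correct and follows essentially the same route as the paper: combine the upper bound $k_t\le k_t^0$ from Lemma~\ref{C1lm} with the nonnegativity of $k_t$ (from Lemma~\ref{F2lm}/Proposition~\ref{Tobipn}) and the fact that $k_t^0\in\mathcal{K}_{\vartheta_t}$, which the paper records inside the proof of Lemma~\ref{C1lm} via the explicit formula~(\ref{86}). You simply spell out the binomial-type summation yielding $\|k_t^0\|_{\vartheta_t}\le\|k_{\mu_0}\|_{\vartheta_0}$ that the paper leaves implicit.
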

\begin{proof}
The proof follows by (\ref{88}) and the fact that $k_t^0 \in
\mathcal{K}_{\vartheta}$ for these values of $t$.
\end{proof}
The value of $\vartheta_0$ in the latter statements is predetermined
by the choice of the initial state $\mu_0$. At the same time, the
choice of $\vartheta$ can be optimized to make the time interval in
these statements as long as possible. By simple calculations we then
get that
\begin{equation}
  \label{91}
\sup_{\vartheta > \vartheta_0} T(\vartheta, \vartheta_0) =
\tau(\vartheta_0) := \frac{\delta}{\bar{b}}\exp\left( \vartheta_0 -
\frac{1}{\delta}\right),
\end{equation}
where positive $\delta=\delta (\vartheta_0)$ is the unique solution
of the equation
\begin{equation}
  \label{92}
  \delta e^\delta = \exp\left( - \vartheta_0 - \log \langle \phi \rangle
  \right).
\end{equation}
Note that the latter implies
\begin{equation}
 \label{92z}
 \tau(\vartheta) = \frac{1}{\bar{b}\langle \phi \rangle} \exp\left(- \delta (\vartheta) - \frac{1}{\delta (\vartheta)}   \right).
\end{equation}
{\it Proof of Theorem \ref{1tm}.} By (\ref{38}) one readily obtains
that the family $\{Q_{\vartheta_2 \vartheta_1}(t): \vartheta_1 \in
\mathds{R}, \ \vartheta_2 > \vartheta_1, \ t < T(\vartheta_2,
\vartheta_1)\}$ has the property
\begin{equation}
  \label{93}
  Q_{\vartheta_2 \vartheta_1}(t+s) = Q_{\vartheta_2 \vartheta'}(t) Q_{\vartheta'
  \vartheta_1}(s), \qquad \vartheta'\in (\vartheta_1, \vartheta_2),
\end{equation}
where positive $s$ and $t$ ought to satisfy: $t< T(\vartheta',
  \vartheta_1)$, $s< T(\vartheta_2,
  \vartheta')$, $t+ s < T(\vartheta_2,
  \vartheta_1)$.

Take some $\epsilon \in (0,1/2)$ and set $s_1 = \epsilon
\tau(\vartheta_0)$, see (\ref{91}). For $t\leq s_1$, by Lemma
\ref{F2lm} and Corollary \ref{C1co} it follows that $k_t=
Q_{\vartheta^0\vartheta_0}(t) k_{\mu_0}$ with $\vartheta^0 :=
\vartheta_0 +\delta (\vartheta_0)$ satisfies the estimate stated in
claim (i). Recall that $\vartheta_t=\log (e^{\vartheta_0 }+ \bar{b}
t)$ and thus $\vartheta^0>\vartheta_t$ for all $t\leq s_1$. Now we
fix some $T\leq s_1$ and then take a positive $\varepsilon< T$. For
each $0<t\leq T-\varepsilon$, by the mentioned estimate and
(\ref{28}) it follows that $k_t\in
\mathcal{K}_{\vartheta_{T-\varepsilon}}$. By (\ref{38}) and then by
(\ref{30})  we have that
\begin{eqnarray}
 \label{93a}
 \frac{d}{dt} k_t & = & L^\Delta_{\vartheta^0\vartheta_{T-\varepsilon}} k_{t}
 = L^\Delta_{\vartheta_{T-\varepsilon/2}\vartheta_{T-\varepsilon}} k_{t} \\[.2cm] \nonumber
 & = & L^\Delta_{\vartheta_{T}\vartheta_{T-\varepsilon}} k_{t}= L^\Delta_{\vartheta_T} k_{t} .
 \end{eqnarray}
To prove the continuity and continuous differentiability stated in (ii) we use again (\ref{93}). Let positive $s$ be such that
$t+s\leq T- \varepsilon$ with $\varepsilon$ as in (\ref{93a}). For $\vartheta>\vartheta_{s_1}$ and $t$ as in (\ref{93a}), we have that
$t<T(\vartheta, \vartheta_0)$. Let also $s< T(\vartheta^0,\vartheta)$ with this $\vartheta$. Then
\begin{equation}
 \label{93b}
k_{t+s} = Q_{\vartheta^0\vartheta_0}(t+s) k_{\mu_0} = Q_{\vartheta^0\vartheta}(s)Q_{\vartheta\vartheta_0}(t)k_{\mu_0}= Q_{\vartheta^0\vartheta}(s)
k_t =  Q_{\vartheta_T\vartheta_{T-\varepsilon}}(s)
k_t.
\end{equation}
Here we have taken into account that $k_t\in
\mathcal{K}_{\vartheta_{T-\varepsilon}}$ and the property mentioned
in Remark \ref{Frk}. Of course, the latter equality in (\ref{93b})
makes sense for sufficiently small $s$. Then the continuity and
continuous differentiability follow by (\ref{93b}) and (\ref{38}).
Thus, both claims (i) and (ii) hold true for $t\leq s_1$.

Set $\vartheta_1^* = \vartheta_{s_1}$ and $\vartheta^1 = \vartheta_1^* + \delta (\vartheta_1^*)$. By Corollary \ref{C1co} it
follows that $k_{s_1} \in \mathcal{K}_{\vartheta_1^*}$. For each
$k\in \mathcal{K}_{\vartheta_1^*}$, we have that
$Q_{\vartheta^1\vartheta_1^*} (t) k \in \mathcal{K}_{\vartheta^1}$
for $t< \tau(\vartheta_1^*)$. Keeping this in mind we then set
\begin{equation}
  \label{94}
  k_{s_1 + t} = Q_{\vartheta^1\vartheta_1^*} (t) k_{s_1}, \qquad t\in[0,
  \tau(\vartheta_1^*)).
\end{equation}
For $t$ such that $s_1 + t < \tau(\vartheta_0)$, by (\ref{93}) and (\ref{94}) it follows
that $k_{s_1 + t} = Q_{\vartheta^1\vartheta_0}
(s_1+t)k_{\mu_0}$.  In view of the latter, we set
\begin{equation*}
  %\label{95}
  k_t = \left\{\begin{array}{ll} Q_{\vartheta_1^* \vartheta_0}(t)
  k_{\mu_0} , \qquad &{\rm for} \ t\leq s_1;\\[.3cm] Q_{\vartheta^1 \vartheta_1^*}(t-s_1)
  k_{s_1} , \qquad &{\rm for} \ t\in [s_1, s_1 +
  \tau(\vartheta_1^*)).
  \end{array} \right.
\end{equation*}
As above, we show that this $k_t$ has all the properties stated in
both claims (i) and (ii) for $t\leq s_1 + s_2$, with $s_2 :=
\epsilon \tau(\vartheta_1^*)$. Then we repeat the same procedure
again and again. That is, we set $s_n =
\epsilon\tau(\vartheta^*_{n-1})$,
\begin{equation}
 \label{96}
\vartheta_n^* = \vartheta_{s_1 + \cdots + s_n} = \log\left(e^{\vartheta_0} + \bar{b}(s_1 + \cdots + s_n)\right),
\end{equation}
and $\vartheta^n = \vartheta_n^* + \delta (\vartheta_n^*)$. Thereafter, for a given $m$ and
$t\in [s_1 + \cdots + s_m, s_1 + \cdots + s_{m+1}]$, we set
\begin{equation*}
 %\label{97}
 k_t = Q_{\vartheta^m, \vartheta_m^*}(t- (s_1 +\cdots + s_m)) k_{s_1 +\cdots + s_m},
\end{equation*}
and prove that this $k_t$ has all the properties in question. To
complete the proof we have to show that the intervals just mentioned
cover $[0,+\infty)$, which means that
\[
 \sum_{n=1}^\infty s_n = \epsilon \sum_{n=1}^\infty \tau(\vartheta_n^*) =  +\infty.
\]
Assume that $\sum_{n=1}^\infty s_n=\bar{s}<\infty$. Then, for each
$n$, $\vartheta^*_n$ satisfies $\vartheta^*_1 \leq \vartheta^*_n
\leq \bar{\vartheta}:=\log(e^{\vartheta_0} + \bar{b} \bar{s})$, see
(\ref{96}). At the same time, the assumed convergence yields
$\tau(\vartheta_n^*) \to 0$ as $n\to +\infty$, which by (\ref{92z})
and (\ref{92}) implies $|\vartheta_n^*| \to + \infty$ as $n\to
+\infty$, that contradicts the boundedness just mentioned. \hfill
$\square$

\subsection{The proof of Theorem \ref{2tm}}

For a given compact $\Lambda$, let $m_\Lambda$ be the minimal number
of the balls $\Delta_x$ that cover $\Lambda$. Then there exist
compact $\Lambda_l$, $l=1, \dots , m_\Lambda$ such that:
\begin{equation}
 \label{98}
 (a) \ \sup_{x,y\in \Lambda_l} |x-y| \leq r; \qquad (b) \ \ \Lambda \subset
 \bigcup_{l=1}^{m_\Lambda}
 \Lambda_l,
\end{equation}
and hence
\begin{equation}
  \label{98a}
  \mu_t (N_\Lambda) \leq \sum_{l=1}^{m_\Lambda} \mu_t
  (N_{\Lambda_l}).
\end{equation}
Then it is enough to prove (\ref{32b}) for some (hence, for each)
$\Lambda_l$, for which we set $F(\gamma) = e^{\phi_* N_{\Lambda_l}
(\gamma)}$, cf. (\ref{I4d}). By Theorem \ref{1tm} and Corollary
\ref{M1co} we know that the correlation function $k_{\mu_t}$ lies in
$\mathcal{K}_{\vartheta_t}$. Hence, one can calculate
$f(t):=\mu_t(F)$, see (\ref{I4d}). Note that
$f(t)=\mu^{\Lambda_l}_t(F)$, cf. (\ref{52}). Then, cf. (\ref{56}),
by (\ref{17}) we have
\begin{eqnarray}
 \label{99}
 \frac{d}{dt} f(t) & = & \int_{\Gamma_{\Lambda_l}} F(\gamma) (L^\dag R^{\Lambda_l}_t ) (\gamma) \lambda ( d \gamma) =
 \int_{\Gamma_{\Lambda_l}}(L F)(\gamma)  R^{\Lambda_l}_t  (\gamma) \lambda ( d \gamma) \\[.2cm] \nonumber
 & = &  \int_{\Gamma_{\Lambda_l}} \left( \int_{\mathds{R}^d} b(x) e( \tau_x;\gamma) [F(\gamma\cup x) - F(\gamma)]
 dx \right) R^{\Lambda_l}_t  (\gamma) \lambda ( d \gamma) \\[.2cm] \nonumber
 & = &  \int_{\Gamma_{\Lambda_l}} \left(\int_{\Lambda_l} b(x) (e^{\phi_*} -1)
 e( \tau_x;\gamma)F(\gamma) d x \right) R^{\Lambda_l}_t  (\gamma) \lambda ( d \gamma) \\[.2cm] \nonumber
& \leq & (e^{\phi_*} -1) \bar{b}  \int_{\Gamma_{\Lambda_l}} \Upsilon_{\Lambda_l}(\gamma) R^{\Lambda_l}_t  (\gamma) \lambda ( d \gamma)
\\[.2cm] \nonumber &\leq &  (e^{\phi_*} -1)\bar{b}  |\Lambda_l| \leq   (e^{\phi_*} -1)\bar{b}  \upsilon.
\end{eqnarray}
Here
\begin{gather*}
 \Upsilon_{\Lambda_l} (\gamma) :=  \int_{\Lambda_l} \exp\left(- \sum_{y\in \gamma} [\phi(x-y) - \phi_*] \right) d x \leq |\Lambda_l|,
\end{gather*}
since, for $\gamma \in \Lambda_l$, by item (a) in (\ref{98}) and item (c) of Assumption \ref{ass} we have that $\phi(x-y) \geq \phi_*$.
Also by (a) in (\ref{98}) it follows that $\Lambda_l$ is contained in a ball $\Delta_z$ with some $z$, that yields
$|\Lambda_l| \leq \upsilon$. The latter was used in the last line of (\ref{99}). Now by (\ref{99}) and (\ref{I4d}) we have
\begin{equation*}
 %\label{99a}
 f(t) \leq f(0) + (e^{\phi_*} -1)\bar{b}  \upsilon t \leq C_{\Delta_0}^{\phi_*}(\mu_0) + (e^{\phi_*} -1)\bar{b}  \upsilon t, \qquad t\geq 0,
\end{equation*}
where we used the fact that all $\Delta_z$ have the same volume. Now
by Jensen's inequality we obtain
\[
 \phi_*\mu_t (N_{\Lambda_l} ) \leq \log f(t),
\]
that by (\ref{98a}) yields (\ref{32b}).

\section{The Mesoscopic Evolution}

\label{MESS}

In this section, we prove Theorems \ref{3tm} and \ref{4tm}.

\subsection{The proof of Theorem \ref{3tm}} It is
convenient to pass in (\ref{8z}) to a new unknown $u_t \in
L^\infty(\mathds{R}^d)$ defined by
\begin{equation}
  \label{c3}
u_t(x) = \langle \phi \rangle \left[\varrho_t(x) -
\varrho_0(x)\right].
\end{equation}
Then $\varrho_t$ solves (\ref{8z}) if and only if $u_t$ solves
\begin{equation}
  \label{k1}
\frac{d}{dt} u_t (x) = \hat{b}(x) \exp \left(- (\hat{\phi}\ast
u_t)(x) \right), \qquad u_t|_{t=0} = 0,
\end{equation}
that can also be rewritten in the form
\begin{equation}
  \label{k1a}
  u_t (x) =  \hat{b}(x) \int_0^t \exp \left(- (\hat{\phi}\ast
u_s)(x) \right) ds.
\end{equation}
Here
\begin{equation}
  \label{k2}
\hat{b}(x) = \langle \phi \rangle b(x) e^{-(\phi\ast \varrho_0)(x)},
\qquad \hat{\phi} (x) = \phi(x)/\langle \phi \rangle.
\end{equation}
For a given $T>0$, set
\begin{gather*}
  %\label{k2a}
\mathcal{U}_T= C\left([0,T] \to L^\infty (\mathds{R}^d)
\right)\\[.2cm] \nonumber
\mathcal{U}_T^{+}= \{ u\in \mathcal{U}_T:  u_t(x) \geq 0 , \ \ {\rm
for} \ {\rm all} \ t\in [0,T] \ {\rm and} \ {\rm a.a.} \ x\in
\mathds{R}^d\},
\end{gather*}
and equip $\mathcal{U}_T$ with the norm
\begin{equation*}
  %\label{k3}
  \|u\|_T = \sup_{t\in [0,T]} \|u_t\|_{L^\infty(\mathds{R}^d)}.
\end{equation*}
Then we define $V: \mathcal{U}_T \to \mathcal{U}_T$ be setting
\begin{equation}
  \label{k4}
 \left(V(u) \right)_t(x) = \hat{b}(x) \int^t_0 \exp\left( - (\hat{\phi}\ast
 u_s)(x)\right) ds,
\end{equation}
and rewrite the problem in (\ref{k1a}) on the time interval $[0,T]$
in the form
\begin{equation}
  \label{k4a}
  u = V(u).
\end{equation}
Clearly,
\begin{equation*}
  %\label{k5}
 V: \mathcal{U}^{+}_T \to \mathcal{V}_T^{+} =\{ u\in \mathcal{U}_T^{+}: u_0 = 0, \  \|u\|_T \leq b^{+}
 T\},
\end{equation*}
where $b^{+}$ is the same as in (\ref{c4}). By the inequality
$|e^{-\alpha} - e^{-\alpha'}|\leq |\alpha -\alpha'|$ that holds for
all $\alpha, \alpha'\geq 0$, one shows that $V$ is a contraction
whenever
\begin{equation}
  \label{k6}
  b^{+}T < 1,
\end{equation}
considered as a condition on $T$. In this case, the problem in
(\ref{k1}) has a unique solution on $[0,T]$ which is the fixed point
$u\in \mathcal{V}_T^{+}$ of $V$. For each $t\in [0,T]$, as a
positive element of $L^\infty (\mathds{R}^d)$ $u_t$ satisfies
$\omega_{-} (t) \leq u_t(x)\leq \omega_{+}(t)$ holding almost
everywhere on $\mathds{R}^d$. Here $\omega_{\pm}$ are to be
continuously differentiable functions such that $\omega_{\pm} (0)
=0$. To find them we write
\begin{equation}
  \label{k7}
  \frac{d}{dt}\omega_{-}(t) \leq \frac{d}{dt}u_t(x) \leq
  \frac{d}{dt}\omega_{-}(t),
\end{equation}
which together with the zero initial condition will yield the bounds
in question. On the other hand, for these bounds by (\ref{k1}) we
have, see (\ref{k2}) and (\ref{c4}),
\begin{equation}
  \label{k8}
b^{-} e^{-\omega_{+}(t)} \leq \frac{d}{dt}u_t(x) \leq b^{+}
e^{-\omega_{-}(t)}.
\end{equation}
Now we combine (\ref{k7}) with (\ref{k8}) and obtain that
$\omega_{\pm}$ ought to satisfy
\begin{equation}
  \label{k9}
 \frac{d}{dt}\omega_{-}(t) = b^{-} e^{-\omega_{+}(t)}, \qquad \frac{d}{dt}\omega_{+}(t) = b^{+}
 e^{-\omega_{-}(t)}.
\end{equation}
For $b_{+}=b_{-}=:b$, the solution of this system, and thereby of
(\ref{k1}), is
\begin{equation*}
 % \label{k10}
  \omega_{+} (t) = \omega_{-} (t) = u_t (x)= \log \left( 1 + b t\right).
\end{equation*}
For $b_{+}>b_{-}$, by (\ref{k9}) we get
\[
\frac{d^2}{dt^2} \omega_{-}(t) = \frac{d^2}{dt^2} \omega_{+}(t),
\]
which yields
\[
\left(\frac{d}{dt} \omega_{+} (t) - \frac{d}{dt} \omega_{-} (t)
\right) = \left(\frac{d}{dt} \omega_{+} (t) - \frac{d}{dt}
\omega_{-} (t) \right)\left|_{t=0} = b^{+} - b^{-}. \right.
\]
In view of the zero initial condition, the latter yields in turn
\[
\omega_{+} (t) = \omega_{-} (t) + (b^{+} - b^{-})t.
\]
We plug this in the first equation in (\ref{k9}) that turns it into
an equation for $\omega_{-}$, the solution of which is clearly given
in the second line of (\ref{c2}). Thereafter, $\omega_{+}$ is
obtained from the formula above. Thus, with the help of (\ref{c3})
we have proved the existence of the unique solution of the kinetic
equation in (\ref{8z}) -- satisfying the bounds stated in Theorem
\ref{3tm} -- on the time interval $[0,T]$. Our aim now is to
continue it to all $t>0$. To this end, for $t>0$ we rewrite
(\ref{k1a})
\begin{eqnarray}
  \label{k11}
u_{T+t} (x) & = & \hat{b}(x) \int_0^T \exp\left( - (\hat{\phi}\ast
u_s)(x) \right) ds + \hat{b}(x) \int_0^t \exp\left( -
(\hat{\phi}\ast u_{T+s})(x) \right) ds \nonumber \\[.2cm] & = & u_T(x) + \hat{b}(x) \int_0^t \exp\left( -
(\hat{\phi}\ast u_{T+s})(x) \right) ds,
\end{eqnarray}
where we have taken into account that $u_T(x) = (V(u))_T (x)$, as
follows from the consideration above. Now we introduce
\[
u^{(1)}_t (x) = u_{T+t} (x) - u_T(x),
\]
and rewrite (\ref{k11}) in the form, cf. (\ref{k4}), (\ref{k4a}),
\begin{gather*}
  %\label{k12}
u^{(1)} = V^{(1)}( u^{(1)}) , \\[.2cm] \nonumber
V^{(1)}( u^{(1)})_t(x) := \hat{b}^{(1)}(x) \int_0^t \exp\left( -
(\hat{\phi}\ast u^{(1)}_s)(x) \right) ds,\\[.2cm] \nonumber
\hat{b}^{(1)}(x) := \hat{b}(x) \exp\left( - (\hat{\phi}\ast u_T)(x)
\right).
\end{gather*}
Similarly as above, we establish that $V^{(1)}: \mathcal{U}_T^{+}
\to \mathcal{V}^{+}_{T}$ and is a contraction on
$\mathcal{V}^{+}_{T}$ with the same $T$ as in (\ref{k6}). This
yields the existence of its unique fixed point $u^{(1)} \in
\mathcal{V}^{+}_{T}$. Then the continuation on the time interval
$[T,2T]$ is
\begin{equation*}
 % \label{k15}
  u_{T+t} (x) = u_T(x) + u^{(1)}_t (x).
\end{equation*}
Since $u_{t}$ on both intervals $[0,T]$ and $[T,2T]$ solves the same
differential equation, it satisfies the same bounds found from this
equation, i.e.,
\begin{equation}
  \label{k16}
\omega_{-}(t) \leq u_t(x) \leq \omega_{+} (t) , \qquad t\in [0, 2T].
\end{equation}
where $\omega_{\pm}$ are as in (\ref{c2}). The continuations of
$u_t$ beyond $[0,2T]$ are constructed by repeating the same
procedure. The bounds in (\ref{c1}) are readily obtained from
(\ref{k16}) by (\ref{c3}). This completes the proof of the theorem.

\subsection{The proof of Theorem \ref{4tm}}
In the proof of this theorem we mostly follow the line of arguments
used in the proof of Theorem 3.9 in \cite{asia1}

\subsubsection{The rescaled evolution}

Here we construct the evolution $q_{0,\varepsilon}\to
q_{t,\varepsilon}$ mentioned in the theorem. For a conceptual
background of this approach, see the corresponding parts of
\cite{asia1,FKKK} and the references therein.

Let $L^{\Delta}_{\varepsilon}$ be the operator defined in (\ref{24})
in which $\phi$ is multiplied by $\varepsilon\in (0,1]$. Next,
define the rescaling operator $R_\varepsilon$ that acts as
$(R_\varepsilon k)(\eta) = \varepsilon^{-\eta}k(\eta)$. In general,
for a correlation function, $k_\mu$,  $R_\varepsilon k_{\mu}$ need
not be the correlation function of any state. At the same time,
$R^{-1}_\varepsilon k_{\mu}$ is the correlation function of the
`thinning' $\mu_\varepsilon$ of $\mu$ defined by its Bogoliubov
functional (\ref{I1}), (\ref{I3})
\[
B_{\mu_\varepsilon}(\theta) = B_{\mu}(\varepsilon \theta)
=\int_{\Gamma} \prod_{x\in \gamma} \left( 1 + \varepsilon \theta
(x)\right) \mu(d\gamma),
\]
as the map $\theta \mapsto \varepsilon \theta$ preserves
$\varTheta$. Then we set
\begin{equation}
  \label{k17}
  L^{\varepsilon,\Delta} = R^{-1}_\varepsilon L^{\Delta}_\varepsilon
  R_\varepsilon.
\end{equation}
Next, denote, cf. (\ref{24}),
\begin{equation}
  \label{k18}
 \tau_x^\varepsilon (y) = \exp\left(- \varepsilon \phi (x-y)
 \right),  \qquad t_x^\varepsilon (y) =\frac{1}{\varepsilon}
 \left( \tau_x^\varepsilon (y) -1 \right).
\end{equation}
Note that
\begin{equation}
  \label{k19}
\tau_x^\varepsilon (y) \to 1, \qquad t_x^\varepsilon (y) \to
t^0_x(y):=- \phi(x-y), \qquad {\rm as} \ \ \varepsilon\to 0^{+},
\end{equation}
and also, cf. (\ref{10})
\begin{equation}
  \label{k20}
  \frac{1}{\varepsilon}\int_{\mathds{R}^d} \left(1 - e^{-\varepsilon \phi(x)} \right) d
  x \leq \langle \phi \rangle.
\end{equation}
Now let $W_x^\varepsilon$ be defined as in (\ref{24}) with $t_x$
replaced by $t_x^\varepsilon$. In view of (\ref{k20}),
$(W_x^\varepsilon k)(\eta)$ satisfies (\ref{25}) with the same
(hence $\varepsilon$-independent) right-hand side. This means that
$L^{\varepsilon, \Delta}k$ satisfies (\ref{26}), which allows one to
define the unbounded operators $L^{\varepsilon,
\Delta}_\vartheta=(L^{\varepsilon, \Delta}, \mathcal{D}_\vartheta)$,
$\vartheta\in \mathds{R}$ and the bounded operators $L^{\varepsilon,
\Delta}_{\vartheta\vartheta_0}$, $\vartheta> \vartheta_0$ exactly as
in the case of $\varepsilon =1$, see subsection \ref{2.3ss}.
Therefore, one can construct the family
$\{Q^\varepsilon_{\vartheta\vartheta_0}(t): \vartheta_0 \in
\mathds{R}, \ \vartheta>\vartheta_0, \ t \in [0,T(\vartheta,
\vartheta_0))\}$ with $T(\vartheta, \vartheta_0)$ as in (\ref{33}),
see also (\ref{37}). Then, for $q_{0,\varepsilon}\in
\mathcal{K}_{\vartheta_0}$, we set
\begin{equation}
  \label{k21}
q_{t,\varepsilon} = Q^\varepsilon_{\vartheta\vartheta_0}(t)
q_{0,\varepsilon}, \qquad t< T(\vartheta, \vartheta_0).
\end{equation}
This $q_{t,\varepsilon}$ satisfies, cf. (\ref{31}),
\begin{equation}
  \label{k22}
  \frac{d}{dt} q_{t,\varepsilon} = L^{\varepsilon,
\Delta}_\vartheta q_{t,\varepsilon}.
\end{equation}
By (\ref{k17}), (\ref{k18}) and (\ref{k21}) we have that
\begin{equation}
  \label{k23}
q_{t,1} = k_t = Q_{\vartheta\vartheta_0}(t) k_{\mu_0},
\end{equation}
where $k_t$ is as in Theorem \ref{1tm}.

\subsubsection{The Vlasov evolution}

Let $W^0_x$ be defined as in (\ref{24}) with $t_x$ replaced by
$t^0_x$, see (\ref{k19}). Then $W^0_x k$ also satisfies (\ref{25}).
Then the Vlasov operator is defined as, cf. (\ref{k19}),
\begin{equation}
  \label{k24}
(L^{0,\Delta} k)(\eta) = \sum_{x\in \eta} b(x) (W^0_x
k)(\eta\setminus x).
\end{equation}
Analogously as above, we define the family of operators
$\{Q^0_{\vartheta\vartheta_0}(t): \vartheta_0 \in \mathds{R}, \
\vartheta>\vartheta_0, \ t \in [0,T(\vartheta, \vartheta_0))\}$ with
$T(\vartheta, \vartheta_0)$ as in (\ref{33}). Then, for $q_{0,0}\in
\mathcal{K}_{\vartheta_0}$,  $q_{0,t} = Q^0_{\varepsilon
\varepsilon_0} (t) q_{0,0}\in \mathcal{K}_\vartheta$ is the unique
solution of the problem
\begin{equation}
  \label{k25}
  \frac{d}{dt} q_{t,0} = L^{0,
\Delta}_\vartheta q_{t,0}, \qquad q_{t,0}|_{t=0}= q_{0,0},
\end{equation}
on the time interval $[0,T(\vartheta,\vartheta_0))$.
\begin{lemma}
  \label{k1lm}
Let $q_{0,0}$ in (\ref{k25}) be the correlation function of the
Poisson state $\pi_{\varrho_0}$ with $k_{\pi_{\varrho_0}}\in
\mathcal{K}_{\vartheta_0}$. Then $q_{t,0} = Q^{0}_{\vartheta
\vartheta_0} (t) q_{0,0}$ can be continued to all $t>0$, and this
continuation is the correlation function of the Poisson state
$\pi_{\varrho_t}$ with $\varrho_t$ that solves the kinetic equation
(\ref{8z}), the existence and properties of which were established
in Theorem \ref{3tm}.
\end{lemma}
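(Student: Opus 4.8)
The plan is to produce the global density $\varrho_t$ by Theorem~\ref{3tm}, then to check by a direct computation that its ``Poissonization'' $e(\varrho_t;\cdot)$ -- the function $\Gamma_0\ni\eta\mapsto\prod_{x\in\eta}\varrho_t(x)=k_{\pi_{\varrho_t}}(\eta)$, cf. (\ref{1fa}) -- is a classical solution of the Vlasov Cauchy problem (\ref{k25}) with initial datum $q_{0,0}=k_{\pi_{\varrho_0}}$, and finally to invoke the uniqueness recorded after (\ref{k25}) together with a step-by-step continuation as in the proof of Theorem~\ref{1tm}. First I would observe that by the bound (\ref{c1}), for every $T>0$ one has $M_T:=\sup_{t\in[0,T]}\|\varrho_t\|_{L^\infty(\mathds{R}^d)}\le\|\varrho_0\|_{L^\infty(\mathds{R}^d)}+\omega_{+}(T)/\langle\phi\rangle<\infty$; consequently $e(\varrho_t;\cdot)\in\mathcal{K}_{\vartheta_t}$ with $\vartheta_t:=\log\|\varrho_t\|_{L^\infty(\mathds{R}^d)}$, and $e(\varrho_t;\cdot)\in\mathcal{K}_{\vartheta_t}\subset\mathcal{D}_\vartheta$ for every $\vartheta>\vartheta_t$ by (\ref{28}). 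Note that $\|\varrho_0\|_{L^\infty(\mathds{R}^d)}\le e^{\vartheta_0}$ is precisely the hypothesis $k_{\pi_{\varrho_0}}\in\mathcal{K}_{\vartheta_0}$.

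The core of the argument is the identity
\[
\bigl(L^{0,\Delta}e(\varrho;\cdot)\bigr)(\eta)=\sum_{x\in\eta}b(x)\,e^{-(\phi\ast\varrho)(x)}\,e(\varrho;\eta\setminus x),
\]
valid for every bounded $\varrho\ge0$. To get it I would insert $k=e(\varrho;\cdot)$ into the definition (\ref{k24}), (\ref{k19}) of $W^0_x$, use $e(\varrho;\eta\cup\xi)=e(\varrho;\eta)\,e(\varrho;\xi)$ for $\lambda$-a.a.\ disjoint $\eta,\xi$, and apply the Lebesgue--Poisson exponential formula following from (\ref{8}):
\[
\int_{\Gamma_0}\prod_{y\in\xi}\bigl(-\phi(x-y)\varrho(y)\bigr)\,\lambda(d\xi)=\exp\Bigl(-\int_{\mathds{R}^d}\phi(x-y)\varrho(y)\,dy\Bigr)=e^{-(\phi\ast\varrho)(x)},
\]
so that $\bigl(W^0_x e(\varrho;\cdot)\bigr)(\eta)=e(\varrho;\eta)\,e^{-(\phi\ast\varrho)(x)}$; summing over $x\in\eta$ with weights $b(x)$ gives the identity. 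On the other hand, differentiating $t\mapsto e(\varrho_t;\eta)=\prod_{x\in\eta}\varrho_t(x)$ termwise gives $\frac{d}{dt}e(\varrho_t;\eta)=\sum_{x\in\eta}\dot\varrho_t(x)\,e(\varrho_t;\eta\setminus x)$, and $\dot\varrho_t(x)=b(x)e^{-(\phi\ast\varrho_t)(x)}$ by the kinetic equation (\ref{8z}); hence $\frac{d}{dt}e(\varrho_t;\cdot)=L^{0,\Delta}e(\varrho_t;\cdot)$, which is (\ref{k25}).

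It remains to supply the regularity required by Definition~\ref{1df} and to continue in time. For $\vartheta>\log M_T$, continuity of $t\mapsto e(\varrho_t;\cdot)$ in $\mathcal{K}_\vartheta$ on $[0,T]$ follows from the telescoping bound $|e(\varrho;\eta)-e(\varrho';\eta)|\le|\eta|\,M_T^{|\eta|-1}\,\|\varrho-\varrho'\|_{L^\infty(\mathds{R}^d)}$ combined with $|\eta|\,e^{-(\vartheta-\log M_T)|\eta|}\le[e(\vartheta-\log M_T)]^{-1}$ and the continuity of $t\mapsto\varrho_t$ in $L^\infty(\mathds{R}^d)$ furnished by the fixed-point construction in Theorem~\ref{3tm}; continuous differentiability follows the same way, using in addition $\|\dot\varrho_t\|_{L^\infty(\mathds{R}^d)}\le\bar{b}$. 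Thus, choosing $\vartheta>\vartheta_0$ with $\|\varrho_0\|_{L^\infty(\mathds{R}^d)}<e^\vartheta$, the map $t\mapsto e(\varrho_t;\cdot)$ is a classical solution of (\ref{k25}) in $\mathcal{K}_\vartheta$ on $[0,t_1)\cap[0,T(\vartheta,\vartheta_0))$, cf. (\ref{33}), where $t_1>0$ is the first time $\|\varrho_t\|_{L^\infty(\mathds{R}^d)}$ reaches $e^\vartheta$; by the uniqueness recorded after (\ref{k25}) it coincides there with $q_{t,0}=Q^0_{\vartheta\vartheta_0}(t)q_{0,0}$.

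Finally, since $\varrho_t$ exists globally with $\|\varrho_t\|_{L^\infty(\mathds{R}^d)}$ bounded on compact time intervals, the identification is extended by restarting the Vlasov flow from the bounded Poissonian state $k_{\pi_{\varrho_{t_*}}}=e(\varrho_{t_*};\cdot)$ at successive times $t_*$, using the semigroup relation (\ref{93}) for the family $Q^0_{\vartheta\vartheta_0}$ and the same bookkeeping as in the proof of Theorem~\ref{1tm} (set $s_n=\epsilon\,\tau(\vartheta^*_{n-1})$, $\vartheta^n=\vartheta^*_n+\delta(\vartheta^*_n)$, and rule out $\sum_n s_n<\infty$ via (\ref{92z}) and (\ref{92})). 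The continuation of $q_{t,0}$ to $[0,+\infty)$ then equals $e(\varrho_t;\cdot)=k_{\pi_{\varrho_t}}$, the correlation function of $\pi_{\varrho_t}\in\mathcal{P}_{\rm exp}(\Gamma)$ since $\varrho_t$ is bounded. The only genuinely delicate point is this last gluing across the finite horizons $T(\vartheta,\vartheta_0)$ while staying within the scale $\{\mathcal{K}_\vartheta\}_{\vartheta\in\mathds{R}}$; the explicit bound (\ref{c1}) on $\varrho_t$ makes the estimates it needs essentially immediate, so the real substance of the proof is the algebraic identity of the second paragraph.
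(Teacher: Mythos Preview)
Your proof is correct and follows the same approach as the paper: the key step in both is the computation $(W^0_x e(\varrho_t;\cdot))(\eta)=e(\varrho_t;\eta)\,e^{-(\phi\ast\varrho_t)(x)}$, which together with (\ref{8z}) shows that $e(\varrho_t;\cdot)$ satisfies (\ref{k25}). The paper's proof consists of this identity alone and leaves the regularity checks, the appeal to uniqueness, and the continuation in time implicit; you have spelled these out carefully, but the substance is identical.
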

\begin{proof}
In fact, to prove this statement we have to check whether
$k_{\pi_{\varrho_t}} = e(\varrho_t;\cdot)$ satisfies the first
equality in (\ref{k25}). By (\ref{k19}) and (\ref{8}) we have that
\begin{equation*}
 % \label{k26}
 W_x e(\varrho_t;\eta) = \int_{\Gamma_0} e(t^0_x;\xi) e(\varrho_t; \eta\cup\xi) \lambda ( d\xi)
 = e(\varrho_t; \eta) \exp\left(- (\phi\ast \varrho_t)(x) \right).
\end{equation*}
We plug this in (\ref{k24}) and then  get
\begin{gather*}
L^{0,\Delta} e(\varrho_t;\eta) = \sum_{x\in \eta} b(x)  \exp\left(-
(\phi\ast \varrho_t)(x) \right) e(\varrho_t; \eta\setminus x) \\[.2cm]  =
\sum_{x\in \eta} \frac{d}{dt} \varrho_t (x) e(\varrho_t;
\eta\setminus x),
\end{gather*}
see (\ref{8z}), which completes the proof.
\end{proof}

\subsubsection{The proof of Theorem \ref{4tm}}

As we assume that $\mu_0$ is Poisson approximable, see Definition
\ref{Poidf}, there exist $q_{0,\varepsilon} \in
\mathcal{K}_{\vartheta_0}$, $\varepsilon \in [0,1]$ such that: (a)
$q_{0,1} =k_{\mu_0}$; (b) $q_{0,0} =k_{\pi_{\varrho_0}} =
e(\varrho_0;\cdot)$; (c) $\| q_{0,\varepsilon} -
e(\varrho_0;\cdot)\|_{\vartheta_0} \to 0$ as $\varepsilon \to
0^{+}$. Let $q_{t,\varepsilon}$ be as in (\ref{k21}), (\ref{k22})
with this $q_{0,\varepsilon}$. By (\ref{k23}) we have that
$q_{t,1}=k_t$; hence, it remains to prove that there exist $T>0$ and
$\vartheta>\vartheta_0$ such that the convergence stated in
(\ref{rho}) does hold. For $\vartheta_0$ as just discussed, let
$\delta (\vartheta_0)$ and $\tau(\vartheta_0)$ be as in (\ref{92})
and (\ref{92z}), respectively. Then we set $\vartheta = \vartheta_0
+ \delta(\vartheta_0)$, $T = \tau(\vartheta_0)/2$ and then write
\begin{gather}
  \label{k26a}
 Q^\varepsilon_{\vartheta \vartheta_0} (t) q_{0,\varepsilon} -  Q^0_{\vartheta \vartheta_0} (t)
 e(\varrho_0;\cdot)  =  J^1_{\varepsilon} (t) + J^2_{\varepsilon}
 (t), \qquad t\leq T, \\[.2cm] \nonumber
J^1_{\varepsilon} (t)  =   \left[ Q^\varepsilon_{\vartheta
\vartheta_0} (t) - Q^0_{\vartheta \vartheta_0} (t)
 \right]  e(\varrho_0;\cdot) , \\[.2cm] \nonumber
J^2_{\varepsilon}
 (t)  =
 Q^\varepsilon_{\vartheta \vartheta_0} (t)\left(
 q_{0,\varepsilon} - e(\varrho_0;\cdot)\right).
\end{gather}
Similarly as in (\ref{81}) we write
\begin{equation}
  \label{k27}
J^1_{\varepsilon} (t) = \int_0^t Q^\varepsilon_{\vartheta
\vartheta_2} (t-s) D^\varepsilon_{\vartheta_2\vartheta_1}
e(\varrho_s;\cdot) ds
\end{equation}
with
\begin{equation}
  \label{k28}
  D^\varepsilon = L^{\varepsilon,\Delta} - L^{0,\Delta}.
\end{equation}
In the right-hand side of (\ref{k27}), we have taken into account
that $ Q^0_{\vartheta_1 \vartheta_0} (s)e(\varrho_0,\cdot) =
e(\varrho_s,\cdot)$ with $e(\varrho_s,\cdot)\in
\mathcal{K}_{\vartheta_1}$. The numbers $\vartheta_1$ and
$\vartheta_2$ will be chosen later. By (\ref{c1}) and (\ref{c2}),
for $\varrho_0 \leq e^{\vartheta_0}$,  one can show that $\varrho_t
(x) \leq  e^{\vartheta_0}+ \bar{b}t$. holding for all $t>0$ and both
cases $b^{+} >b^{-}$ and $b^{+} =b^{-}$. Then in (\ref{k27}) we have
that $e(\varrho_s;\cdot) \in \mathcal{K}_{\vartheta_T}$, that holds
for all $s\leq t\leq T$. Thus, we set $\vartheta_1 = \vartheta_T$.
In view of the continuity of $T(\vartheta',\vartheta)$ in both
arguments, see (\ref{33}), for each fixed $t\leq T$, one can pick
$\vartheta_2 \in (\vartheta_T, \vartheta)$ in such a way that $t-s <
T(\vartheta, \vartheta_2)$, holding for all $s\in [0,t]$, whenever
the following is satisfied
\begin{equation}
  \label{k29}
  T < T(\vartheta, \vartheta_T),
\end{equation}
for the choice $T = \tau(\vartheta_0)/2$ made above. In Appendix
below we prove that (\ref{k29}) does hold. Then we fix $t\leq T$,
pick $\vartheta_2$ as mentioned above, and then use the estimate
\[
\| Q^\varepsilon_{\vartheta \vartheta_2} (t-s)\|\leq
\frac{T(\vartheta, \vartheta_2)}{T(\vartheta, \vartheta_2)-t}
\]
where we employed (\ref{37b}) as $L^{\varepsilon,\Delta}$ satisfies
(\ref{36}) with the same right-hand side. Now we take into account
that $\|e(\varrho_s;\cdots)\|_{\vartheta_T} \leq 1$ and apply this
and the latter estimate in (\ref{k27}). This yields
\begin{equation}
  \label{k30}
  \|J^1_\varepsilon (t)\|_\vartheta \leq \frac{t T(\vartheta, \vartheta_2)}{T(\vartheta,
  \vartheta_2)-t} \|D^\varepsilon_{\vartheta_2\vartheta_T}\|.
\end{equation}
In view of (\ref{k28}), to estimate
$\|D^\varepsilon_{\vartheta_2\vartheta_T}\|$ we have to consider
$W^\varepsilon_x - W^\varepsilon_0$. By means of the standard
inequality
\[
\left|\prod_{x\in \xi} a_x - \prod_{x\in \xi} b_x \right| \leq
\sum_{x\in \xi} |a_x  - b_x| \prod_{y \in \xi\setminus x}
\max\{|a_x|;|b_xx|\}, \qquad a_x, b_x \in \mathds{R},
\]
and by (\ref{22}) for $k\in \mathcal{K}_{\vartheta_T}$ we get, see
(\ref{24}) and (\ref{k18}) and (\ref{k19}),
\begin{eqnarray*}
  %\label{k31}
  \left|[(W^\varepsilon_x - W^0_x) k](\eta) \right| & \leq &
  \|k\|_{\vartheta_T} e^{\vartheta_T|\eta|} \int_{\Gamma_0} \left|
  \prod_{y\in \xi} t^\varepsilon_x (y) - \prod_{y\in \xi} t^0_x (y)
  \right| \exp\left(\vartheta_T |\xi|\right) \lambda (
  d\xi)\\[.2cm] \nonumber
 & \leq & \varepsilon\bar{\phi} \|k\|_{\vartheta_T} e^{\vartheta_T|\eta|}
  \int_{\Gamma_0} |\xi| e(\phi(x-\cdot);\xi)  \exp\left(\vartheta_T |\xi|\right) \lambda (
  d\xi)\\[.2cm] \nonumber& = &\varepsilon\bar{\phi}\langle \phi \rangle  \|k\|_{\vartheta_T}
  e^{\vartheta_T|\eta| +\vartheta_T} \exp\left( e^{\vartheta_T}\langle \phi
  \rangle\right).
\end{eqnarray*}
By means of this estimate we finally get
\begin{equation}
  \label{k32}
  \|D^\varepsilon_{\vartheta_2 \vartheta_T}\| \leq \varepsilon
  \frac{\bar{\phi} \langle \phi \rangle \bar{b}}{e(\vartheta_2 -
  \vartheta_T)} \exp\left( e^{\vartheta_T}\langle \phi
  \rangle \right).
\end{equation}
Now we recall that $\vartheta= \vartheta_0+\delta (\vartheta_0)$ and then $T(\vartheta,\vartheta_0) = \tau(\vartheta_0)$.
At the same time, $T= \tau(\vartheta_0)/2$, which by (\ref{37b}) yields
\[
 \|Q^\varepsilon_{\vartheta\vartheta_0} (t) \| \leq \|Q^\varepsilon_{\vartheta\vartheta_0} (T) \|\leq 2.
\]
We apply this estimate in the last line of (\ref{k26a}) and then obtain
\begin{equation}
 \label{k33}
 \|J^2_\varepsilon(t)\| \leq 2 \|q_{0,\varepsilon} - e(\varrho_0;\cdot)\|.
\end{equation}
Then the proof of the convergence in (\ref{rho}) follows by (\ref{k26a}), (\ref{k30}), (\ref{k32}) and (\ref{k33}).
\section*{Acknowledgment}
The author was financially supported by National Science Centre,
Poland, grant
\\
2017/25/B/ST1/00051, that is cordially acknowledged by him.

\section*{Appendix}

\subsection*{The proof of (\ref{24})}
For $\gamma \in \Gamma$, let $\eta\Subset \gamma$ mean that
$\eta\subset \gamma$ and $\eta \in \Gamma_0$. Then, see \cite[eq.
(4.18)]{Tobi}, a generalization of (\ref{1fa}) that relates a state
$\mu\in \mathcal{P}_{\rm exp}(\Gamma)$ to its correlation function
$k_\mu$ reads
\begin{equation*}
  %\label{A1}
  \int_{\Gamma} \left(\sum_{\eta \Subset \gamma } G(\eta) \right)
  \mu(d \gamma) = \int_{\Gamma_0} G(\eta) k_\mu( \eta) \lambda (
  d\eta),
\end{equation*}
holding for all appropriate $G:\Gamma_0\to \mathds{R}$. Then by
(\ref{3}) and (\ref{17}), for a given $\mu\in \mathcal{P}_{\rm
exp}(\Gamma)$ and $\theta \in \varTheta$, we have
\begin{eqnarray}
\label{A2} {\rm LHS}(\ref{1g}) & = &
\int_{\Gamma}\int_{\mathds{R}^d} b(x) \theta (x)\prod_{y\in \gamma}
\left[1+ t_x (y) + \tau_x (y)
\theta (y) \right] d x \mu(d\gamma) \\[.2cm] \nonumber  & = &
\int_{\mathds{R}^d} b(x) \theta (x)  \left(\int_{\Gamma}\left(
\sum_{\eta \Subset \gamma} e(\hat{\theta}_x; \eta) \right)  \mu (
d\gamma)\right) d x \\[.2cm] \nonumber  & = &
\int_{\mathds{R}^d} b(x) \theta (x)  \left(\int_{\Gamma_0}
e(\hat{\theta}_x; \eta) k_\mu(\eta) \lambda (d \eta)\right) dx
\\[.2cm] \nonumber  & = &
\int_{\Gamma_0} \int_{\mathds{R}^d} b(x) \theta (x) k_\mu(\eta)
\prod_{y\in \eta} \left[ t_x (y) + \tau_x (y) \theta (y)\right] d x
\lambda ( d\eta)  \\[.2cm] \nonumber  & = & \int_{\mathds{R}^d} b(x)
 \left( \int_{\Gamma_0} k_\mu(\eta)\left( \sum_{\xi\subset \eta}
e(t_x;\xi) e(\tau_x ; \eta\setminus \xi) e(\theta; \eta \cup x
\setminus \xi)\right) \lambda (d \eta) \right) \\[.2cm] \nonumber  & = &
\int_{\Gamma_0} \int_{\Gamma_0} \int_{\mathds{R}^d} b(x)
k_\mu(\eta\cup \xi) e(t_x;\xi) e(\tau_x ; \eta) e(\theta; \eta \cup
x) d x \lambda (d\eta) \lambda (d \xi)  \\[.2cm] \nonumber  & = & \int_{\Gamma_0}
\left(   \sum_{x\in \eta} b(x) e(\tau_x ; \eta\setminus x) \right. \\[.2cm] \nonumber  &  \times &  \left.  \left(
\int_{\Gamma_0}e(t_x;\xi) k_\mu (\eta \setminus x \cup \xi) \lambda
( d\xi) \right) \right) e(\theta; \eta) \lambda ( d \eta)  \\[.2cm] \nonumber  & =
&  \int_{\Gamma_0}\left(L^\Delta k_\mu \right)(\eta) e(\theta; \eta)
\lambda ( d \eta),
\end{eqnarray}
where $L^\Delta$ in the last line of (\ref{A2}) is given in
(\ref{24}). Then the proof follows by (\ref{1g}).

\subsection*{The proof of (\ref{50b})}

For $F = KG$, we have that
\[
F(\eta \cup x) = \sum_{\xi \subset \eta} G(\xi) + \sum _{\xi \subset
\eta} G(\xi\cup x) = F(\eta) + \sum _{\xi \subset \eta} G(\xi\cup x)
.
\]
Then
\begin{equation}
  \label{A4}
  {\rm LHS}(\ref{50b}) (\eta) = \int_{\mathds{R}^d} b^\sigma (x)
  e(\tau_x;\eta) \left(\sum_{\zeta \subset \eta} G(\zeta \cup x)
  \right) dx.
\end{equation}
By (\ref{50a}), we also have
\begin{equation}
  \label{A5}
 {\rm RHS}(\ref{50b}) (\eta) = \int_{\mathds{R}^d} b^\sigma (x) U_x
 (\eta) d x,
\end{equation}
where
\begin{eqnarray}
  \label{A6}
U_x (\eta)& = & \sum_{\zeta \subset \eta} \sum_{\xi\subset \zeta}
e(t_x;\xi) e(\tau_x;\zeta \setminus \xi) G(\zeta \setminus \xi\cup
x)\\[.2cm] \nonumber & = & \sum_{\xi \subset \eta} e(t_x;\xi) \sum_{\zeta \subset \eta\setminus \xi}e(\tau_x;\zeta ) G(\zeta \cup
x)\\[.2cm] \nonumber & = & \sum_{\zeta \subset \eta} e(\tau_x;\zeta ) G(\zeta \cup
x)\sum_{\xi \subset \eta\setminus \zeta} e(t_x;\xi)\\[.2cm] \nonumber & =
& \sum_{\zeta \subset \eta} e(\tau_x;\zeta ) e(\tau_x;\eta\setminus
\zeta ) G(\zeta \cup x) = e (\tau_x;\eta ) \sum_{\zeta \subset \eta}
G(\zeta \cup x).
\end{eqnarray}
In the latter line, we used the following, see the last line in
(\ref{24}),
\[
\sum_{\xi \subset \eta} e(t_x;\xi) = e(1 + t_x; \eta) =
e(\tau_x;\eta).
\]
Now we use (\ref{A6}) w (\ref{A5}) and then by (\ref{A4}) conclude
that (\ref{50b}) holds.

\subsection*{The proof of (\ref{57})}
By (\ref{17}) we have
\begin{gather}
 \label{A3}
 {\rm LHS(\ref{56})} = - \int_{\Gamma_0} \Psi_\sigma (\eta) F(\eta) R(\eta) \lambda (d \eta)\\[.2cm] \nonumber + \int_{\Gamma_0}
 \left(\int_{\mathds{R}^d} b^\sigma (x) e(\tau_x;\eta) F(\eta\cup x) d x  \right) R(\eta) \lambda (d \eta).
\end{gather}
The second line of (\ref{A3}) by (\ref{Minlos}) can be transformed to
\[
 \int_{\Gamma_0}
 \left( \sum_{x\in \eta} b^\sigma (x)
 e(\tau_x;\eta) R(\eta\setminus x)  \right) F(\eta) \lambda (d\eta)= \int_{\Gamma_0}F(\eta) (B R)(\eta) \lambda ( d \eta).
\]
Then by means of (\ref{56}) we conclude that $L^\dag$ acts as described in (\ref{57}).

\subsection*{The proof of (\ref{k29})}

By (\ref{91}) we have that
\begin{equation}
  \label{A7}
 T = \tau(\vartheta_0)/2 = \frac{\delta}{2\bar{b}} \exp\left(\vartheta_0 - \frac{1}{\delta} \right),
\end{equation}
with $\delta>0$ satisfying (\ref{92}). Then
\begin{equation}
  \label{A8}
 e^{\vartheta_T} = e^{\vartheta_0} + \bar{b}T =
 e^{\vartheta_0}\left( 1+ \frac{\delta}{2} \exp\left(-\frac{1}{\delta}
 \right)\right) =: e^{\vartheta_0} v(\delta).
\end{equation}
On the other hand, by (\ref{33}) and then by (\ref{A8}) and
(\ref{A7}) we have
\begin{gather*}
 % \label{A9}
  T(\vartheta, \vartheta_T) = \frac{\vartheta_0 + \delta -
  \vartheta_T}{\bar{b}} \exp\left( \vartheta_T -
  \frac{1}{\delta}\right) \\[.2cm] \nonumber =
  \frac{\delta - \log v(\delta)}{\bar{b}} \exp\left( \vartheta_0 -
  \frac{1}{\delta}\right) v(\delta) = 2T \left(1 - \frac{1}{\delta} \log v(\delta)
  \right) v(\delta).
\end{gather*}
Then (\ref{k29}) turns into the following
\[
2\left(1 - \frac{1}{\delta} \log v(\delta)
  \right) v(\delta) >1,
\]
which is obviously the case as $1<v(\delta)<\exp(\delta/2)$ for each
$\delta>0$.

\end{document}